\documentclass[11pt]{article}
\usepackage[margin=1in]{geometry}
\usepackage{amsmath,amssymb,amsthm, amsfonts}
\usepackage{enumitem}
\usepackage{fancyhdr}
\usepackage{tikz}
\usepackage[colorlinks=true,linkcolor=blue,citecolor=blue,urlcolor=blue]{hyperref}
\newcommand\shorttitle{No nontrivial uniquely $C_9$-saturated graphs}
\newcommand\authors{Fayzan Khan}
\newtheorem{conjecture}{Conjecture}

\newtheorem{theorem}{Theorem}[section]
\newtheorem{lemma}[theorem]{Lemma}
\newtheorem{proposition}[theorem]{Proposition}
\newtheorem{corollary}[theorem]{Corollary}
\theoremstyle{definition}

\newtheorem{remark}[theorem]{Remark}

\newcommand{\Forb}{\mathrm{Forb}}
\newcommand{\dist}{\mathrm{dist}}

\title{\textbf{There are no nontrivial uniquely $C_9$-saturated graphs}}
\author{Fayzan Khan}
\date{\today}

\begin{document}

\maketitle
\begin{abstract}
A graph $G$ is \emph{uniquely $C_t$-saturated} if $G$ contains no cycle of
length $t$ and, for every edge $e$ of the complement, $G+e$ contains exactly
one cycle of length $t$; it is \emph{nontrivial} if it has at least $t$
vertices. Wenger and West proved that no nontrivial uniquely $C_6$- or
$C_7$-saturated graphs exist, and conjectured the same for every $t\ge 6$; the
case $t=8$ was verified by them but the proof was never published, and the
cases $t\ge 9$ have remained open. We prove the conjecture for $t=9$: there is
no nontrivial uniquely $C_9$-saturated graph. The proof is a case analysis on
the length $L\in\{4,6,8,10,12\}$ of a longest even cycle of length at most
$12$, which such a graph must contain. The cases $L=12$, $L=10$, and $L=4$ are
settled entirely by hand; for the cases $L=6$ and $L=8$, hand
classifications of the components outside the cycle reduce each case to an
explicitly bounded finite family of configurations, which is then eliminated
by a short, fully replayable computer enumeration with known-answer controls
and with completeness certified by zero cap hits.
Along the way we prove, in sharpened form, the $t=8$ instance of a structural
lemma that Wenger and West stated without proof.

\bigskip

\noindent\textbf{Mathematics Subject Classification (2020):} Primary 05C35; Secondary 05C38

\noindent\textbf{Keywords:} uniquely saturated graphs, cycle-saturated graphs, cycle-free graphs, Wenger-West conjecture, forbidden subgraphs
\end{abstract}

\section{Introduction}\label{sec:intro}

For a graph $F$, a graph $G$ is \emph{$F$-saturated} if $F$ is not a subgraph
of $G$ but $F$ is a subgraph of $G+e$ for every edge $e$ of the complement
$\overline G$. Saturation, introduced by Erd\H{o}s, Hajnal and Moon
\cite{EHM}, is classically studied through the minimum number of edges of an
$F$-saturated graph; see the dynamic survey \cite{FFS} for an overview.

Cooper, Lenz, LeSaulnier, Wenger and West \cite{CLLWW} initiated the study of
a much more rigid variant: $G$ is \emph{uniquely $F$-saturated} if adding any
edge of the complement to $G$ completes \emph{exactly one} copy of $F$. When
$F$ has $t$ vertices, every complete graph on fewer than $t$ vertices is
vacuously uniquely $F$-saturated; a uniquely $C_t$-saturated graph is
therefore called \emph{nontrivial} if it has at least $t$ vertices, where
$C_t$ denotes the cycle on $t$ vertices.

For short cycles, nontrivial examples exist and have been classified: the
uniquely $C_3$-saturated graphs are exactly the stars and the Moore graphs of
diameter two, there are exactly ten uniquely $C_4$-saturated graphs
\cite{CLLWW}, and the nontrivial uniquely $C_5$-saturated graphs are exactly
the friendship graphs \cite[Theorem 3.1]{WW}. In sharp contrast, Wenger and
West \cite{WW} proved that there are \emph{no} nontrivial uniquely
$C_6$-saturated graphs \cite[Theorem~4.1]{WW} and no nontrivial uniquely
$C_7$-saturated graphs \cite[Theorem~4.2]{WW}, and made the following
conjecture.

\begin{conjecture}[{Wenger--West \cite[Conjecture~1.2]{WW}}]\label{conj:ww}
For $t\ge 6$ there are no nontrivial uniquely $C_t$-saturated graphs.
\end{conjecture}

They also proved \cite[Theorem~1.1]{WW} that for every $t\ge 6$ there are only
finitely many uniquely $C_t$-saturated graphs; the proof is a nested induction
and yields no explicit bound. Concerning $t=8$ they write, immediately after
the conjecture: ``We have verified Conjecture~1.2 for $t=8$, but the proof is
quite long and does not contain any new ideas beyond those used in the proofs
of Theorems~4.1 and~4.2; thus we do not include the proof here.'' To the best
of our knowledge no proof for $t=8$ has appeared anywhere in the intervening
decade, and no case $t\ge 9$ has been settled in either direction. The
smallest case of Conjecture~\ref{conj:ww} that is open in print is thus $t=8$,
and the smallest case open even informally is $t=9$.

In this paper we settle the case $t=9$.

\begin{theorem}\label{thm:main}
There is no nontrivial uniquely $C_9$-saturated graph. That is,
Conjecture~\ref{conj:ww} holds for $t=9$.
\end{theorem}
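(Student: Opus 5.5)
We argue by contradiction and follow the outline in the abstract. Let $G$ be a nontrivial uniquely $C_9$-saturated graph, so $G$ has at least $9$ vertices. First we collect basic structural facts. $G$ is connected: otherwise an edge $xy$ joining two components lies on no cycle, so $G+xy$ has no $C_9$. Every nonadjacent pair $x,y$ is joined by \emph{exactly one} $x,y$-path of length $8$ in $G$, and $G$ contains no $C_9$. Since $G$ has at least $9$ vertices it is not complete, so some such path exists. We then show that $G$ contains an even cycle of length at most $12$. Take the unique path $P$ of length $8$ for a nonadjacent pair, and consider a vertex or chord near $P$ that gives an alternative route between two vertices of $P$; a shortcut of length $\ell$ replacing a segment of length $m$ would create a second path of length $8$ unless $\ell\ne m$. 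Uniqueness then forces two internally disjoint paths between the same endpoints whose lengths sum to an even number at most $12$.

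Fix $L\in\{4,6,8,10,12\}$ maximal such that $G$ contains a cycle $C$ of length $L$. We split into cases on $L$.

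\emph{Large cases $L=12,10$.} Here $C$ is long compared with $9$. Nonadjacent vertices $u,v$ on $C$ at distance $d$ along $C$ have two arcs of lengths $d$ and $L-d$. Adding $uv$ must create exactly one $C_9$, so exactly one path of length $8$ joins them. We use paths running partly along $C$ and partly through attachments. Any vertex off $C$ with two neighbours on $C$, or any chord of $C$, produces an even cycle longer than $L$ (contradicting maximality) or a $C_9$ (contradicting $C_9$-freeness). So attachments hang off $C$ as trees through single vertices. Choosing $u,v$ on $C$ at a suitable distance, these pendant structures either give no path of length $8$ or give two symmetric ones, one using each arc. This contradiction should be achievable by hand.

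\emph{Case $L=4$.} Every even cycle is a $4$-cycle. Using the known structure of graphs whose only even cycles are short, together with $C_9$-freeness, $G$ becomes tree-like with $C_4$ blocks and odd cycles of length at most $7$, $11,\dots$. Length-$8$ paths between far-apart leaves of such a block-tree come in pairs, because each $C_4$ block offers two routes of the same length $2$ between its opposite vertices. Alternatively no such path exists, since diameter considerations combined with saturation fail. Either way we get a contradiction.

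\emph{Cases $L=6,8$.} These are the main obstacle. Vertices off $C$ may attach to $C$ in several ways without creating longer even cycles or $C_9$. I would classify the components of $G-V(C)$ together with their attachment sets on $C$. Maximality of $L$ and $C_9$-freeness bound the radius of each component and the number and positions of its attachment vertices. Uniqueness of length-$8$ paths bounds multiplicities, since two isomorphic pendant structures at the same vertex immediately give two paths. This reduces the problem to finitely many configurations. I would then eliminate them by exhaustive computer enumeration. The program checks for $C_9$ and counts length-$8$ paths for each non-edge, and I would validate it on the known $C_5$ friendship graphs and on the $t=6,7$ results. The hard part is proving the bounds rigorously enough that the enumeration is complete; the key lemma needed is a sharpened bound on pendant-tree depth of the type Wenger and West used for $t=8$.
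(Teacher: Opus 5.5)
Your outer frame (fix a longest even cycle $C$ of length $L\le 12$, split on $L$, close the long cycles by hand, finish $L=6,8$ by a bounded enumeration) matches the paper's. However, the argument breaks where the real work is.

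\textbf{The structural claim for $L=12,10$ is false.} Maximality of $L$ only excludes even cycles with length in $(L,12]$; for $L=12$ it excludes nothing. The cycle-length test therefore does not exclude chords or two-neighbour attachments. On a $12$-cycle, a vertex adjacent to $c_0,c_3$ closes cycles of lengths $5$ and $11$, and chords at distance $2$ or $6$ close cycles of lengths $3,11$ or $7,7$. On a $10$-cycle, chords at distance $4,5$ and attachments at distance $1,5$ survive the same test.

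Your conclusion that outside vertices hang off $C$ as trees through single vertices is the opposite of the truth. $G$ may be taken $2$-connected (\ref{f:2conn}, which you never establish), so nothing hangs through a cut vertex. For $L=12$ the paper proves that $R$ is independent and every $x\in R$ has exactly two $C$-neighbours, at distance $1$ or $3$ (Lemmas~\ref{lem:12-indep} and~\ref{lem:12-attach}).

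Getting there needs two ingredients your proposal lacks:
\begin{itemize}
\item the absence of $C_{14}$ and $C_{16}$ (\ref{f:c14c16}), which maximality does not give, but which is what forbids, e.g., two distance-$1$ attachments;
\item $H_{k,8-k}$-freeness (\ref{f:pendant}), the pendant-path device that kills most attachment distances and chords.
\end{itemize}
The second ingredient, not maximality plus $C_9$-freeness, is also what bounds how far a component can reach from $C$. A long path hanging off a $2k$-cycle creates no cycle at all. It is excluded because its far end and the antipode of its foot would be joined by two $9$-paths, one along each half (or would close a $9$-cycle). The chords and attachments that survive are then killed by explicit second $9$-paths for chosen nonadjacent pairs (Proposition~\ref{prop:l12}), not by a symmetry heuristic.

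\textbf{The remaining steps are asserted rather than proved.}
\begin{itemize}
\item \emph{Even cycle of length at most $12$.} Your derivation does not go through. Uniqueness only forces a detour of length $\ell$ to differ from the segment of length $m$ it replaces, which says nothing about the parity of $\ell+m$. You also give no reason that a suitable detour exists. This is Lemma~2.10 of Wenger--West (\ref{f:evencycle}) and should be cited.
\item \emph{$L=4$.} The ``two equal routes through a $C_4$ block'' heuristic is not an argument, and it does not address $G[V(C)]=K_4$. There the paper shows every chordal path has length $2$, so any two outside vertices are twins or close a $6$-cycle. When a diagonal $uv$ is missing, it writes $P(u,v)=u\,A\,x\,B\,y\,D\,v$ and eliminates every split $a+b+d=8$ (Proposition~\ref{prop:l4}).
\item \emph{$L=6,8$.} You identify the missing piece yourself, a proof that the residual family is finite and explicit, but supply none of it. Also, pendant-tree depth is the wrong quantity under $2$-connectivity.
\end{itemize}

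For $L=6$, what is actually needed:
\begin{itemize}
\item every $R$-vertex has a $C$-neighbour (Theorem~\ref{thm:6-nodeep});
\item components on $\ge 3$ vertices are confined to an antipodal pair whose chord is present (Lemma~\ref{lem:6-confined});
\item swappable $K_2$'s can be deleted (Lemma~\ref{lem:swapred}).
\end{itemize}
For $L=8$, what is needed:
\begin{itemize}
\item $G[R]$ is $P_4$-free (Lemma~\ref{lem:8-p4});
\item no vertex of $R$ is attached at distance $2$ (Lemma~\ref{lem:no-dist2});
\item the Hamiltonian-path kill lemma (Lemma~\ref{lem:hamkill}), which cuts the chord sets to $6{,}891$ (Proposition~\ref{prop:surviving}).
\end{itemize}
Without such lemmas your enumeration has no completeness guarantee, so the proposal does not yet prove the theorem.
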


\subsection*{Method}

Our starting point is the structural toolkit of \cite{WW}: a nontrivial
uniquely $C_9$-saturated graph $G$ may be assumed $2$-connected, contains no
cycle of length $14$ or $16$, contains no $2k$-cycle with a pendant path of
length $8-k$, has no pair of nonadjacent vertices with the same neighbourhood,
and---the key entry point---contains an even cycle of length at most
$2t-6=12$ \cite[Lemma~2.10]{WW}. We fix a \emph{longest} even cycle $C$ of
length at most $12$, so that its length $L$ lies in $\{4,6,8,10,12\}$ and $G$
contains no even cycle of length strictly between $L$ and $14$. The proof of
Theorem~\ref{thm:main} is a complete analysis of the five cases.

The cases $L=12$, $L=10$ and $L=4$ are closed by hand
(Sections~\ref{sec:l12}--\ref{sec:l4}): pendant-path and cycle-length
bookkeeping forces an explicit short list of candidate graphs, and in each of
them some nonadjacent pair fails to have exactly one path on nine vertices.

The cases $L=6$ and $L=8$ are genuinely harder; they are where, in Wenger and
West's words about growing $t$, ``more cases and details are needed to exclude
the shorter even cycles.'' For $L=6$ (Section~\ref{sec:l6}) we develop a chain of structural
lemmas---a split lemma at common neighbours, a \emph{corner lemma} for
common neighbours with small neighbourhoods, a domination lemma, and a swap
lemma for involutive automorphisms---which prove that every vertex outside
$C$ has a neighbour on $C$, and that every component of $G[R]$ is a
singleton, an edge, or a component confined to a single antipodal pair of
$C$ joined by a chord; the residual configurations form an explicit finite
family which a two-phase enumeration eliminates.
For $L=8$ (Section~\ref{sec:l8}) the central new observations are a
\emph{distance-$2$ attachment theorem} (no vertex outside $C$ has two
neighbours on $C$ at $C$-distance $2$; Lemma~\ref{lem:no-dist2}) and a
\emph{Hamiltonian-path kill lemma} (Lemma~\ref{lem:hamkill}) relating the
chords of $C$ to the possible attachment sets of outside vertices through the
Hamiltonian paths of the graph induced on $V(C)$. These close the chordless
sub-case by hand and cut the $2^{20}$ possible chord sets down to $6{,}891$,
of which only $300$ admit any outside structure at all.

The two residual families are eliminated by two small computer enumerations
(Appendix~\ref{app:enum}), each with its scope declared in advance, each
completing without truncation, and each guarded by known-answer controls run
in the same process before the main computation. The enumerations are the only
two non-hand steps in the paper; everything they use is stated as a lemma and
proved by hand, so that a reader can independently re-implement either
enumeration from the text alone. The two scripts are provided as ancillary
files.

\subsection*{A lemma of Wenger and West, sharpened at $t=9$}

Lemma~2.11 of \cite{WW} asserts, for $t\ge 7$ and without proof, structural
control over the vertices outside a longest even cycle of length $2t-6$. Our
analysis of the case $L=12$ proves the $t=9$ instance in sharpened form: the
vertices outside the cycle form an independent set, each has exactly two
neighbours on the cycle, and the two neighbours are at $C$-distance exactly
$3$---or $1$, a possibility the general statement excludes but which we
eliminate directly (Remark~\ref{rem:ww211}).

\subsection*{Organisation}

Section~\ref{sec:prelim} collects definitions, the facts imported from
\cite{WW}, and the general-purpose lemmas. Sections~\ref{sec:l12},
\ref{sec:l10} and \ref{sec:l4} close the cases $L=12$, $10$ and $4$.
Section~\ref{sec:l6} closes $L=6$ and Section~\ref{sec:l8} closes $L=8$.
Section~\ref{sec:conclusion} assembles the main theorem and discusses the
status of $t=8$ and $t=10$. Appendix~\ref{app:enum} documents the two
enumerations and their controls.

\section{Preliminaries}\label{sec:prelim}

All graphs are finite and simple. A \emph{$k$-path} is a path on $k$ vertices
(hence with $k-1$ edges); a \emph{$k$-cycle} is a cycle on $k$ vertices. The
\emph{length} of a path or cycle is its number of edges. For $k\ge 2$ and
$\ell\ge 0$ let $H_{k,\ell}$ denote the graph consisting of a $2k$-cycle
together with a pendant path of length $\ell$ attached at one of its vertices.
Since $H_{k,\ell}\subseteq H_{k,\ell'}$ for $\ell\le\ell'$, forbidding
$H_{k,\ell}$ as a subgraph forbids every $2k$-cycle with a pendant path of
length at least $\ell$.

Throughout, $G$ denotes a putative nontrivial uniquely $C_9$-saturated graph;
thus $|V(G)|\ge 9$, $G$ has no $9$-cycle, and $G+e$ has exactly one $9$-cycle
for every edge $e$ of $\overline G$. We will use the following facts from
\cite{WW}, specialised to $t=9$.

\begin{enumerate}[label=(F\arabic*),leftmargin=3.2em]
\item\label{f:2conn} \emph{($2$-connectivity.)} We may assume $G$ is
$2$-connected \cite[Corollary~2.5]{WW}.
\item\label{f:c14c16} $G$ contains no $C_{14}$ and no $C_{16}$
\cite[Lemma~2.7]{WW}.
\item\label{f:evencycle} $G$ contains an even cycle of length at most $12$
\cite[Lemma~2.10]{WW}.
\item\label{f:pendant} For $2\le k\le 7$, $G$ contains no $H_{k,8-k}$
\cite[Lemma~2.6]{WW}. Concretely, $G$ contains no $H_{2,6}$, $H_{3,5}$,
$H_{4,4}$, $H_{5,3}$, $H_{6,2}$, or $H_{7,1}$; by monotonicity, no $2k$-cycle
with a pendant path of length $\ge 8-k$. Figure~\ref{fig:forbidden_c9}
depicts all six of these forbidden subgraphs.

\begin{figure}[htbp]
  \centering
  \tikzset{
    vertex/.style={circle, draw, fill=black, inner sep=0pt, minimum size=4pt},
    edge/.style={thick}
  }


  \begin{minipage}[b]{0.30\textwidth}
    \centering
    \begin{tikzpicture}[scale=0.55]
      \foreach \i in {1,...,4} {
        \node[vertex] (v\i) at ({(\i-1)*90}:1.0) {};
      }
      \draw[edge] (v1) -- (v2) -- (v3) -- (v4) -- (v1);

      \node[vertex] (p1) at (1.8, 0) {};
      \node[vertex] (p2) at (2.6, 0) {};
      \node[vertex] (p3) at (3.4, 0) {};
      \node[vertex] (p4) at (4.2, 0) {};
      \node[vertex] (p5) at (5.0, 0) {};
      \node[vertex] (p6) at (5.8, 0) {};

      \draw[edge] (v1) -- (p1) -- (p2) -- (p3) -- (p4) -- (p5) -- (p6);
    \end{tikzpicture}
    \par\vspace{6pt}
    (a) $H_{2,6}$
  \end{minipage}\hfill
  \begin{minipage}[b]{0.30\textwidth}
    \centering
    \begin{tikzpicture}[scale=0.55]
      \foreach \i in {1,...,6} {
        \node[vertex] (v\i) at ({(\i-1)*60}:1.0) {};
      }
      \draw[edge] (v1) -- (v2) -- (v3) -- (v4) -- (v5) -- (v6) -- (v1);

      \node[vertex] (p1) at (1.8, 0) {};
      \node[vertex] (p2) at (2.6, 0) {};
      \node[vertex] (p3) at (3.4, 0) {};
      \node[vertex] (p4) at (4.2, 0) {};
      \node[vertex] (p5) at (5.0, 0) {};

      \draw[edge] (v1) -- (p1) -- (p2) -- (p3) -- (p4) -- (p5);
    \end{tikzpicture}
    \par\vspace{6pt}
    (b) $H_{3,5}$
  \end{minipage}\hfill
  \begin{minipage}[b]{0.30\textwidth}
    \centering
    \begin{tikzpicture}[scale=0.55]
      \foreach \i in {1,...,8} {
        \node[vertex] (u\i) at ({(\i-1)*45}:1.0) {};
      }
      \draw[edge] (u1) -- (u2) -- (u3) -- (u4) -- (u5) -- (u6) -- (u7) -- (u8) -- (u1);

      \node[vertex] (q1) at (1.8, 0) {};
      \node[vertex] (q2) at (2.6, 0) {};
      \node[vertex] (q3) at (3.4, 0) {};
      \node[vertex] (q4) at (4.2, 0) {};

      \draw[edge] (u1) -- (q1) -- (q2) -- (q3) -- (q4);
    \end{tikzpicture}
    \par\vspace{6pt}
    (c) $H_{4,4}$
  \end{minipage}

  \vspace{14pt}


  \begin{minipage}[b]{0.30\textwidth}
    \centering
    \begin{tikzpicture}[scale=0.55]
      \foreach \i in {1,...,10} {
        \node[vertex] (w\i) at ({(\i-1)*36}:1.0) {};
      }
      \draw[edge] (w1) -- (w2) -- (w3) -- (w4) -- (w5) -- (w6) -- (w7) -- (w8) -- (w9) -- (w10) -- (w1);

      \node[vertex] (r1) at (1.8, 0) {};
      \node[vertex] (r2) at (2.6, 0) {};
      \node[vertex] (r3) at (3.4, 0) {};

      \draw[edge] (w1) -- (r1) -- (r2) -- (r3);
    \end{tikzpicture}
    \par\vspace{6pt}
    (d) $H_{5,3}$
  \end{minipage}\hfill
  \begin{minipage}[b]{0.30\textwidth}
    \centering
    \begin{tikzpicture}[scale=0.55]
      \foreach \i in {1,...,12} {
        \node[vertex] (s\i) at ({(\i-1)*30}:1.0) {};
      }
      \draw[edge] (s1) -- (s2) -- (s3) -- (s4) -- (s5) -- (s6) -- (s7) -- (s8) -- (s9) -- (s10) -- (s11) -- (s12) -- (s1);

      \node[vertex] (t1) at (1.8, 0) {};
      \node[vertex] (t2) at (2.6, 0) {};

      \draw[edge] (s1) -- (t1) -- (t2);
    \end{tikzpicture}
    \par\vspace{6pt}
    (e) $H_{6,2}$
  \end{minipage}\hfill
  \begin{minipage}[b]{0.30\textwidth}
    \centering
    \begin{tikzpicture}[scale=0.55]
      \foreach \i in {1,...,14} {
        \node[vertex] (z\i) at ({(\i-1)*360/14}:1.0) {};
      }
      \draw[edge] (z1) -- (z2) -- (z3) -- (z4) -- (z5) -- (z6) -- (z7) -- (z8) -- (z9) -- (z10) -- (z11) -- (z12) -- (z13) -- (z14) -- (z1);

      \node[vertex] (y1) at (1.8, 0) {};

      \draw[edge] (z1) -- (y1);
    \end{tikzpicture}
    \par\vspace{6pt}
    (f) $H_{7,1}$
  \end{minipage}

  \caption{The six forbidden subgraphs $H_{k,8-k}$ for $2 \le k \le 7$ used in (F4): a $2k$-cycle with a pendant path of length $8-k$.}
  \label{fig:forbidden_c9}
\end{figure}

\item\label{f:twins} $G$ has no \emph{twins}: no two nonadjacent vertices with
equal neighbourhoods \cite[Lemma~2.9]{WW}.
\item\label{f:obs} \emph{(Unique-saturation reformulation
\cite[Observation~2.1]{WW}.)} $G$ is uniquely $C_9$-saturated if and only if
every two nonadjacent vertices of $G$ are the endpoints of exactly one
$9$-path, and no two adjacent vertices are the endpoints of a $9$-path
(equivalently, $G$ has no $9$-cycle).
\end{enumerate}

For nonadjacent vertices $u,v$ we write $P(u,v)$ for the unique $9$-path with
endpoints $u$ and $v$ guaranteed by \ref{f:obs}. Exhibiting a second
$9$-path between a nonadjacent pair, a $9$-cycle, a $C_{14}$ or $C_{16}$, or a
forbidden $H_{k,8-k}$, is how every configuration in this paper is
eliminated; we say the configuration is \emph{killed}. Killing is monotone:
all our killers are subgraphs together with a non-adjacency, and adding
further vertices or edges \emph{not} joining the named nonadjacent pair
preserves them.

\subsection*{The longest even cycle and its surroundings}

By \ref{f:evencycle} fix a longest even cycle
$C=[c_0,c_1,\dots,c_{L-1}]$ of length $L\le 12$, indices modulo $L$, so
$L\in\{4,6,8,10,12\}$, and set $R=V(G)\setminus V(C)$. By maximality $G$ has
no even cycle of length $L'$ with $L<L'\le 12$; combining this with
\ref{f:c14c16} and $C_9$-freeness, the set of forbidden cycle lengths is
\[
\Forb(L)\;=\;\{9,14,16\}\;\cup\;\{\text{even } L' : L<L'\le 12\}.
\]

An \emph{$m$-arc} of $C$ is a subpath of $C$ with $m$ edges. The
\emph{$C$-distance} of $c_i,c_j$ is $\min(|i-j|,\,L-|i-j|)$. A \emph{chord}
$c_ic_j$ has $C$-distance $d=\dist(c_i,c_j)\ge 2$. A vertex $x\in R$ with two
$C$-neighbours $c_i,c_j$ is \emph{attached at distance} $d=\dist(c_i,c_j)$.
A \emph{chordal path} is a path of length $\ge 2$ with both endpoints on $C$
and all interior vertices in $R$. Since $G$ is $2$-connected, every vertex of
$R$ lies on a chordal path (fan lemma). We write $N_C(x)=N(x)\cap V(C)$.

Two elementary counting facts are used throughout:
\begin{enumerate}[label=(E\arabic*),leftmargin=3.2em]
\item\label{e:chord} a chord at distance $d$ creates cycles of lengths $d+1$
and $L-d+1$;
\item\label{e:attach} a vertex attached at distance $d$ creates cycles of
lengths $d+2$ and $L-d+2$; more generally a chordal path of length $\ell$
between $C$-vertices at distance $s$ creates cycles of lengths $\ell+s$ and
$\ell+L-s$.
\end{enumerate}
Moreover, the cycle through a chord (or attachment, or chordal path) that uses
the $d$-arc of $C$ has the complementary $(L-d)$-arc as a pendant path of
length $L-d$ hanging at one of its vertices, and vice versa; combined with
\ref{f:pendant} this \emph{pendant-path device} is our main structural tool.

\subsection*{Two general lemmas}

The following two lemmas are used in the cases $L\le 8$; the first needs only
that $10,12,14,16\in\Forb(L)$, the second is completely general.

\begin{lemma}[Even-path lemma]\label{lem:evenpath}
Suppose $L\le 8$. Let $u,v$ be nonadjacent. Then every $u$--$v$ path of even
length $\ell\le 8$ shares an interior vertex with $P(u,v)$. In particular
every common neighbour of $u$ and $v$ is an interior vertex of $P(u,v)$.
\end{lemma}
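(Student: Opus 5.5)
Suppose, for contradiction, that $Q$ is a $u$--$v$ path of even length $\ell\le 8$ whose interior is disjoint from the interior of $P=P(u,v)$. Since $u$ and $v$ are nonadjacent, $P$ exists by \ref{f:obs}. It has $9$ vertices, hence length $8$. The two paths share their endpoints and nothing else, so $P\cup Q$ is a cycle of length $8+\ell$.

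Next I show $Q\neq P$. Suppose $\ell=8$ and $Q$ and $P$ have disjoint interiors. Both have length $8\ge 2$, so each has nonempty interior. Two paths with nonempty interiors that are disjoint there cannot coincide. Hence $Q\ne P$ in every case, and the cycle $P\cup Q$ is genuine.

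The length $\ell$ is even, at least $2$ (because $uv\notin E(G)$), and at most $8$. So $8+\ell\in\{10,12,14,16\}$. Since $L\le 8$, each of these lengths lies in $\Forb(L)$:
\begin{itemize}
\item $10$ and $12$ are forbidden by the maximality of $C$;
\item $14$ and $16$ are forbidden by \ref{f:c14c16}.
\end{itemize}
This contradicts the existence of the cycle $P\cup Q$. Therefore $Q$ shares an interior vertex with $P$.

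For the second assertion, let $w$ be a common neighbour of $u$ and $v$. Then $uwv$ is a $u$--$v$ path of length $2$, so by the first part its only interior vertex $w$ lies in the interior of $P(u,v)$.

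There is no real obstacle here. The only point needing care is that "sharing an interior vertex" concerns interior vertices only, and the endpoints are already common to both paths. This is exactly why the union of two internally disjoint $u$--$v$ paths is a cycle, which makes the argument go through.
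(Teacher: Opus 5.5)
Your proposal is correct and is the paper's argument: an internally disjoint even $u$--$v$ path of length $\ell\le 8$ would close with $P(u,v)$ into a cycle of length $8+\ell\in\{10,12,14,16\}\subseteq\Forb(L)$, and the common-neighbour statement is the case $\ell=2$. Your extra checks ($\ell\ge 2$, $Q\neq P$, endpoints not interior) are details the paper leaves implicit.
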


\begin{proof}
Otherwise the path and $P(u,v)$ (of length $8$) are internally disjoint and
together form a cycle of length $8+\ell\in\{10,12,14,16\}\subseteq\Forb(L)$.
\end{proof}

\begin{lemma}[Swap lemma]\label{lem:swap}
Let $\sigma$ be an automorphism of $G$ and let $u,v$ be nonadjacent vertices
fixed by $\sigma$. Then $\sigma$ fixes every vertex of $P(u,v)$.
Consequently, if $w_1w_2$ is a component of $G[R]$ isomorphic to $K_2$ with
$N_C(w_1)=N_C(w_2)$ and $\deg(w_i)=|N_C(w_i)|+1$, then no $9$-path between
nonadjacent vertices outside $\{w_1,w_2\}$ passes through $w_1$ or $w_2$.
\end{lemma}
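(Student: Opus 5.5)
The first assertion follows from uniqueness of $P(u,v)$ via \ref{f:obs}. An automorphism maps $9$-paths to $9$-paths and preserves endpoints when it fixes them. Since $\sigma$ fixes $u$ and $v$ and preserves adjacency, $u$ and $v$ remain nonadjacent. So $\sigma(P(u,v))$ is a $9$-path with endpoints $\sigma(u)=u$ and $\sigma(v)=v$, and uniqueness gives $\sigma(P(u,v))=P(u,v)$ as a subgraph. It remains to rule out $\sigma$ reversing the path. Write $P(u,v)=p_0p_1\cdots p_8$ with $p_0=u$, $p_8=v$. Because $\sigma$ maps the path onto itself, it induces an automorphism of the path graph, which is either the identity or the reversal $p_i\mapsto p_{8-i}$. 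The reversal sends $u$ to $v\neq u$, contradicting $\sigma(u)=u$. Hence $\sigma(p_i)=p_i$ for all $i$.

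For the consequence, I will take $\sigma$ to be the transposition of $w_1$ and $w_2$, fixing every other vertex, and check that it is an automorphism.

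\begin{itemize}
\item The edge $w_1w_2$ is preserved.
\item Since $w_1w_2$ is a component of $G[R]$, neither $w_i$ has a neighbour in $R$ other than the other $w_j$.
\item The hypothesis $\deg(w_i)=|N_C(w_i)|+1$ confirms that the neighbourhood of $w_i$ is exactly $N_C(w_i)\cup\{w_{3-i}\}$. This degree condition is really just a restatement that $w_i$ has no other neighbours, which is automatic from the component condition, but I will use it as stated.
\item Because $N_C(w_1)=N_C(w_2)$, swapping $w_1$ and $w_2$ maps edges $w_1c$ to edges $w_2c$ and vice versa, for every $c\in V(C)$.
\item All other edges are untouched.
\end{itemize}

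So $\sigma$ is an automorphism.

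Now let $u,v$ be nonadjacent with $u,v\notin\{w_1,w_2\}$. Then $\sigma$ fixes both, so by the first part $\sigma$ fixes every vertex of $P(u,v)$. If $w_1$ lay on $P(u,v)$, then $\sigma(w_1)=w_2\neq w_1$ would also have to be fixed, a contradiction; the same argument applies to $w_2$. Hence neither $w_1$ nor $w_2$ lies on the unique $9$-path between such a pair.

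The only delicate point is the orientation issue in the first part, namely that a setwise-fixed path could a priori be reversed. This is dispatched by the endpoint argument above: the reversal moves $u$, while $\sigma$ fixes $u$. Everything else is routine verification.
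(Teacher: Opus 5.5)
Your proof is correct and follows essentially the same argument as the paper. Uniqueness forces $\sigma(P(u,v))=P(u,v)$, the fixed endpoints rule out reversal, and the transposition of $w_1,w_2$ is an automorphism that would move a vertex of any $P(u,v)$ passing through $w_1$ or $w_2$. Your explicit check that the transposition is an automorphism fills in a detail the paper leaves implicit, as does your correct remark that the degree hypothesis already follows from the component condition.
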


\begin{proof}
$\sigma(P(u,v))$ is a $9$-path with endpoints $u,v$, hence equals $P(u,v)$ by
uniqueness; a path mapped to itself with its endpoints fixed is fixed
pointwise. For the consequence, the transposition of $w_1$ and $w_2$ is an
automorphism of $G$; if some $P(u,v)$ with $u,v\notin\{w_1,w_2\}$ passed
through $w_1$, its image would be a second $9$-path from $u$ to $v$.
\end{proof}

We call a $K_2$-component as in Lemma~\ref{lem:swap} \emph{swappable}.

\section{Case \texorpdfstring{$L=12$}{L=12}}\label{sec:l12}

Here $\Forb(12)=\{9,14,16\}$, and by \ref{f:pendant} the relevant forbidden
subgraphs are $H_{2,6}$, $H_{3,5}$, $H_{4,4}$ and $H_{6,2}$.

\begin{lemma}\label{lem:12-indep}
$R$ is an independent set.
\end{lemma}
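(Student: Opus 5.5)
The plan is to use the forbidden subgraph $H_{6,2}$ from \ref{f:pendant} directly on $C$ itself. For $L=12$ the cycle $C$ is a $12$-cycle. So any path of length $2$ that leaves $C$ at one vertex and then continues through two vertices of $R$ would be a pendant path of length $2$ on a $12$-cycle. That configuration is exactly $H_{6,2}$, which \ref{f:pendant} forbids.

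Concretely, suppose for contradiction that $R$ is not independent, and let $K$ be a component of $G[R]$ with at least two vertices. The argument has three steps.

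\begin{enumerate}[label=(\roman*)]
\item Since $G$ is connected (indeed $2$-connected by \ref{f:2conn}) and $V(C)\neq\emptyset$, some vertex $w\in V(K)$ has a neighbour $c\in V(C)$.
\item Because $K$ is connected with $|V(K)|\ge 2$, the vertex $w$ has a neighbour $w'\in V(K)\subseteq R$.
\item The vertices $c,w,w'$ are distinct, and neither $w$ nor $w'$ lies on $C$. Hence $C$ together with the path $c\,w\,w'$ is a $12$-cycle with a pendant path of length $2$ attached at $c$, that is, a copy of $H_{6,2}$. This contradicts \ref{f:pendant}, so the configuration is killed.
\end{enumerate}

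There is essentially no obstacle here. The only points to check are that the pendant path is genuinely disjoint from $C$ apart from its attachment vertex $c$, which holds because $w,w'\in R$, and that some vertex of $K$ has a $C$-neighbour. The latter holds because a component of $G[R]$ with no $C$-neighbour would be a connected component of $G$ disjoint from $C$, contradicting connectivity.
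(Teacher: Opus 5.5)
Your proof is correct and is essentially the paper's argument: both exhibit a path of length at least $2$ leaving the $12$-cycle $C$ into $R$, which gives an $H_{6,2}$. The only difference is in how that path is found. The paper starts from an edge $uv$ of $G[R]$ and uses $2$-connectivity to take a shortest path from $v$ to $C$ in $G-u$. You instead start from a vertex of the component that already has a $C$-neighbour, so plain connectivity suffices.
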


\begin{proof}
Suppose $uv$ is an edge with $u,v\in R$. Since $G-u$ is connected, a shortest
path from $v$ to $C$ in $G-u$, followed by the edge $vu$, is a pendant path
of length $\ge 2$ hanging at a vertex of the $12$-cycle $C$: an $H_{6,2}$.
\end{proof}

\begin{lemma}\label{lem:12-attach}
Every $x\in R$ has exactly two neighbours, both on $C$, at $C$-distance $1$
or $3$.
\end{lemma}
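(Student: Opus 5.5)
Since $R$ is independent by Lemma~\ref{lem:12-indep}, every neighbour of $x\in R$ lies on $C$. By \ref{f:2conn}, $x$ has at least two neighbours. So the plan has two steps: first show that any two $C$-neighbours of $x$ are at $C$-distance $1$ or $3$, then show by a counting argument that three such neighbours cannot coexist.

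\emph{Step 1: excluding each distance.} Let $c_i,c_j\in N_C(x)$ with $d=\dist(c_i,c_j)\in\{2,\dots,6\}$. By \ref{e:attach}, $x$ together with the $d$-arc forms a $(d+2)$-cycle $D$, and $x$ with the complementary arc forms a $(14-d)$-cycle. By the pendant-path device, the complementary $(12-d)$-arc with its final vertex removed hangs from $D$ as a pendant path of length $11-d$. The cases are as follows.
\begin{itemize}
\item $d=2$: $D$ is a $4$-cycle with a pendant path of length $9\ge 6$, giving $H_{2,6}$.
\item $d=4$: $D$ is a $6$-cycle with a pendant path of length $7\ge 5$, giving $H_{3,5}$.
\item $d=6$: $D$ is an $8$-cycle with a pendant path of length $5\ge 4$, giving $H_{4,4}$.
\item $d=5$: the complementary cycle has length $14-5=9$, which is forbidden since $9\in\Forb(12)$.
\end{itemize}
Each of these is killed, so only $d\in\{1,3\}$ survives. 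These give cycle lengths $\{3,13\}$ and $\{5,11\}$ respectively, none of which is forbidden and none of which yields a forbidden $H_{k,8-k}$.

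\emph{Step 2: exactly two neighbours.} Suppose $x$ has three $C$-neighbours. Going around $C$ they cut it into three arcs with gap lengths $a,b,c\ge1$ and $a+b+c=12$. Each gap is the index difference between two neighbours, and $C$-distance is $\min(g,12-g)$. By Step 1, each gap must therefore lie in $\{1,3,9,11\}$.

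A gap of $9$ or $11$ leaves at most $3$ for the other two gaps together. The only splits are $9+1+2$ and $9+2+1$, each with a forbidden $2$, or $11+1+0$, which is invalid. So all three gaps lie in $\{1,3\}$, and then $a+b+c\le 9<12$, a contradiction.

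Hence $|N(x)|=2$. There is no real obstacle here; the only care needed is in Step 1, in correctly counting the pendant-path length as $11-d$ (not $12-d$) and checking it still meets the thresholds of \ref{f:pendant}.
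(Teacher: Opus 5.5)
Your proposal is correct and follows the paper's proof essentially verbatim: the same exclusions of $d=2,4,6$ via $H_{2,6}$, $H_{3,5}$, $H_{4,4}$ with pendant lengths $9,7,5$, and of $d=5$ via a $9$-cycle. The only difference is cosmetic. You rule out three neighbours by the gap-sum argument (gaps in $\{1,3,9,11\}$ summing to $12$), whereas the paper intersects the candidate position sets for a third neighbour; both are valid.
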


\begin{proof}
By Lemma~\ref{lem:12-indep} and $2$-connectivity, $x$ has at least two
neighbours, all on $C$. Two neighbours at distance $d$ give cycles of lengths
$d+2$ and $14-d$ by \ref{e:attach}, and a pendant path by the pendant-path
device. If $d=2$: the $4$-cycle $x c_0c_1c_2$ has the pendant path
$c_2c_3\cdots c_{11}$ of length $9\ge 6$: $H_{2,6}$. If $d=4$: a $6$-cycle
with a pendant path of length $7\ge 5$: $H_{3,5}$. If $d=6$: an $8$-cycle
with a pendant path of length $5\ge 4$: $H_{4,4}$. If $d=5$: $14-5=9$, a
$9$-cycle. So $d\in\{1,3\}$. Three neighbours would need pairwise distances
in $\{1,3\}$; on a $12$-cycle no triple of positions achieves this: starting
from a pair at distance $1$, say positions $\{0,1\}$, a third position must
lie in $\{1,11,3,9\}$ (distance $1$ or $3$ from $0$) and in $\{0,2,4,10\}$
(from $1$), which are disjoint; starting from $\{0,3\}$, the sets
$\{1,11,3,9\}$ and $\{2,4,6,0\}$ are disjoint. Hence exactly two neighbours.
\end{proof}

\begin{lemma}\label{lem:12-mult}
At most one vertex of $R$ is attached at distance $1$, and the vertices
attached at distance $3$ occupy pairwise distinct position pairs.
\end{lemma}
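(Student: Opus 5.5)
Both claims follow by exhibiting either a pair of twins or a $14$-cycle; by Lemmas~\ref{lem:12-indep} and~\ref{lem:12-attach} we know the exact neighbourhood of every vertex of $R$, and that is all we need.

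\emph{Distance-$3$ vertices.} Suppose $x,y\in R$ are distinct and both attached at distance $3$ to the same position pair $\{c_i,c_{i+3}\}$. By Lemma~\ref{lem:12-indep}, $x$ and $y$ are nonadjacent. By Lemma~\ref{lem:12-attach}, $N(x)=N(y)=\{c_i,c_{i+3}\}$. So $x,y$ are twins, contradicting \ref{f:twins}.

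\emph{Distance-$1$ vertices.} Suppose $x\ne y$ in $R$ are both attached at distance $1$, say $N(x)=\{c_i,c_{i+1}\}$ and $N(y)=\{c_j,c_{j+1}\}$.
\begin{enumerate}[label=(\roman*)]
\item If the two pairs coincide, then $x,y$ are nonadjacent (Lemma~\ref{lem:12-indep}) with equal neighbourhoods. They are twins, again contradicting \ref{f:twins}.
\item Otherwise $c_ic_{i+1}$ and $c_jc_{j+1}$ are distinct edges of $C$; they may share an endpoint. In $C$, replace the edge $c_ic_{i+1}$ by the path $c_i\,x\,c_{i+1}$ and the edge $c_jc_{j+1}$ by the path $c_j\,y\,c_{j+1}$.
\end{enumerate}
In case (ii), the result is still a cycle because the two replaced edges are distinct and $x,y\notin V(C)$ are distinct new vertices. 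This holds even when the edges are consecutive: for example $c_0\,x\,c_1\,y\,c_2\,c_3\cdots c_{11}\,c_0$. The cycle has $12+2=14$ vertices, contradicting \ref{f:c14c16}. Hence at most one vertex of $R$ is attached at distance $1$.

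I expect no real obstacle. The one point to check is that the double detour in case (ii) is a genuine cycle when the two edges of $C$ share a vertex, and the explicit cycle above settles this.
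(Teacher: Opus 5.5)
Your proof is correct and follows the paper's own argument: two distance-$3$ vertices on the same pair are twins, and two distance-$1$ vertices on different pairs give a $14$-cycle by detouring $C$ through both of them, including when the two edges are consecutive. You also explicitly handle two distance-$1$ vertices on the same pair via \ref{f:twins}, a case the paper's proof (which assumes $a\ne 0$) leaves implicit.
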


\begin{proof}
If $x$ is attached to $(c_0,c_1)$ and $y$ to $(c_a,c_{a+1})$ with $a\ne 0$,
then $x\,c_1\cdots c_a\,y\,c_{a+1}\cdots c_0\,x$ is a cycle of length
$2+(a-1)+2+(11-a)=14$ (for $a=1$, the cycle $x\,c_1\,y\,c_2\cdots c_0\,x$
also has length $14$), contradicting \ref{f:c14c16}. Two distance-$3$
vertices on the same pair are twins, contradicting \ref{f:twins}.
\end{proof}

We write $A\subseteq\mathbb Z_{12}$ for the set of positions $a$ such that
some $x_a\in R$ is attached to $(c_a,c_{a+3})$, and $z$ for the distance-$1$
vertex (if any), attached to $(c_b,c_{b+1})$.

\begin{lemma}\label{lem:12-chords}
Every chord of $C$ has distance $2$ or $6$.
\end{lemma}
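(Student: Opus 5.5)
We need to show that every chord of the $12$-cycle $C$ has $C$-distance $2$ or $6$. By \ref{e:chord}, a chord $c_0c_d$ with $2\le d\le 6$ creates a $(d+1)$-cycle $c_0c_1\cdots c_dc_0$ and an $(13-d)$-cycle $c_0c_dc_{d+1}\cdots c_{11}c_0$. We go through $d\in\{3,4,5\}$ and kill each value with \ref{f:pendant} or $\Forb(12)=\{9,14,16\}$. We use only the chord and $C$ itself; $R$ plays no role.

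For $d=4$ the $13-d$ cycle is a $9$-cycle, which is forbidden outright. For $d=3$ the short cycle $c_0c_1c_2c_3c_0$ is a $4$-cycle. The pendant-path device says the complementary arc $c_3c_4\cdots c_{11}c_0$ hangs off it, but that arc returns to $c_0$ and so is not a pendant path. We therefore take the path $c_3c_4\cdots c_{11}$ instead, of length $8\ge 6$, which meets the $4$-cycle only at $c_3$. This gives an $H_{2,6}$.

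For $d=5$ the short cycle $c_0c_1\cdots c_5c_0$ is a $6$-cycle. The path $c_5c_6\cdots c_{11}$ has length $6\ge 5$ and meets the $6$-cycle only at $c_5$, so it forms an $H_{3,5}$. In each case, dropping the last arc edge back to $c_0$ leaves the arc internally disjoint from the small cycle, so the path is genuinely pendant.

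That exhausts $d\in\{3,4,5\}$, so every chord has distance $2$ or $6$. The only point needing care is this pendant-path bookkeeping, namely that the truncated arc attaches to the small cycle at a single vertex. For $d=2$ and $d=6$ the same device yields only $H_{\cdot,\cdot}$ configurations that are permitted, or cycles of lengths $3,11$ and $7,7$, none of which lie in $\Forb(12)$. This is why exactly these two distances survive.
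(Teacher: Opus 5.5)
Your proof is correct and follows the paper's argument exactly: $d=4$ gives a $9$-cycle, $d=3$ gives a $4$-cycle with a pendant path of length $8$ (an $H_{2,6}$), and $d=5$ gives a $6$-cycle with a pendant path of length $6$ (an $H_{3,5}$). Your explicit truncation of the complementary arc to make it genuinely pendant is the bookkeeping behind the paper's lengths $8$ and $6$, and your closing remark about $d\in\{2,6\}$ is harmless but not needed for the lemma.
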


\begin{proof}
By \ref{e:chord}: $d=3$ gives a $4$-cycle whose complementary arc is a
pendant path of length $8\ge 6$ ($H_{2,6}$); $d=4$ gives $5+9$, a $9$-cycle;
$d=5$ gives a $6$-cycle with a pendant path of length $6\ge 5$ ($H_{3,5}$).
\end{proof}

\begin{lemma}\label{lem:12-nochordA}
If $A\neq\emptyset$ then $C$ has no chord.
\end{lemma}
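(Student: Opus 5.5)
The plan is a direct case analysis. We assume $x=x_a\in R$ is attached to $(c_a,c_{a+3})$ and that $C$ has a chord, and we kill the configuration. By Lemma~\ref{lem:12-chords} the chord is $c_ic_{i+2}$ or $c_ic_{i+6}$. By rotation and reflection we normalise $a=0$, so $x\sim c_0,c_3$. The reflection $c_j\mapsto c_{3-j}$ fixes $\{c_0,c_3\}$, so only about half of the chord positions $i$ need treatment. The subgraph $H=C\cup\{x\}\cup\{\text{chord}\}$ is small. Since $x$ joins $c_0,c_3$ by a path of length $2$ against an arc of length $3$, $H$ is not bipartite, so odd cycles (in particular $9$-cycles) become available. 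The tools are those already used: a forbidden cycle length $9,14,16$; a forbidden $H_{k,8-k}$ (now also $H_{5,3}$ and $H_{4,4}$, which are available from \ref{f:pendant}); or, failing that, a second $9$-path between a nonadjacent pair, using \ref{f:obs}.

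\textbf{Step 1: distance-$6$ chords $c_ic_{i+6}$.} If the chord separates $c_0$ from $c_3$ (that is, $i\in\{1,2\}$ up to the reflection), I would list the two cycles through $x$ and the chord that cross: they have lengths $2+1+(i+3)+i$ and $12-2i$. For each one I would record which $C$-vertices are left over and form a pendant path. For example, the $8$-cycle obtained for $i=2$ leaves an arc hanging that should give an $H_{4,4}$. If the chord does not separate $c_0,c_3$, it cuts $C$ into two $7$-cycles, one of which contains both $c_0,c_3$. Rerouting that $7$-cycle through $x$ gives a $6$-cycle, and the other $7$-cycle supplies a pendant path of length $\ge5$ from a chord endpoint, giving an $H_{3,5}$. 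I expect this subcase to close quickly by the pendant-path device.

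\textbf{Step 2: distance-$2$ chords $c_ic_{i+2}$.} If the chord lies inside the $3$-arc or straddles an endpoint of it (e.g.\ $c_0c_2$ or $c_1c_3$), then $x$, the chord and one arc edge form a $4$-cycle such as $c_0c_2c_3x$. The remaining arc $c_4\cdots c_{11}$, of length $8$, hangs from it, giving an $H_{2,6}$. If instead the chord lies in the long $9$-arc, the cycle $x\,c_3\cdots c_0\,x$ of length $11$ is shortened by the chord to a $10$-cycle. I would then look for a pendant path of length $3$ to obtain $H_{5,3}$. The skipped vertex $c_{i+1}$ and the path $c_1c_2$ each give only length $1$ or $2$, so the direct device seems to fall short here.

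\textbf{Main obstacle.} I expect the hard case to be this long-arc distance-$2$ chord. My first attempt would be \ref{f:obs} applied to the nonadjacent pair $(x,c_{i+1})$, or $(x,c_j)$ for suitable $j$. In $H$ I would write down two distinct $9$-paths between the pair: one using the chord to bypass $c_{i+1}$'s neighbourhood and one using the original arc, with the $c_1c_2$ detour absorbing the length difference. Alternatively, I would exhibit a $9$-cycle using both shortcuts, since $12-1-1=10$, and then swap the $3$-arc for the $2$-path at a suitable place. I would check the positions $i=3,\dots,10$ one by one, modulo the reflection, so only about four positions need treatment. If some position resists, I would use Lemma~\ref{lem:12-attach} ($x$ has degree exactly $2$) to pin down the unique $9$-path $P(x,c_{i+1})$ and exhibit a competitor.
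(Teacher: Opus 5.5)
Your proposal leaves its central case unproved. For the chords $c_kc_{k+2}$ with $3\le k\le 10$ you rightly call this the main obstacle and rightly note that the pendant-path device cannot reach it. But you give no argument there, only candidate strategies, and two of them cannot work. First, there is no $9$-cycle to exhibit: the cycles of $C\cup\{x\}\cup\{c_kc_{k+2}\}$ have lengths $3,5,10,11,12$ only, and the cycle using both shortcuts is the $10$-cycle you already found. Second, the pair $(x,c_{k+1})$ fails in general. In this subgraph it has no $9$-path when $k=3$ and exactly one when $k=4$ or $k=6$, and neither is a kill, since an absent path could be supplied by further vertices.

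What is missing is an explicit second $9$-path valid for every such $k$. The paper takes the pair $(c_1,c_9)$, which is nonadjacent because its $C$-distance is $4$ (Lemma~\ref{lem:12-chords}). The $8$-arc $c_1c_2\cdots c_9$ is one $9$-path. The walk $c_1\,c_0\,x\,c_3c_4\cdots c_9$ has nine edges and becomes a second $9$-path once the chord skips $c_{k+1}$. This covers $3\le k\le 7$, and $k=8,9,10$ follow by the reflection $i\mapsto 3-i$. Your own $(x,c_j)$ idea can also be completed. For $3\le k\le 6$ the nonadjacent pair $(x,c_8)$ has the two $9$-paths $x\,c_3c_2c_1c_0c_{11}c_{10}c_9c_8$ and $x\,c_0c_1\cdots c_8$ with $c_{k+1}$ skipped by the chord. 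For $7\le k\le 10$ the mirror pair $(x,c_7)$ does the same. Without such a witness, the only non-routine step of the lemma is missing.

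There is also a smaller error. Your diametric-chord step is correct and matches the paper: $c_1c_7$ and $c_2c_8$ are mirror images giving $H_{4,4}$, and the four non-separating diameters give $H_{3,5}$. Your $4$-cycle argument for $c_0c_2$ and $c_1c_3$ is also correct. But that argument does not cover the two chords straddling an endpoint of the $3$-arc, $c_2c_4$ and $c_{11}c_1$. With $c_2c_4$, the vertices $c_0$ and $c_3$ have no common neighbour, so the cycles through $x$ have lengths $5,6,11,12$ and no $4$-cycle exists. Use instead the $6$-cycle $x\,c_0c_1c_2c_4c_3$ with the pendant path $c_4c_5\cdots c_{11}$ of length $7$, an $H_{3,5}$, and its mirror image for $c_{11}c_1$.
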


\begin{proof}
Let $x=x_0$ be attached to $(c_0,c_3)$; the reflection $i\mapsto 3-i$ is a
symmetry of the configuration fixing $x$. Consider first distance-$2$ chords
$c_kc_{k+2}$.
\begin{itemize}[leftmargin=2em]
\item $c_0c_2$ or $c_1c_3$: the $4$-cycle $x\,c_0c_2c_3$ (resp.\
$x\,c_0c_1c_3$) has a pendant path of length $8$ along $C$: $H_{2,6}$.
\item $c_2c_4$: the $6$-cycle $x\,c_3c_4c_2c_1c_0$ has the pendant path
$c_4c_5\cdots c_{11}$ of length $7$: $H_{3,5}$. The chord $c_{11}c_1$ is its
mirror image.
\item $c_kc_{k+2}$ for $3\le k\le 7$: the vertices $c_1,c_9$ are at
$C$-distance $4$ and nonadjacent (Lemma~\ref{lem:12-chords}), and the $8$-arc
$c_1c_2\cdots c_9$ is a $9$-path between them; the path
$c_1\,c_0\,x\,c_3\,c_4\cdots c_9$ modified to use the chord $c_kc_{k+2}$ to
skip $c_{k+1}$ is a second $9$-path ($3+6-1=8$ edges). This contradicts
\ref{f:obs}. For $k=8,9,10$ apply the reflection $i\mapsto 3-i$ (images of
$k=7,6,5$).
\end{itemize}
Now the diametric chords ($d=6$):
\begin{itemize}[leftmargin=2em]
\item $c_0c_6$ (and its mirror $c_3c_9$): the $6$-cycle $x\,c_3c_4c_5c_6c_0$
has the pendant path $c_6c_7\cdots c_{11}$ of length $5$: $H_{3,5}$.
\item $c_4c_{10}$, $c_5c_{11}$: the $6$-cycle $x\,c_0c_{11}c_{10}c_4c_3$
(resp.\ $x\,c_0c_{11}c_5c_4c_3$) has a pendant path of length $5$
($c_4c_5\cdots c_9$, resp.\ $c_5\cdots c_{10}$): $H_{3,5}$.
\item $c_1c_7$: the $8$-cycle $x\,c_0c_1c_7c_6c_5c_4c_3$ has the pendant path
$c_7c_8c_9c_{10}c_{11}$ of length $4$: $H_{4,4}$. $c_2c_8$: the $8$-cycle
$x\,c_3c_2c_8c_9c_{10}c_{11}c_0$ has the pendant path $c_8c_7c_6c_5c_4$:
$H_{4,4}$. \qedhere
\end{itemize}
\end{proof}

\begin{lemma}\label{lem:12-nochordZ}
If $z$ exists then $C$ has no chord.
\end{lemma}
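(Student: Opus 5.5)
Throughout, normalise so that $z$ is attached to $(c_0,c_1)$. The attachment of $z$ turns the edge $c_0c_1$ into the $2$-path $c_0\,z\,c_1$. So every cycle or path in $G[V(C)]$ that uses the edge $c_0c_1$ has a twin through $z$ that is exactly one edge longer. The plan is to pair this ``$+1$'' with the ``$-1$'' or ``re-routing'' effect of a chord, and to kill each chord allowed by Lemma~\ref{lem:12-chords} either by a second $9$-path (via \ref{f:obs}) or by a pendant-path configuration (via \ref{f:pendant}). By Lemma~\ref{lem:12-chords} we only need to treat chords at distance $2$ and at distance $6$.

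\emph{Distance-$2$ chords $c_kc_{k+2}$.} First take the two chords whose skipped $2$-arc contains the edge $c_0c_1$, namely $c_{11}c_1$ and $c_0c_2$. For $c_0c_2$, the $4$-cycle $z\,c_1c_2c_0$ carries the pendant path $c_2c_3\cdots c_{11}$, of length $9$. This is an $H_{2,6}$. The chord $c_{11}c_1$ is the mirror image under the reflection $i\mapsto 1-i$, which fixes $z$.

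For every other $k$, the edge set $\{c_0c_1\}$ and the $2$-arc $c_kc_{k+1}c_{k+2}$ are disjoint. The two gaps of $C$ between them have lengths $k-1$ and $10-k$, which sum to $9$, so one gap has length at least $5$. Hence some $8$-arc $Q$ of $C$ contains both $c_0c_1$ and $c_kc_{k+1}c_{k+2}$. The endpoints $u,v$ of $Q$ are at $C$-distance $4$, so they are nonadjacent by Lemma~\ref{lem:12-chords}. Now $Q$ is one $9$-path between $u$ and $v$. A second one is obtained from $Q$ by replacing $c_0c_1$ with $c_0zc_1$ (one edge longer) and replacing $c_kc_{k+1}c_{k+2}$ with the chord (one edge shorter). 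The two modifications act on disjoint parts of $Q$, so the result is still a path, with $8$ edges and endpoints $u,v$. This contradicts \ref{f:obs}.

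\emph{Diametric chords $c_kc_{k+6}$.} Such a chord splits $C$ into two $7$-cycles, each consisting of the chord plus a $6$-arc. The edge $c_0c_1$ lies on the $6$-arc of exactly one of them, even when the chord is incident with $c_0$ or $c_1$, as in $c_0c_6$ or $c_1c_7$. Routing that side through $z$ gives an $8$-cycle containing $c_k$ and $c_{k+6}$. The opposite $6$-arc, minus its last vertex, is a path of length $5$ hanging at $c_k$ and disjoint from the $8$-cycle. This gives an $H_{4,4}$, contradicting \ref{f:pendant}.

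The only step needing care is the covering-arc argument in the distance-$2$ case. Specifically, one must confirm for each remaining $k$ that an $8$-arc containing both pieces exists, and that its endpoints, at distance $4$, are genuinely nonadjacent. I would verify this by the gap count above rather than by listing the cases, and I would handle the two overlapping chords separately as described.
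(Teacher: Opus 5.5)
Your proof is correct and follows the paper's argument. For $c_0c_2$ and $c_{11}c_1$ you use the $4$-cycle through $z$ with its $H_{2,6}$. For the remaining distance-$2$ chords you build a second $9$-path along an $8$-arc between a nonadjacent distance-$4$ pair, with a $z$-detour and a chord shortcut. For the diametric chords you use an $8$-cycle through $z$ with a pendant path, giving $H_{4,4}$. The differences are only organisational: your gap count replaces the paper's fixed arc $c_0c_1\cdots c_8$ for $1\le k\le 6$ plus the reflection $i\mapsto 1-i$, and you handle all six diametric chords uniformly. Your pendant path of length $5$ also avoids the paper's slip for $c_0c_6$, where $c_6c_7c_8c_9$ is called length $4$ but has length $3$; this is harmless, since $c_6\cdots c_{10}$ works.
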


\begin{proof}
Let $z$ be attached to $(c_0,c_1)$; the reflection $i\mapsto 1-i$ fixes $z$.
\begin{itemize}[leftmargin=2em]
\item $c_0c_2$ or $c_{11}c_1$: the $4$-cycle $z\,c_0c_2c_1$ (resp.\
$z\,c_1c_{11}c_0$) has a pendant path of length $9$: $H_{2,6}$.
\item $c_kc_{k+2}$ for $1\le k\le 6$: $c_0,c_8$ are at distance $4$ and
nonadjacent, the $8$-arc $c_0c_1\cdots c_8$ is a $9$-path, and
$c_0\,z\,c_1\cdots c_8$ with the chord skipping $c_{k+1}$ is a second one.
For $k=7,\dots,10$ apply $i\mapsto 1-i$ (images of $k=6,\dots,3$).
\item $c_0c_6$ or $c_1c_7$: the $8$-cycle $z\,c_1c_2\cdots c_6c_0$ has the
pendant path $c_6c_7c_8c_9$ of length $4$: $H_{4,4}$ (for $c_1c_7$, mirror).
\item $c_kc_{k+6}$ for $2\le k\le 5$: the $8$-cycle
$z\,c_1\cdots c_k\,c_{k+6}\cdots c_{11}c_0$ has the pendant path
$c_kc_{k+1}\cdots c_{k+5}$ of length $5$: $H_{4,4}$. \qedhere
\end{itemize}
\end{proof}

\begin{lemma}[$9$-arc lemma]\label{lem:12-9arc}
Suppose $A\neq\emptyset$ (so $C$ has no chord). For $a\in A$ the pair
$(c_a,c_{a+3})$ is nonadjacent, and for every $a'\in A$ with
$a'-a\in\{3,4,\dots,9\}\pmod{12}$ the walk
\[
c_a\,c_{a-1}\cdots c_{a'+3}\;x_{a'}\;c_{a'}\,c_{a'-1}\cdots c_{a+3}
\]
is a $9$-path between $c_a$ and $c_{a+3}$ (it has
$(9-(a'-a))+2+((a'-a)-3)=8$ edges). Hence, by uniqueness, each $a\in A$ has
at most one $a'\in A$ with $a'-a\notin\{0,\pm1,\pm2\}$. Consequently
$|A|\le 4$ and $A$ lies in a window of five consecutive positions; up to
rotation and reflection,
\[
A\in\bigl\{\{0\},\{0,1\},\{0,2\},\{0,3\},\{0,4\},\{0,5\},\{0,6\},
\{0,1,2\},\{0,1,3\},\{0,2,4\},\{0,1,2,3\}\bigr\}.
\]
\end{lemma}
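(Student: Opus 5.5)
Nonadjacency of $(c_a,c_{a+3})$ is immediate: $C$ has no chord by Lemma~\ref{lem:12-nochordA}, and the two vertices are at $C$-distance $3$, so they are not consecutive on $C$. For the path claim, set $m=a'-a\in\{3,\dots,9\}$. The walk goes backwards along $C$ from $c_a$ to $c_{a'+3}$, then through $x_{a'}$, then backwards from $c_{a'}$ to $c_{a+3}$. First I will check that it is a path. The two $C$-arcs are $\{c_{a'+3},\dots,c_{a}\}$ (read backwards, i.e.\ positions $a'+3,\dots,a+12$) and $\{c_{a+3},\dots,c_{a'}\}$. These are disjoint arcs exactly when $a+3\le a'$ and $a'+3\le a+12$, which is the hypothesis $3\le m\le 9$. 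The vertex $x_{a'}$ lies in $R$, so it is distinct from all of them. The edge count $(12-(m+3))+2+(m-3)=8$ then gives a $9$-path, and when $m=3$ or $9$ one arc degenerates to a single vertex, which is still fine. The second $9$-path is the arc $c_a c_{a+1}\cdots$? No: the $3$-arc is too short, so the competing path will be $c_a\,x_a\,c_{a+3}$ extended. Actually the cleaner competitor is the path obtained from another $a''$. So the uniqueness clause follows directly: if two distinct $a',a''\in A$ both satisfy $a'-a,a''-a\notin\{0,\pm1,\pm2\}$, they yield two distinct $9$-paths between $c_a$ and $c_{a+3}$. These paths are distinct because they pass through different vertices $x_{a'}\ne x_{a''}$, by Lemma~\ref{lem:12-mult}, which contradicts \ref{f:obs}.

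For the structural consequences, I will view $A$ as a subset of $\mathbb Z_{12}$ in which each element is ``far'' (cyclic difference $\ge3$) from at most one other element. The key step is to show that $A$ lies in a window of five consecutive positions. Suppose not. Then I will pick $a\in A$ and try to exhibit two far elements. Concretely, take the shortest cyclic arc containing $A$. If that arc has length $\ge5$ (span $\ge5$), let $a,b$ be its endpoints. Every element of $A$ within distance $2$ of $a$ is far from $b$ unless it is also within $2$ of $b$, and similarly the other way round. A short case check on the gaps should then produce an element with two far partners. I expect this to be the main obstacle. The point is that the "far" relation could in principle pair elements up as a perfect matching (for example $\{0,6\}$ or $\{0,1,6,7\}$-type sets). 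Such sets must be excluded, or else shown to fit in a window.

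I will handle this by noting that $\{0,6\}$ does fit in a window, because the list allows $\{0,6\}$ up to rotation, reading it as a two-element set. So the window claim must really be about sets with $|A|\ge3$. For $|A|\ge3$, choose three elements; if two of them are far from the third, we are done. Otherwise each element has at most one far partner and is near (difference $\le2$) to all the rest. A near-graph on $\mathbb Z_{12}$ in which every vertex misses at most one other forces diameter $\le4$. This gives the window of five positions and $|A|\le4$, since a window of $5$ containing $0$ and $4$ makes $0$ and $4$ far, so $1,2,3$ cannot all be present alongside them.

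Finally, I will enumerate subsets up to the dihedral action. Two-element sets by difference $1,\dots,6$ give $\{0,d\}$. Three-element sets inside a window of $5$ with at most one far pair give $\{0,1,2\},\{0,1,3\},\{0,2,4\}$ (the set $\{0,1,4\}$ reflects to $\{0,3,4\}\sim\{0,1,4\}$; I must check it is excluded or equivalent to $\{0,1,3\}$). Four-element sets give $\{0,1,2,3\}$. For each candidate excluded from the list, I will check whether some element has two far partners.
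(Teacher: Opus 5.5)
Your proof is essentially the paper's. It uses the same backwards-arc $9$-path through $x_{a'}$, and the same observation that two admissible $a'$ give distinct $9$-paths because each path contains exactly one $R$-vertex. The combinatorial consequence is then a direct check of small subsets of $\mathbb Z_{12}$, which the paper simply asserts.

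Two slips in your combinatorial part need fixing.

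\begin{itemize}
\item $\{0,5\}$ and $\{0,6\}$ do \emph{not} lie in any five consecutive positions, so your sentence saying $\{0,6\}$ fits in a window is wrong. The statement itself is loose here. The correct reading, which you then adopt, is that the window claim is meant for $|A|\ge 3$.
\item Pairwise cyclic distance at most $4$ does not by itself force a window of five in $\mathbb Z_{12}$. The set $\{0,4,8\}$ is a counterexample, and up to rotation the only one. You must exclude it explicitly, which is immediate because each of its elements has two far partners.
\end{itemize}

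With that repair, the enumeration closes after the one-line checks you left open. $\{0,1,4\}$ fails because $4$ is far from both $0$ and $1$. Every $4$-subset of $\{0,\dots,4\}$ other than $\{0,1,2,3\}$ or $\{1,2,3,4\}$ has an element with two far partners.
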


\begin{proof}
Nonadjacency of $(c_a,c_{a+3})$ holds since $C$ has no chord. Distinct
admissible $a'$ give distinct $9$-paths between the same nonadjacent pair, so
at most one exists. For the combinatorial consequence, one checks directly
that any other $3$- or $4$-subset of $\mathbb Z_{12}$ has an element with two
partners at cyclic distance $\ge 3$ (and $\{0,2,3\}$ is the reflection of
$\{0,1,3\}$).
\end{proof}

\begin{lemma}[$8$-arc lemma]\label{lem:12-8arc}
If $z$ (attached to $(c_b,c_{b+1})$) and $x_a$ both exist, then
$b-a\in\{0,1,2\}\pmod{12}$.
\end{lemma}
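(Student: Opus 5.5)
The plan is to imitate the proof of the $9$-arc lemma (Lemma~\ref{lem:12-9arc}): exhibit a nonadjacent pair of $C$-vertices joined by two distinct $9$-paths, contradicting \ref{f:obs}. Since $A\neq\emptyset$ (and $z$ exists), Lemmas~\ref{lem:12-nochordA} and~\ref{lem:12-nochordZ} show that $C$ is chordless. So any two $C$-vertices at $C$-distance $4$ are nonadjacent. For such a pair $(c_i,c_{i+4})$, the long $8$-arc $c_{i+4}c_{i+5}\cdots c_{i+12}$ is one $9$-path between them.

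To build a second $9$-path, I would modify this $8$-arc using both outside vertices, so that the two changes cancel:
\begin{itemize}[leftmargin=2em]
\item the detour through $z$ replaces the edge $c_bc_{b+1}$ by $c_b\,z\,c_{b+1}$, adding one edge;
\item the shortcut through $x_a$ replaces the $3$-arc $c_a c_{a+1}c_{a+2}c_{a+3}$ by $c_a\,x_a\,c_{a+3}$, removing one edge.
\end{itemize}
The result has $8-1+1=8$ edges. It is a path provided three conditions hold: both the edge $c_bc_{b+1}$ and the $3$-arc $[a,a+3]$ lie inside the $8$-arc, the edge $c_bc_{b+1}$ is not one of the three edges removed by the shortcut, and $z\ne x_a$. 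The last holds since $z$ is attached at distance $1$ and $x_a$ at distance $3$. The new path differs from the plain arc because it contains $z$.

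The key step is to show that if $b-a\notin\{0,1,2\}$ such a window always exists. If $b-a\notin\{0,1,2\}$, the edge $c_bc_{b+1}$ is not among the edges $c_ac_{a+1},c_{a+1}c_{a+2},c_{a+2}c_{a+3}$. Hence the $3$-arc and the edge together occupy $4$ distinct edges of $C$, and the remaining $8$ edges split into two gaps (possibly one empty) with $g_1+g_2=8$. Some gap has $g\ge 4$. Choose $i$ so that the short $4$-arc $c_ic_{i+1}\cdots c_{i+4}$ lies inside that gap; this is possible since the gap is a consecutive run of at least $4$ edges. Then the complementary $8$-arc contains both the $3$-arc of $x_a$ and the edge $c_bc_{b+1}$, and the modification above gives a second $9$-path between $c_i$ and $c_{i+4}$.

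I would also check the boundary cases $b=a+3$ and $b=a-1$, where the segments share a vertex but no edge; the construction still yields a simple path. This forces $b-a\in\{0,1,2\}\pmod{12}$.

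The only delicate point is the bookkeeping that the modified walk is genuinely a path with the right endpoints. That is, the endpoints $c_i,c_{i+4}$ must not be interior to the replaced segments, which holds since they lie at the ends of the gap. I expect this to be routine once the gap argument is set up.
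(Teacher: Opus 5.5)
Your proof is correct and is the paper's argument: both place the $3$-arc of $x_a$ and the edge $c_bc_{b+1}$ inside a common $8$-arc between a chordless distance-$4$ pair, then trade the shortcut through $x_a$ against the detour through $z$ to obtain a second $9$-path. The only difference is cosmetic. You find the common $8$-arc by pigeonhole on the two gaps ($g_1+g_2=8$), whereas the paper computes the joint span ($s+1$ or $15-s$) case by case; you also check explicitly that $z\ne x_a$ and that the endpoints survive, which the paper leaves implicit.
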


\begin{proof}
Otherwise the pairs $\{c_b,c_{b+1}\}$ and $\{c_a,c_{a+3}\}$ are disjoint, or
share exactly one endpoint with $b=a+3$ or $b+1=a$; in each of these cases
both pairs lie inside a common $8$-arc of $C$. (With $s=b-a\bmod 12$: for
$s\in\{4,\dots,7\}$ the joint span is $s+1\le 8$; for $s\in\{8,9,10\}$ it is
$15-s\le 8$ in the other direction; for $s\in\{3,11\}$ it is $5$.) The two
ends of that $8$-arc are at $C$-distance $4$ and nonadjacent (no chords, by
Lemmas~\ref{lem:12-nochordA} and~\ref{lem:12-nochordZ}), and the $8$-arc
itself is a $9$-path between them. Replacing the sub-arc $c_a\cdots c_{a+3}$
by $c_a\,x_a\,c_{a+3}$ (one edge fewer) and the edge $c_bc_{b+1}$ by
$c_b\,z\,c_{b+1}$ (one edge more) gives a second $9$-path between the same
pair.
\end{proof}

\begin{lemma}\label{lem:12-triples}
If $\{a,a+1,a+2\}\subseteq A$ then
$c_a x_a c_{a+3} c_{a+2} x_{a+2} c_{a+5} c_{a+4} x_{a+1} c_{a+1} c_a$ is a
$9$-cycle. If $\{a,a+1,a+3\}\subseteq A$ then
$c_a x_a c_{a+3} x_{a+3} c_{a+6} c_{a+5} c_{a+4} x_{a+1} c_{a+1} c_a$ is a
$9$-cycle. Hence $A\notin\{\{0,1,2\},\{0,1,3\},\{0,1,2,3\}\}$ (up to
symmetry).
\end{lemma}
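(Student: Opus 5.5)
The plan is a direct verification: check that each displayed closed walk is a genuine $9$-cycle in $G$, then translate this into the excluded values of $A$. For the first walk, $c_a x_a c_{a+3} c_{a+2} x_{a+2} c_{a+5} c_{a+4} x_{a+1} c_{a+1} c_a$, I will go through its edges one at a time. The edges $c_a x_a$ and $x_a c_{a+3}$ come from $x_a$ being attached to $(c_a,c_{a+3})$. The edge $c_{a+3}c_{a+2}$ is an edge of $C$. The edges $c_{a+2}x_{a+2}$ and $x_{a+2}c_{a+5}$ come from the attachment of $x_{a+2}$. Then $c_{a+5}c_{a+4}$ is a cycle edge, $c_{a+4}x_{a+1}$ and $x_{a+1}c_{a+1}$ come from the attachment of $x_{a+1}$ to $(c_{a+1},c_{a+4})$, and finally $c_{a+1}c_a$ is a cycle edge. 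That gives nine edges.

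Next I will confirm that the nine listed vertices are distinct. The $C$-vertices are $c_a,\dots,c_{a+5}$, which are six distinct positions modulo $12$. The vertices $x_a,x_{a+1},x_{a+2}$ are distinct by Lemma~\ref{lem:12-mult}, since they occupy distinct position pairs. They are also distinct from the $C$-vertices because they lie in $R$. So the walk is a $9$-cycle, contradicting $C_9$-freeness.

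The second walk is handled the same way: $c_a x_a c_{a+3}$ via $x_a$; $c_{a+3}x_{a+3}c_{a+6}$ via $x_{a+3}$, which exists because $a+3\in A$; the cycle edges $c_{a+6}c_{a+5}c_{a+4}$; then $c_{a+4}x_{a+1}c_{a+1}$ via $x_{a+1}$; and the cycle edge $c_{a+1}c_a$. This is again nine edges. The $C$-positions $a,a+1,a+3,a+4,a+5,a+6$ are distinct, and the three $R$-vertices are distinct by Lemma~\ref{lem:12-mult}.

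For the consequence, I compare with the list in Lemma~\ref{lem:12-9arc}. The set $\{0,1,2\}$ contains $\{a,a+1,a+2\}$ with $a=0$. The set $\{0,1,3\}$ contains $\{a,a+1,a+3\}$ with $a=0$. The set $\{0,1,2,3\}$ contains $\{0,1,2\}$. So all three are excluded. The reflected form $\{0,2,3\}$ is covered because the reflection $i\mapsto 3-i$ (more precisely $i\mapsto -i$ composed with a shift) sends it to a translate of $\{0,1,3\}$, and the argument is invariant under the dihedral symmetry of $C$. The only point needing care is this bookkeeping of orientation: after reflecting, the attachment pairs $(c_i,c_{i+3})$ keep the same shape, so the same cycle exists in mirrored form. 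No step is a real obstacle; the work is just checking indices modulo $12$.
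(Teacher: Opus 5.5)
Your proposal is correct and is essentially the paper's argument: check edge by edge and vertex by vertex that the two displayed closed walks are $9$-cycles, then read off the excluded sets, with $\{0,2,3\}$ handled by reflection. One small citation fix: distinctness of $x_a,x_{a+1},x_{a+2}$ comes from Lemma~\ref{lem:12-attach}, since each $R$-vertex has exactly two $C$-neighbours and so cannot sit on two different pairs; Lemma~\ref{lem:12-mult} gives only the converse, that distinct vertices occupy distinct pairs.
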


\begin{proof}
Direct verification of the two cycles (nine vertices each, consecutive
vertices adjacent).
\end{proof}

\begin{proposition}\label{prop:l12}
No graph exists in Case $L=12$.
\end{proposition}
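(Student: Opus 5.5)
I will organise the argument according to whether $R$ contains a distance-$3$ vertex ($A\ne\emptyset$), and then reduce to a short explicit list of candidate graphs. By Lemmas~\ref{lem:12-indep} and~\ref{lem:12-attach}, $G$ is completely determined by three pieces of data: the chords of $C$, the set $A$, and the optional vertex $z$.

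\textbf{Case $A=\emptyset$.} Here $R\subseteq\{z\}$, by Lemma~\ref{lem:12-mult}. If $R=\emptyset$ then $G=G[V(C)]$, so $C$ is a Hamiltonian $12$-cycle whose chords have distance $2$ or $6$ (Lemma~\ref{lem:12-chords}). If $R=\{z\}$ then $C$ has no chord by Lemma~\ref{lem:12-nochordZ}, so $G$ is $C_{12}$ with a triangle hat $z$ on the edge $c_0c_1$. In that graph I would kill the pair $c_0,c_8$: the arc $c_0c_1\cdots c_8$ and the path $c_0 z c_1\cdots c_7$ followed by $c_8$ are not both available, so instead I look for a second $9$-path, for example $c_1c_0c_{11}\cdots c_5$ between $c_1,c_5$ together with the arc $c_1\cdots c_5$ routed the long way. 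Concretely, I would find a nonadjacent pair with no $9$-path at all, such as $z$ and $c_6$: the candidates are $z c_1\cdots c_8$ and $z c_0 c_{11}\cdots c_5$, whose endpoints are $c_8$ and $c_5$, not $c_6$, so $P(z,c_6)$ does not exist.

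For $R=\emptyset$, I work with the chord set using only distance-$2$ and distance-$6$ chords. If there are no chords, the pair $c_0,c_2$ has no $9$-path (the only paths are the two arcs, of lengths $2$ and $10$). In general I would consider an opposite pair $c_0,c_6$ at distance $6$ (when it is not a chord) and count $9$-paths. Each distance-$2$ chord shortens an arc by one edge, and a diametric chord creates crossings. I expect either zero or at least two $9$-paths, a $9$-cycle formed by a diameter with three short chords, or an $H_{3,5}$/$H_{4,4}$ created by chord interactions (the same pendant-path checks as in Lemma~\ref{lem:12-nochordA}). I would organise this by the number of distance-$2$ chords. A $4$-cycle formed by two distance-$2$ chords, $c_0c_2,c_2c_4$, gives a cycle $c_0c_2c_4\ldots$; more usefully, two consecutive distance-$2$ chords yield a $10$-cycle, which is allowed, so I rely on the $9$-path counting for those cases.

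\textbf{Case $A\ne\emptyset$.} Here $C$ is chordless by Lemma~\ref{lem:12-nochordA}. Lemmas~\ref{lem:12-9arc} and~\ref{lem:12-triples} leave $A\in\{\{0\},\{0,1\},\{0,2\},\{0,3\},\{0,4\},\{0,5\},\{0,6\},\{0,2,4\}\}$, and $z$ is either absent or at $b$ satisfying Lemma~\ref{lem:12-8arc} for every $a\in A$. This is a list of at most a few dozen graphs. For each, I would exhibit a nonadjacent pair with zero $9$-paths. The natural choice is $x_0$ together with a cycle vertex such as $c_8$ or $c_{9}$: paths leaving $x_0$ go through $c_0$ or $c_3$ and then run along the chordless cycle, possibly detouring through another $x_{a'}$ or $z$, so their endpoints form a small computable set. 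I expect $c_7$ or a similar vertex to be missed, except when other attachments supply exactly one or two paths. Alternatively, I would find a pair with two paths, as in Lemma~\ref{lem:12-9arc}.

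The main obstacle is the chord-only subcase $R=\emptyset$. There the number of chord configurations is larger, and a uniform killer pair must be found. I would first try the pair $c_i,c_{i+2}$ for a position with no chord $c_ic_{i+2}$, and count $9$-paths via the chord structure, falling back on $C_9$ detection when many chords are present.
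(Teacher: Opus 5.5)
Your reduction matches the paper's: by Lemmas~\ref{lem:12-indep}--\ref{lem:12-triples}, $G$ is determined by the chord set, the set $A$ (one of the eight you list) and the optional $z$. But the proposition \emph{is} the killing of each residual graph, and you complete only one kill, the chordless $C_{12}$. The other concrete kill you state is false. In $C_{12}+z$ with $z\sim c_0,c_1$, the pair $(z,c_6)$ has exactly one $9$-path, namely $z\,c_1\,c_0\,c_{11}\,c_{10}\,c_9\,c_8\,c_7\,c_6$. You overlooked paths that traverse the edge $c_0c_1$ before leaving the triangle: the $9$-paths from $z$ end at $c_5,c_6,c_7,c_8$, once each. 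A correct witness is $(z,c_3)$, or the paper's $(c_0,c_2)$. The same oversight affects your plan for $A\neq\emptyset$. The pair $(x_0,c_9)$ has no $9$-path in $C_{12}+x_0$. But once $z$ sits on $(c_0,c_1)$, which Lemma~\ref{lem:12-8arc} allows, $x_0\,c_3\,c_2\,c_1\,z\,c_0\,c_{11}\,c_{10}\,c_9$ becomes its unique $9$-path. The paper in fact needs different witnesses in different subcases: $(c_0,c_3)$, $(c_2,c_5)$, and $(x_0,x_m)$ with two or four $9$-paths. So each of the dozen or so graphs must actually be checked.

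The larger gap is the chord-only subcase $R=\emptyset$, which you flag yourself and leave as an expectation. The paper makes it small through a structural reduction you have not supplied:
\begin{enumerate}[label=(\roman*),leftmargin=2.6em]
\item A diametric chord excludes every distance-$2$ chord. A $6$-cycle through $c_0c_6$ and a chord on one side carries an $H_{3,5}$, and the crossing chords $c_5c_7$, $c_{11}c_1$ give an $8$-cycle with an $H_{4,4}$ pendant.
\item Any three diameters contain a $9$-cycle, with one check per triple type $\{i,i+1,i+2\}$, $\{i,i+1,i+3\}$, $\{i,i+2,i+4\}$.
\item Two distance-$2$ chords always give a nonadjacent pair with two $9$-paths: the chords cross, share an end, or both shortcut the $9$-arc $c_0\cdots c_9$.
\end{enumerate}
Only after (i)--(iii) is the residue short enough to kill pair by pair: $C_{12}$ with at most one distance-$2$ chord, or with at most two diameters. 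Even then the witness must be chosen with care. In $C_{12}+c_0c_6+c_2c_8$, the pair $(c_3,c_5)$ has the form $(c_i,c_{i+2})$ you propose, yet it has exactly one $9$-path, $c_3\,c_2\,c_8\,c_9\,c_{10}\,c_{11}\,c_0\,c_6\,c_5$. As written, the proposal does not establish the proposition.
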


\begin{proof}
By Lemmas~\ref{lem:12-indep}--\ref{lem:12-triples},
$V(G)=V(C)\cup\{x_a:a\in A\}\cup Z$ where $Z$ is $\{z\}$ or empty, $A$ is one
of the sets allowed by Lemmas~\ref{lem:12-9arc} and~\ref{lem:12-triples}, and
$C$ has no chord unless $A\cup Z=\emptyset$. In each sub-case below we
exhibit a nonadjacent pair with no $9$-path (or two), contradicting
\ref{f:obs}. All paths between the named pair are enumerated in a graph on at
most $15$ vertices in which $R$ is independent, so every path alternates arcs
of $C$ with single-vertex ``detours'' through $R$; the enumerations are
exhaustive.

\emph{(a) $A=\{0\}$}, $z$ on $(c_b,c_{b+1})$ with $b\in\{0,1,2\}$
(Lemma~\ref{lem:12-8arc}) or absent. The pair $(c_0,c_3)$ is nonadjacent, and
$x_0$ cannot be interior to any $c_0$--$c_3$ path (both its neighbours are
the endpoints). The available paths run along the $3$-arc (length $3$; $4$ if
$z$ is used on it) or along the $9$-arc (length $9$; $10$ with $z$). No
$9$-path.

\emph{(b) $A=\{0,1\}$}, $z$ absent or on $(c_1,c_2)$ or $(c_2,c_3)$
(Lemma~\ref{lem:12-8arc} applied to $a=0$ and $a=1$). Paths from $c_0$ to
$c_3$ avoiding $x_0$: $c_0c_1c_2c_3$ (length $3$; with $z$: $4$);
$c_0c_1x_1c_4c_3$ (length $4$); the $9$-arc (length $9$; with $z$: $10$); the
$9$-arc rerouted through $x_1$, i.e.\ $c_0c_{11}\cdots c_4x_1c_1c_2c_3$
(length $12$; with $z$: $13$). No $9$-path.

\emph{(c) $A=\{0,2\}$}, $z$ absent or on $(c_2,c_3)$. Paths from $c_0$ to
$c_3$ avoiding $x_0$: $c_0c_1c_2c_3$ ($3$), $c_0c_1c_2zc_3$ ($4$),
$c_0c_1c_2x_2c_5c_4c_3$ ($6$), the $9$-arc ($9$),
$c_0c_{11}\cdots c_5x_2c_2c_3$ ($10$), $c_0c_{11}\cdots c_5x_2c_2zc_3$
($11$). No $9$-path.

\emph{(d) $A=\{0,m\}$, $3\le m\le 6$} (the cases $m=7,8,9$ are reflections of
$5,4,3$). Here $z$ is absent, since Lemma~\ref{lem:12-8arc} would require
$b\in\{0,1,2\}\cap\{m,m+1,m+2\}=\emptyset$. The vertices $x_0$ and $x_m$ are
nonadjacent, and every $x_0$--$x_m$ path has the form $x_0\,c_i\,(\text{arc})
\,c_j\,x_m$ with $i\in\{0,3\}$, $j\in\{m,m+3\}$, in one of two directions
around $C$; its length is one of
\[
m-1,\quad 17-m,\quad m+2,\quad 14-m,\quad m+2,\quad 14-m,\quad m+5,\quad 11-m.
\]
The number of these equal to $8$ is: two for $m=3$ (namely $11-m$ and $m+5$),
zero for $m=4$, zero for $m=5$, four for $m=6$. In every case the count is
not one.

\emph{(e) $A=\{0,2,4\}$.} Again $z$ is absent. Paths from $c_2$ to $c_5$
avoiding $x_2$: $c_2c_3c_4c_5$ ($3$), $c_2c_3c_4x_4c_7c_6c_5$ ($6$),
$c_2c_1c_0x_0c_3c_4c_5$ ($6$), $c_2c_1c_0x_0c_3c_4x_4c_7c_6c_5$ ($9$
edges---but this path has $10$ vertices, i.e.\ length $9$, not a $9$-path),
$c_2c_1c_0c_{11}\cdots c_5$ ($9$),
$c_2c_1c_0c_{11}\cdots c_7x_4c_4c_5$ ($10$),
$c_2c_3x_0c_0c_{11}\cdots c_5$ ($10$),
$c_2c_3x_0c_0c_{11}\cdots c_7x_4c_4c_5$ ($11$). None has length $8$: no
$9$-path.

\emph{(f) $A=\emptyset$, $z$ present} on $(c_0,c_1)$: by
Lemma~\ref{lem:12-nochordZ} there are no chords, so $G=C_{12}+z$, and the
nonadjacent pair $(c_0,c_2)$ has paths of lengths $2,3,10,11$ only.

\emph{(g) $A=\emptyset$, $z$ absent}, so $V(G)=V(C)$ and all structure is
chords, at distance $2$ or $6$ (Lemma~\ref{lem:12-chords}). First, a
diametric chord excludes every distance-$2$ chord: with $c_0c_6$ present, a
chord $c_ic_{i+2}$ with $\{i,i+1,i+2\}$ contained in $\{0,\dots,6\}$ or in
$\{6,\dots,12\}$ yields a $6$-cycle (e.g.\ $c_0\cdots c_ic_{i+2}\cdots
c_6c_0$) with a pendant path of length $5$ along the other half of $C$:
$H_{3,5}$; and the crossing chords $c_5c_7$, $c_{11}c_1$ yield the $8$-cycle
$c_1c_{11}c_{10}\cdots c_6c_0c_1$\,---\,using $c_0c_6$\,---\,with pendant
path $c_1c_2c_3c_4c_5$ of length $4$: $H_{4,4}$.

\emph{(g1) Only diametric chords}, forming a set $D\subseteq\mathbb Z_6$ of
diameters. If $|D|\ge 3$ then $D$ contains a triple of type $\{i,i+1,i+2\}$,
$\{i,i+1,i+3\}$ or $\{i,i+2,i+4\}$ (the only types up to symmetry), and each
contains a $9$-cycle:
$c_0c_6c_5c_4c_3c_2c_8c_7c_1c_0$;\;
$c_0c_6c_5c_4c_3c_9c_8c_7c_1c_0$;\;
$c_0c_6c_5c_4c_{10}c_9c_8c_2c_1c_0$ respectively. If $D=\{0,m\}$: for $m=1$
the pair $(c_3,c_5)$ has paths of lengths $2,5,5,10,10$ only; for $m=2$ the
pair $(c_4,c_{10})$ has the two $9$-paths
$c_4c_5c_6c_0c_1c_2c_8c_9c_{10}$ and $c_4c_3c_2c_8c_7c_6c_0c_{11}c_{10}$;
for $m=3$ the pair $(c_0,c_2)$ has paths of lengths $2,5,5,6,10$ only. If
$|D|=1$: the pair $(c_1,c_3)$ has paths of lengths $2,5,10$ only. If
$D=\emptyset$: the pair $(c_0,c_3)$ has paths of lengths $3,9$ only.

\emph{(g2) Only distance-$2$ chords.} Suppose two chords $c_0c_2$ and
$c_mc_{m+2}$ ($0<m\le 10$) are present. For $m=1$ (crossing):
$c_0c_2c_1c_3c_4\cdots c_8$ is a second $9$-path for the distance-$4$ pair
$(c_0,c_8)$. For $m=2$ (sharing an endpoint): the distance-$3$ pair
$(c_0,c_9)$\,---\,going the long way\,---\,has the two $9$-paths
$c_0c_2c_3\cdots c_9$ and $c_0c_1c_2c_4\cdots c_9$. For $3\le m\le 7$: both
chords lie on the $9$-arc $c_0\cdots c_9$, and each alone shortcuts it to a
$9$-path: two $9$-paths for $(c_0,c_9)$. For $m=8,9,10$: reflections
($i\mapsto 2-i$) of $m=4,3,2$. So at most one chord exists. With exactly one
chord $c_0c_2$: the pair $(c_{10},c_1)$ (distance $3$) has paths of lengths
$3,9,10$ only. With no chord: $(c_0,c_3)$ has lengths $3,9$ only.

Every sub-case contradicts \ref{f:obs}.
\end{proof}

\begin{remark}\label{rem:ww211}
Lemmas~\ref{lem:12-indep}--\ref{lem:12-mult} prove the $t=9$ instance of
\cite[Lemma~2.11]{WW} (stated there for $t\ge 7$ without proof) in sharpened
form: outside a longest even cycle of length $2t-6$ the vertices form an
independent set, each with exactly two neighbours on the cycle at odd
$C$-distance---and at $t=9$ the odd distance $5$ is impossible (it creates a
$9$-cycle), so only distance $3$, or the degenerate distance $1$ handled by
Lemmas~\ref{lem:12-mult} and~\ref{lem:12-8arc}, can occur. At $t=7$ the
analogous computation leaves only distance $1$, matching Case~1 of the proof
of \cite[Theorem~4.2]{WW}. We found no inconsistency with \cite{WW}.
\end{remark}

\section{Case \texorpdfstring{$L=10$}{L=10}}\label{sec:l10}

Here $\Forb(10)=\{9,12,14,16\}$ and the relevant forbidden subgraphs are
$H_{2,6}$, $H_{3,5}$, $H_{4,4}$, $H_{5,3}$.

\begin{lemma}\label{lem:10-chords}
Every chord of $C$ has distance $4$ or $5$.
\end{lemma}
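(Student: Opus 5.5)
Since $L=10$, a chord has $C$-distance $d\in\{2,3,4,5\}$. By \ref{e:chord} it creates two cycles, of lengths $d+1$ and $11-d$. The plan is to mirror Lemma~\ref{lem:12-chords}: take each value of $d$ in turn and kill it either with a forbidden cycle length from $\Forb(10)=\{9,12,14,16\}$ or with the pendant-path device combined with \ref{f:pendant}. The relevant forbidden subgraphs here are $H_{2,6}$, $H_{3,5}$, $H_{4,4}$ and $H_{5,3}$.

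\emph{Case $d=2$.} The chord $c_0c_2$ gives the $3$-cycle $c_0c_1c_2$ and the $9$-cycle $c_0c_2c_3\cdots c_9c_0$. The $9$-cycle is already forbidden since $G$ has no $9$-cycle, so this case is killed outright.

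\emph{Case $d=3$.} The chord $c_0c_3$ gives the $4$-cycle $c_0c_1c_2c_3$. The complementary $7$-arc $c_3c_4\cdots c_9c_0$ hangs from $c_3$. Its last vertex $c_0$ already lies on the $4$-cycle, so I would take the subpath $c_3c_4\cdots c_9$ instead. This is a pendant path of length $6$ at $c_3$ that meets the $4$-cycle only at $c_3$. The result is an $H_{2,6}$, contradicting \ref{f:pendant}.

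\emph{Cases $d=4$ and $d=5$.} These remain allowed, which matches the statement. For $d=4$ the chord gives a $5$-cycle and a $7$-cycle. For $d=5$ it gives two $6$-cycles, and the complementary arc then yields a pendant path of length only $4<5$, so no $H_{3,5}$ arises. There is nothing to prove in these cases.

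The only point needing care is the pendant-path device itself. When the complementary arc is used as a pendant path, it must be internally disjoint from the short cycle. This means dropping its final vertex, which loses one unit of length, so the device gives a pendant path of length $L-d-1$ rather than $L-d$. For $d=3$ this length is $6$, exactly the threshold needed for $H_{2,6}$. This step is the one place where a miscount could break the argument, so I would check it explicitly.
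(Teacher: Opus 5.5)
Your proof is correct and takes the same approach as the paper. The case $d=2$ is killed by the $9$-cycle $c_0c_2c_3\cdots c_9c_0$, and $d=3$ by an $H_{2,6}$ on the $4$-cycle $c_0c_1c_2c_3$. Your pendant-path count, $c_3c_4\cdots c_9$ of length $6$, is the accurate one: the paper writes $7$, counting the endpoint $c_0$ that already lies on the cycle, but $6$ still meets the $H_{2,6}$ threshold, so the conclusion is unaffected.
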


\begin{proof}
$d=2$: cycles $3$ and $9$ by \ref{e:chord}, a $9$-cycle. $d=3$: a $4$-cycle
with a pendant path of length $7\ge 6$: $H_{2,6}$.
\end{proof}

\begin{lemma}\label{lem:10-chordal}
Every chordal path has length at most $3$; a chordal path of length $3$ joins
$C$-vertices at distance exactly $2$.
\end{lemma}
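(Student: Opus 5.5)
The plan is to read off both claims from the forbidden subgraphs \ref{f:pendant} and the forbidden cycle lengths $\Forb(10)=\{9,12,14,16\}$, using \ref{e:attach} and the pendant-path device. Let $Q=c_i\,r_1\cdots r_{\ell-1}\,c_j$ be a chordal path of length $\ell\ge 2$ with $r_1,\dots,r_{\ell-1}\in R$. Let $s=\dist(c_i,c_j)$, where $1\le s\le 5$ because the endpoints of a path are distinct.

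\emph{Length at most $3$.} Suppose $\ell\ge 4$. Then $Q$ has at least three interior vertices $r_1,r_2,r_3$, all in $R$ and hence disjoint from $V(C)$. The path $c_i\,r_1\,r_2\,r_3$ is therefore a pendant path of length $3$ hanging at $c_i$ on the $10$-cycle $C$. This is an $H_{5,3}$, which \ref{f:pendant} forbids. No cycle-length bookkeeping is needed for this step; it uses only that the interior of a chordal path lies in $R$.

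\emph{Length exactly $3$.} Now let $\ell=3$, so $Q=c_i\,r_1\,r_2\,c_j$. By \ref{e:attach}, $Q$ together with the two arcs of $C$ gives cycles of lengths $3+s$ and $13-s$. The cycle using the $s$-arc has the complementary $(10-s)$-arc as a pendant path. We run through $s\in\{1,3,4,5\}$ and kill each value:
\begin{itemize}[leftmargin=2em]
\item $s=1$ gives a $12$-cycle, and $12\in\Forb(10)$.
\item $s=3$ gives a $6$-cycle with a pendant path of length $7\ge 5$. This is an $H_{3,5}$.
\item $s=4$ gives a $9$-cycle (the long side has length $13-4$).
\item $s=5$ gives an $8$-cycle with a pendant path of length $5\ge 4$. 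This is an $H_{4,4}$.
\end{itemize}
Only $s=2$ survives, with cycle lengths $5$ and $11$, neither of which is forbidden. This proves the lemma.

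I do not expect a real obstacle here. The only points to check are that the pendant arcs are genuinely vertex-disjoint from the cycles they hang on, which holds because the arcs of $C$ are disjoint apart from the shared endpoint, and that $s=0$ cannot occur.
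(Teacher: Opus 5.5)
Your proof is correct and follows the paper's argument essentially verbatim: the $H_{5,3}$ kill for length $\ge 4$, then a case split on $s$ with the same kills for $s=3,4,5$. The one difference is at $s=1$, where you use the forbidden $12$-cycle ($12\in\Forb(10)$) and the paper uses the $4$-cycle with the long arc as pendant path ($H_{2,6}$); both work.

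A small correction to your closing remark: the complementary arc shares \emph{both} its endpoints with the cycle, not one. So the genuine pendant paths have lengths $6$ and $4$ rather than $7$ and $5$, which still meet the thresholds for $H_{3,5}$ and $H_{4,4}$.
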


\begin{proof}
A chordal path of length $\ge 4$ contains a pendant path of length $3$
hanging at $C$: $H_{5,3}$. For length $3$ at end-distance $s$: $s=1$ gives a
$4$-cycle with pendant path of length $9$ ($H_{2,6}$); $s=3$ gives a
$6$-cycle with pendant path of length $7$ ($H_{3,5}$); $s=4$ gives
$3+10-4=9$, a $9$-cycle; $s=5$ gives an $8$-cycle with pendant path of
length $4$ ($H_{4,4}$).
\end{proof}

\begin{lemma}\label{lem:10-attach}
A vertex of $R$ with two $C$-neighbours has them at distance $1$ or $5$; a
vertex of $R$ all of whose neighbours lie on $C$ has exactly two of them.
\end{lemma}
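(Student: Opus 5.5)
We treat the two assertions separately, following the pattern of Lemma~\ref{lem:12-attach}. For the first, suppose $x\in R$ has $C$-neighbours $c_0,c_d$ with $2\le d\le 5$. By \ref{e:attach}, $x$ creates cycles of lengths $d+2$ and $12-d$, and by the pendant-path device the shorter cycle carries the complementary arc as a pendant path of length $10-d$.
\begin{itemize}
\item $d=2$: the $4$-cycle $x\,c_0c_1c_2$ carries a pendant path of length $8\ge 6$, giving $H_{2,6}$.
\item $d=3$: the cycles have lengths $5$ and $9$, and the $9$-cycle is forbidden outright.
\item $d=4$: the $6$-cycle carries a pendant path of length $6\ge 5$, giving $H_{3,5}$. (Alternatively, the other cycle has length $8$, and since $12\in\Forb(10)$ is irrelevant here, $H_{3,5}$ is the cleanest kill.)
\end{itemize}
This leaves $d\in\{1,5\}$. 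For $d=5$ the cycles have lengths $7$ and $7$, both odd and not forbidden, so nothing kills it. For $d=1$ the cycles have lengths $3$ and $11$, also not forbidden. So the first sentence follows from this three-line case check.

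For the second assertion, let $x\in R$ have $N(x)\subseteq V(C)$. By $2$-connectivity (\ref{f:2conn}), $x$ has at least two neighbours, and by the first part all pairwise $C$-distances lie in $\{1,5\}$. On a $10$-cycle we show that no three positions have all pairwise distances in $\{1,5\}$. Starting from $\{0,1\}$, a third position must be in $\{1,9,5\}$ (distance $1$ or $5$ from $0$) and in $\{0,2,6\}$ (from $1$). These sets are disjoint. Starting from $\{0,5\}$, a third position must be in $\{1,9,5\}\cap\{4,6,0\}=\emptyset$. Hence $x$ has exactly two neighbours.

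We do not expect a real obstacle: the argument is a routine application of \ref{e:attach}, the pendant-path device and \ref{f:pendant}. The only point to double-check is that the forbidden-length list $\Forb(10)=\{9,12,14,16\}$ is used correctly. In particular, distance $3$ is killed by the $9$-cycle rather than by a pendant-path argument, and distances $1$ and $5$ produce only the unforbidden lengths $3,11$ and $7,7$, so they genuinely survive.
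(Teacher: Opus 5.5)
Your proof is correct and follows the paper's argument essentially verbatim: the same kills ($H_{2,6}$ for $d=2$, a $9$-cycle for $d=3$, $H_{3,5}$ for $d=4$), then $2$-connectivity plus the check that no three positions on $C_{10}$ are pairwise at distance $1$ or $5$. One harmless slip, which the paper's own wording shares: the complementary arc ends on the cycle, so the usable pendant path has length $9-d$, i.e.\ $7$ for $d=2$ and exactly $5$ for $d=4$ (e.g.\ $c_4c_5\cdots c_9$), and these lengths still meet the thresholds for $H_{2,6}$ and $H_{3,5}$.
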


\begin{proof}
$d=2$: a $4$-cycle with pendant path of length $8$ ($H_{2,6}$). $d=3$:
cycles $5$ and $9$, a $9$-cycle. $d=4$: a $6$-cycle with pendant path of
length $6$ ($H_{3,5}$). No triple of positions on a $10$-cycle has pairwise
distances in $\{1,5\}$.
\end{proof}

\begin{lemma}\label{lem:10-comp}
Every component of $G[R]$ is a single vertex (a \emph{singleton}, attached at
distance $1$ or $5$) or a single edge $uv$ whose two vertices have degree $2$
in $G$, forming a chordal path $c_i\,u\,v\,c_{i+2}$.
\end{lemma}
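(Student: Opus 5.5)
We take a component $K$ of $G[R]$ and bound it using Lemma~\ref{lem:10-chordal}. If $K$ is a single vertex $x$, all neighbours of $x$ lie on $C$, so by Lemma~\ref{lem:10-attach} it has exactly two, at distance $1$ or $5$. So assume $|K|\ge 2$.

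\emph{Step 1: $K$ is a single edge.} Suppose first that $K$ contains a path $u\,v\,w$ on three vertices. By $2$-connectivity (fan lemma) there are two disjoint paths from $\{u,v,w\}$ to $C$. Any such path, together with a suitable subpath of $u\,v\,w$ and a path from $C$ at the other end, gives a chordal path. We aim to show that one of these chordal paths has length $\ge 4$, or else that there is a pendant path of length $3$ hanging at $C$, which is an $H_{5,3}$. Concretely, take a shortest path $Q$ from $K$ to $C$, ending at $c\in V(C)$, with last $R$-vertex $y\in K$. Since $K$ is connected with at least three vertices, some vertex of $K$ is at $K$-distance $\ge 2$ from $y$, or $y$ is the middle of a $P_3$. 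Then a path inside $K$ of length $2$ starting at $y$, followed by $y\,c$, is a pendant path of length $3$ at $c$, hence an $H_{5,3}$. This is the same device as in the proof of Lemma~\ref{lem:10-chordal}. So $K$ has no $3$-vertex path, i.e.\ $K=K_2$.

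\emph{Step 2: the shape of the edge $uv$.} Let $K=\{u,v\}$. First, $u$ must have a $C$-neighbour. Otherwise $N(u)=\{v\}$, and $v$ would be a cut vertex, contradicting \ref{f:2conn}. Likewise $v$ has a $C$-neighbour. If $u$ and $v$ have distinct $C$-neighbours $c_i$ and $c_j$, then $c_i\,u\,v\,c_j$ is a chordal path of length $3$, so $\dist(c_i,c_j)=2$ by Lemma~\ref{lem:10-chordal}.

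Next we show that $u$ and $v$ cannot share a $C$-neighbour $c$. Since each has some $C$-neighbour and $\{u,v\}$ is not separated by $c$, 2-connectivity gives another $C$-neighbour $c'\ne c$ of $u$ or $v$, say of $v$. Then $c\,u\,v\,c'$ is a length-$3$ chordal path, forcing $\dist(c,c')=2$. But $c\,v\,c'$ is also an attachment of $v$ at distance $2$, giving a $4$-cycle with a pendant path of length $8$, an $H_{2,6}$, as in Lemma~\ref{lem:10-attach}.

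So $N_C(u)$ and $N_C(v)$ are disjoint and nonempty. Every pair $(a,b)\in N_C(u)\times N_C(v)$ must be at distance $2$. Moreover, two $C$-neighbours of the same vertex are at distance $1$ or $5$.

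\emph{Step 3: each of $u,v$ has exactly one $C$-neighbour.} Suppose $u$ has two $C$-neighbours $a,a'$, at distance $1$ or $5$, and $b\in N_C(v)$. Then $b$ must be at distance $2$ from both $a$ and $a'$. On a $10$-cycle, two points at distance $1$ have no common point at distance $2$ from both. Two antipodal points likewise have none, since the distances from any point to $a$ and to $a'$ sum to $5$. This finishes the proof, giving $\deg u=\deg v=2$ and the chordal path $c_i\,u\,v\,c_{i+2}$.

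The delicate step is Step~1, which requires getting a genuine pendant path of length $3$, internally disjoint from $C$, out of a component with three vertices. If $K$ is a triangle or a star whose centre is the unique attachment point, the path $y\,c$ plus two edges inside $K$ still works, since $K$ has an induced $P_3$ starting at $y$ or centred at $y$. A $P_3$ centred at $y$ gives only length $2$ from $y$, so there I would instead route through a second attachment of $K$ given by $2$-connectivity, to produce a chordal path of length $\ge 4$.
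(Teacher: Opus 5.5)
Your proof is correct and uses the same ingredients as the paper's: the $H_{5,3}$ pendant-path device, the distance-$2$ constraint of Lemma~\ref{lem:10-chordal}, and the $\{1,5\}$ constraint of Lemma~\ref{lem:10-attach}. The only difference is order: you go component-first, whereas the paper starts from an edge $uv$ on a chordal path $c_i\,u\,v\,c_{i\pm2}$ and then excludes any further $C$- or $R$-neighbour of $u$.

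One spot needs tidying, the star centred at $y$ in Step~1. Your remark that $yc$ plus two edges of $K$ still works is false there. The clean fix is that every leaf $\ell$ has a $C$-neighbour $c'$ by minimum degree $2$. Then $c'\,\ell\,y\,\ell'$ is already a pendant path of length $3$, i.e.\ an $H_{5,3}$, and no chordal path of length $\ge 4$ is needed.
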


\begin{proof}
Let $uv$ be an edge of $G[R]$; it lies on a chordal path, which by
Lemma~\ref{lem:10-chordal} is $c_i\,u\,v\,c_j$ with $j=i\pm 2$. If $u$ had a
further $C$-neighbour $c_k$, then Lemma~\ref{lem:10-attach} applied to $u$
gives $\dist(c_k,c_i)\in\{1,5\}$ while Lemma~\ref{lem:10-chordal} applied to
the chordal path $c_k\,u\,v\,c_j$ gives $\dist(c_k,c_j)=2$: impossible for
$j=i+2$. If $u$ had a further $R$-neighbour $w$: $w$ has another neighbour;
if it is $w'\in R$ then $w'\,w\,u\,c_i$ is a pendant path of length $3$ at
$C$ ($H_{5,3}$) unless $w'=v$, in which case $c_i\,u\,w\,v\,c_j$ is a chordal
path of length $4$; if it is $c_k\in C$ then $c_k\,w\,u\,v\,c_j$ is a chordal
path of length $4$. All contradict Lemma~\ref{lem:10-chordal}.
\end{proof}

\begin{lemma}\label{lem:10-mult}
\leavevmode
\begin{enumerate}[label=(\roman*),leftmargin=2.6em]
\item At most one singleton is attached at distance $1$.
\item Distance-$5$ singletons occupy distinct antipodal pairs
$\{a,a+5\}$, i.e.\ positions in $\mathbb Z_5$; two of them at positions
differing by $\pm 1$ give a $12$-cycle. Hence at most two exist, at positions
differing by $2$ in $\mathbb Z_5$.
\item At most one $K_2$-component exists.
\item A $K_2$-component $c_0\,u\,v\,c_2$ excludes every distance-$1$
singleton and every distance-$5$ singleton except one on $(c_1,c_6)$.
\end{enumerate}
\end{lemma}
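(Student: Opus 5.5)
We treat the four parts in order; each is a short cycle-length or pendant-path computation of the kind used in Lemmas~\ref{lem:12-mult} and~\ref{lem:12-8arc}. Recall from Lemma~\ref{lem:10-attach} that a singleton has exactly two neighbours, both on $C$, at distance $1$ or $5$. So two singletons on the same pair are twins, excluded by \ref{f:twins}. Throughout, the forbidden lengths are $\Forb(10)=\{9,12,14,16\}$.

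\emph{(i)} Let $x$ be attached to $(c_0,c_1)$ and $y$ to $(c_a,c_{a+1})$ with $a\neq 0$. The cycle $x\,c_1\cdots c_a\,y\,c_{a+1}\cdots c_9\,c_0\,x$ uses every edge of $C$ except $c_0c_1$ and $c_ac_{a+1}$, plus four edges through $x,y$. Its length is therefore $10-2+4=12$; this includes $a=1$, where it is $x\,c_1\,y\,c_2\cdots c_0\,x$. A $12$-cycle is forbidden.

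\emph{(ii)} Distinct distance-$5$ singletons occupy distinct antipodal pairs by \ref{f:twins}. Take $x$ on $(c_0,c_5)$ and $y$ on $(c_1,c_6)$. Then $x\,c_0\,c_9\,c_8\,c_7\,c_6\,y\,c_1\,c_2\,c_3\,c_4\,c_5\,x$ is a $12$-cycle. So the occupied positions form an independent set in the $5$-cycle on $\mathbb Z_5$ with adjacency $\pm1$. Such a set has at most two elements, and two elements must differ by $2$.

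\emph{(iii)} By Lemma~\ref{lem:10-comp}, each $K_2$-component $c_i\,u\,v\,c_{i+2}$ is a chordal path of length $3$ replacing the $2$-arc $c_ic_{i+1}c_{i+2}$. Take a second one, $c_j\,u'\,v'\,c_{j+2}$, and split by the position of its $2$-arc.
\begin{itemize}[leftmargin=2em]
\item If the two $2$-arcs share no edge, substituting both into $C$ gives a $12$-cycle.
\item If they are the same arc ($j=i$, either orientation), then $c_i\,u\,v\,c_{i+2}\,v'\,u'\,c_i$ is a $6$-cycle. The $8$-arc from $c_{i+2}$ avoiding $c_{i+1}$ is a pendant path of length $\ge5$ at $c_{i+2}$, giving $H_{3,5}$.
\item If they overlap in one edge ($j=i+1$), then $c_i\,u\,v\,c_{i+2}\,c_{i+1}\,u'\,v'\,c_{i+3}\cdots c_{i-1}\,c_i$ has length $3+1+3+7=14$, which is forbidden.
\end{itemize}

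\emph{(iv)} Fix the component $c_0\,u\,v\,c_2$.
\begin{itemize}[leftmargin=2em]
\item For a distance-$1$ singleton $z$ on $(c_a,c_{a+1})$ whose edge is not $c_0c_1$ or $c_1c_2$, substitute both detours into $C$: one edge is gained from $z$ and one from $u,v$, giving a $12$-cycle.
\item If $z$ is on $(c_0,c_1)$, then $c_0\,z\,c_1\,c_2\,v\,u\,c_0$ is a $6$-cycle with the pendant $8$-arc from $c_2$, giving $H_{3,5}$. The case $(c_1,c_2)$ is symmetric.
\item For a distance-$5$ singleton $x$ on $(c_a,c_{a+5})$, suppose $c_0,c_1,c_2$ all lie on one of the two closed $5$-arcs determined by $c_a,c_{a+5}$. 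Then $x$ together with that $5$-arc is a $7$-cycle. Rerouting it through $u,v$ gives an $8$-cycle, and the other $5$-arc hangs from it as a pendant path of length $5\ge4$, giving $H_{4,4}$.
\end{itemize}
This fails only when $c_1$ is one of $c_a,c_{a+5}$, i.e.\ the pair is $(c_1,c_6)$, which is exactly the stated exception.

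The only step needing real care is (iii). There one must make sure the three overlap patterns, including reversed orientations, are exhaustive, and that the $14$-cycle in the overlapping case is a genuine cycle. All other steps are direct length counts.
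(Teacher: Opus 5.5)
Your proof is correct and follows the paper's argument part by part: the same $12$-cycles in (i), (ii) and the first half of (iv), the same $H_{3,5}$ and $H_{4,4}$ kills in (iv), and in (iii) you kill interleaving arcs ($j=i+1$) with a $14$-cycle where the paper uses an $8$-cycle with a pendant path of length $6$, while also explicitly disposing of two $K_2$-components on the same $2$-arc via $H_{3,5}$, a case the paper's proof of (iii) does not list. One cosmetic point: your ``pendant $8$-arcs'' and ``pendant $5$-arc'' return to the cycle at their far ends, so strictly you should take their initial segments of length $7$ and $4$ respectively, which still suffice for $H_{3,5}$ and $H_{4,4}$.
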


\begin{proof}
(i) Two distance-$1$ singletons $x$ on $(c_0,c_1)$ and $y$ on
$(c_a,c_{a+1})$ give the $12$-cycle
$x\,c_1\cdots c_a\,y\,c_{a+1}\cdots c_0\,x$.
(ii) Equal pairs are twins; positions differing by $1$: the two attachments
give cycles of lengths $6$ and $12$; by $2$: lengths $8$ and $10$, allowed.
(iii) Two $K_2$-components on arcs $[i,i+2]$ and $[j,j+2]$: if the arcs are
disjoint or share an endpoint, one gets a $12$-cycle; if they interleave
($j=i+1$), an $8$-cycle with a pendant path of length $6$ ($H_{4,4}$).
(iv) Distance-$1$ singleton $y$ on $(c_a,c_{a+1})$: for $a\in\{2,\dots,9\}$
the cycle $c_0\,u\,v\,c_2\cdots c_a\,y\,c_{a+1}\cdots c_0$ has length $12$;
for $a\in\{0,1\}$ one gets a $6$-cycle with a pendant path of length $7$
($H_{3,5}$). Distance-$5$ singleton at pair $(a,a+5)$ with
$a\in\{0,2,3,4\}$: an $8$-cycle through $u,v,y$ with a pendant path of
length $4$ ($H_{4,4}$).
\end{proof}

The residual family is now small enough to kill entirely by hand.

\begin{lemma}\label{lem:10-twochords}
$C$ has at most one chord.
\end{lemma}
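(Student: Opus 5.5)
The plan is to suppose $C$ has two distinct chords and kill every configuration. By Lemma~\ref{lem:10-chords} each chord has $C$-distance $4$ or $5$. Up to the dihedral symmetry of $C$ we can fix the first chord as $c_0c_5$ (diametric) or $c_0c_4$ (distance $4$). The second chord is then parametrised by its position relative to the first. This leaves three families: two diameters $c_0c_5,\,c_mc_{m+5}$ with $m\in\{1,2\}$ (since $m=3,4$ are reflections); a diameter together with a distance-$4$ chord; and two distance-$4$ chords $c_0c_4,\,c_mc_{m+4}$. In each family we first use the reflection fixing the first chord to halve the list, and we note separately whether the two chords cross, share an endpoint, or are disjoint and nested.

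For each configuration the first tool is cycle-length bookkeeping on the graph $C$ plus the two chords. A crossing pair of chords splits $C$ into four arcs. The cycles using both chords have lengths given by sums of two opposite arcs plus $2$. We look for a length in $\Forb(10)=\{9,12,14,16\}$, typically $9$ or $12$. For the cycles using a single chord we look for a short cycle ($4$, $6$ or $8$) in which the other chord lets the complementary arc be rerouted into a longer pendant path. This yields an $H_{2,6}$, $H_{3,5}$ or $H_{4,4}$ via the pendant-path device, exactly as in Lemma~\ref{lem:12-nochordA}. A chord at distance $4$ already gives a $5$-cycle and a $7$-cycle. Combined with a second chord, I expect most distance-$4$ configurations to produce a $9$-cycle or a $12$-cycle directly. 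For instance, two disjoint parallel chords give a cycle consisting of both chords and two arcs, and its length can be tuned to $9$ or $12$ by the offset $m$.

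Where no forbidden subgraph appears, the fallback is \ref{f:obs}. I will choose a nonadjacent pair of $C$-vertices at $C$-distance $2$ or $3$ that has an obvious $9$-path, namely the long arc (length $8$ between vertices at distance $2$). I then either exhibit a second $9$-path by rerouting that arc through a chord, or, symmetrically, show that every path between the pair has length in a list avoiding $8$. The graph has only $10$ vertices and two chords, so all paths between a fixed pair can be listed by hand, as in sub-case (g) of Proposition~\ref{prop:l12}.

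The main obstacle I expect is the two-diameter case with $m=2$ and the case of a diameter crossing a distance-$4$ chord. There the cycle lengths that arise ($4,6,8,10$) all look admissible, and pendant paths are too short for $H_{k,8-k}$. In these cases I would first try the pair of endpoints of a long arc avoiding the chord ends, mirroring the $D=\{0,2\}$ computation in (g1), where two $9$-paths appeared. If that fails, I would choose the pair adjacent to a chord end so that the arc and its chord-shortcut variants give lengths $\{2,5,\dots\}$ missing $8$, and enumerate all paths.
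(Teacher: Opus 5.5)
Your frame is the paper's: fix one chord up to dihedral symmetry, parametrise the second (two distance-$5$ chords, two distance-$4$ chords, mixed), and kill each of the resulting ten classes. But the proposal does not kill any class, and the tools you plan to lead with are essentially unavailable here.

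$C$ plus two chords has only ten vertices. It therefore contains no cycle of length $12$, $14$ or $16$, and no $H_{3,5}$, $H_{4,4}$ or $H_{5,3}$, which need $11$, $12$ and $13$ vertices. Since nothing is yet known about $R$, you cannot borrow outside vertices to lengthen pendant paths as in Lemma~\ref{lem:12-nochordA}. Your example also fails: the disjoint distance-$4$ chords $c_0c_4,c_5c_9$ close only the $4$-cycle $c_0c_4c_5c_9$, so nothing can be tuned to $9$ or $12$.

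The only forbidden cycle that can occur is a $9$-cycle. It appears in exactly one class, $\{c_0c_5,c_2c_6\}$, as $c_2c_6c_7c_8c_9c_0c_5c_4c_3c_2$. This is a diameter crossing a distance-$4$ chord, one of the cases you expected to be hardest. The other nine classes must be killed through \ref{f:obs}.

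The genuine gap is your second fallback: listing all paths between a pair in $C_{10}$ plus exactly two chords and finding none of length $8$. That is not a monotone kill. The lemma is proved while $R$ may still be nonempty, since Lemmas~\ref{lem:10-noK2}--\ref{lem:10-y} come later. Even with $R=\emptyset$, a third chord could supply the missing $9$-path. So such a count excludes no graph $G$ that merely contains the two chords.

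Each class needs a certificate that survives added vertices and edges: a $9$-cycle, or a second $9$-path between a pair that cannot become adjacent. Your first fallback is the right tool and works everywhere.

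\begin{itemize}
\item \textbf{Distance-$2$ pairs (six classes).} Such a pair is never adjacent (Lemma~\ref{lem:10-chords}), and its $8$-arc is already a $9$-path, so one rerouted path suffices. Examples: $c_1c_2c_7c_8c_9c_0c_5c_4c_3$ for $(c_1,c_3)$ under $\{c_0c_5,c_2c_7\}$, and $c_2c_1c_5c_6c_7c_8c_9c_0c_4$ for $(c_2,c_4)$ under $\{c_0c_4,c_1c_5\}$.
\item \textbf{Distance-$3$ and distance-$5$ pairs (three classes).} For $\{c_0c_4,c_0c_5\}$, $\{c_0c_5,c_3c_7\}$ and $\{c_0c_4,c_5c_9\}$ you exhibit two $9$-paths for the distance-$3$ pair $(c_2,c_9)$ or the distance-$5$ pair $(c_1,c_6)$. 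For the latter, note that if $c_1c_6$ were a chord, each path would close into a $9$-cycle.
\end{itemize}

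Carrying this out in all ten classes and dropping the non-monotone option gives the paper's proof.
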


\begin{proof}
We show any two chords are killed. Recall that the distance-$2$ pairs
$(c_i,c_{i+2})$ are always nonadjacent and the $8$-arc between them is a
$9$-path, so any \emph{second} $9$-path between them is fatal; distance-$3$
pairs are always nonadjacent (Lemma~\ref{lem:10-chords}); distance-$4$/$5$
pairs are nonadjacent unless joined by a chord. A $9$-path in a $10$-vertex
graph misses exactly one vertex.

\emph{(a) Two distance-$4$ chords} $c_0c_4$ and $c_kc_{k+4}$; by the
reflection $i\mapsto 4-i$ (which maps $k\mapsto 10-k$) assume
$k\in\{1,2,3,4,5\}$. Second $9$-paths:
$k=1$, pair $(c_2,c_4)$: $c_2c_1c_5c_6c_7c_8c_9c_0c_4$;
$k=2$, pair $(c_1,c_3)$: $c_1c_2c_6c_7c_8c_9c_0c_4c_3$;
$k=3$, pair $(c_6,c_8)$: $c_6c_5c_4c_0c_1c_2c_3c_7c_8$;
$k=4$, pair $(c_3,c_5)$: $c_3c_2c_1c_0c_4c_8c_7c_6c_5$;
$k=5$, pair $(c_1,c_6)$ (distance $5$, nonadjacent): the two $9$-paths
$c_1c_2c_3c_4c_0c_9c_8c_7c_6$ and $c_1c_2c_3c_4c_5c_9c_8c_7c_6$.

\emph{(b) Two distance-$5$ chords} $c_0c_5$ and $c_kc_{k+5}$, $k\in\{1,2\}$
($k=3,4$ by reflection): $k=1$, pair $(c_2,c_4)$:
$c_2c_1c_6c_7c_8c_9c_0c_5c_4$; $k=2$, pair $(c_1,c_3)$:
$c_1c_2c_7c_8c_9c_0c_5c_4c_3$.

\emph{(c) $c_0c_5$ with $c_kc_{k+4}$.} The symmetries $i\mapsto -i$ and
$i\mapsto 5-i$ of the chord $c_0c_5$ act on $k$ by $k\mapsto -4-k$ and
$k\mapsto 1-k$, with orbits $\{0,1,5,6\}$, $\{2,4,7,9\}$, $\{3,8\}$.
$k=0$, pair $(c_2,c_9)$ (distance $3$): the two $9$-paths
$c_2c_1c_0c_4c_5c_6c_7c_8c_9$ and $c_2c_3c_4c_0c_5c_6c_7c_8c_9$;
$k=2$: the $9$-cycle $c_2c_6c_7c_8c_9c_0c_5c_4c_3c_2$;
$k=3$, pair $(c_1,c_6)$: the two $9$-paths
$c_1c_2c_3c_7c_8c_9c_0c_5c_6$ and $c_1c_0c_9c_8c_7c_3c_4c_5c_6$.
\end{proof}

\begin{lemma}\label{lem:10-noK2}
$G[R]$ has no $K_2$-component; hence $R$ consists of singletons.
\end{lemma}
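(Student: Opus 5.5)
The plan is to pin the residual configuration down to an explicit finite list and then kill each member by hand, in the style of Proposition~\ref{prop:l12}. Suppose, for a contradiction, that $G[R]$ has a $K_2$-component. By Lemma~\ref{lem:10-comp} it is a chordal path $c_0\,u\,v\,c_2$ with $\deg u=\deg v=2$ (after rotating). Lemma~\ref{lem:10-mult}(iii) makes this the only $K_2$-component. Lemma~\ref{lem:10-mult}(iv) says every other vertex of $R$ is a distance-$5$ singleton $y$ on $(c_1,c_6)$, and there is at most one such $y$ (twins, \ref{f:twins}). By Lemmas~\ref{lem:10-chords} and~\ref{lem:10-twochords}, $C$ has at most one chord, of distance $4$ or $5$.

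So $G$ is $C_{10}+\{u,v\}$, optionally with $y$ and optionally with one chord. The reflection $i\mapsto 2-i$ fixes the component and $y$'s pair, which roughly halves the chord cases. That leaves about a dozen configurations: the $1$ base case, $\le 5$ distance-$4$ chords and $\le 3$ distance-$5$ chords up to symmetry, each with and without $y$.

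\emph{Base configuration (no chord, no $y$).} Take the nonadjacent pair $(u,c_5)$. Its $u$--$c_5$ paths are:
\begin{itemize}[leftmargin=2em]
\item $u\,v\,c_2c_3c_4c_5$, of length $5$;
\item $u\,c_0c_1\cdots c_5$ and $u\,c_0c_9\cdots c_5$, of length $6$;
\item $u\,v\,c_2c_1c_0c_9\cdots c_5$, of length $9$.
\end{itemize}
None has length $8$, so there is no $9$-path, contradicting \ref{f:obs}. The general principle is that a vertex of degree $2$ can only be passed through, which makes path enumeration short.

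\emph{Adding $y$ or a chord.} Before enumerating paths I would first try to discard each extra edge by a forbidden cycle or an $H_{k,8-k}$ using the pendant-path device. The cycle $c_0uvc_2c_3\cdots c_9$ is a $10$-cycle, and a chord combined with the detour through $u,v$ often produces a $9$- or $12$-cycle. For example, $c_2c_6$ with the path $c_2vuc_0c_9c_8c_7c_6$ gives a $9$-cycle. Likewise, $y$ together with the detour gives the cycle $c_1\,y\,c_6c_5c_4c_3c_2\,v\,u\,c_0c_1$ of length $10$, so I expect $y$ to be killed quickly through one of the pairs $(u,c_1)$, $(u,c_5)$, $(v,c_7)$, or by counting paths from $y$. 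For configurations that survive this, I would test the pairs $(u,c_5)$, $(u,c_1)$, $(v,c_7)$ and $(u,v')$-type pairs, listing all paths as alternations of $C$-arcs with the detours $u v$, $y$ and the chord, and exhibit either zero or two $9$-paths.

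The main obstacle is the bookkeeping once a chord is present. Then $(u,c_5)$ may acquire exactly one $9$-path, and the witnessing pair must be chosen per chord, for instance a distance-$2$ pair $(c_i,c_{i+2})$ on which the chord provides a second $9$-path next to the $8$-arc. I would handle this by fixing, for each chord class, a pair whose $8$-arc is shortened by the chord and lengthened again by the $u v$ detour, which yields the second $9$-path. Having excluded the $K_2$-component, Lemma~\ref{lem:10-comp} shows that every component of $G[R]$ is a singleton, which is the second assertion.
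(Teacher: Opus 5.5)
\textbf{There is a genuine gap: the proposal proves only the base configuration and leaves the chord and $y$ cases open.} Your reduction to $C_{10}+\{u,v\}$, with at most one singleton $y$ on $(c_1,c_6)$ and at most one chord, is legitimate. Your base case is also correct: $(u,c_5)$ has paths of lengths $5,6,6,9$ only. Beyond that, everything is announced (``I expect'', ``I would'') rather than done, roughly twenty configurations in all. (Under $i\mapsto 2-i$ the distance-$4$ chords form six orbits, not at most five.)

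The sample kills you offer as a guide are miscounted. The cycle $c_0uvc_2c_3\cdots c_9c_0$ has $11$ vertices, not $10$: by (E2) a chordal path of length $3$ at end-distance $2$ gives cycles of lengths $5$ and $11$. The path $c_2vuc_0c_9c_8c_7c_6$ closed by $c_6c_2$ is an $8$-cycle, not a $9$-cycle. Neither $8$ nor $11$ lies in $\Forb(10)$, so neither kills anything. Your plan for the chorded cases also fails arithmetically. A distance-$4$ or distance-$5$ chord shortens an $8$-arc by $3$ or $4$ edges, and the $uv$ detour adds back only one. So no second $9$-path between a distance-$2$ pair arises that way.

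\textbf{The missing idea is that no case analysis is needed.} Consider the vertex $c_1$ that the detour bypasses, and its antipode $c_6$. The walks $c_1c_0\,u\,v\,c_2c_3c_4c_5c_6$ and $c_1c_2\,v\,u\,c_0c_9c_8c_7c_6$ are both $9$-paths. They use only edges of $C$ and of the chordal path. If $c_1\not\sim c_6$, they are two $9$-paths joining a nonadjacent pair. If $c_1c_6$ is a chord, either path closes into a $9$-cycle. This kill is monotone, so it disposes of every configuration at once: with or without $y$, with any chords, and without needing Lemmas~\ref{lem:10-mult} or~\ref{lem:10-twochords}. This is exactly the paper's proof.
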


\begin{proof}
A $K_2$-component gives a chordal path $c_0\,u\,v\,c_2$
(Lemma~\ref{lem:10-comp}). The distance-$5$ pair $(c_1,c_6)$ has the two
$9$-paths $c_1c_0uvc_2c_3c_4c_5c_6$ and $c_1c_2vuc_0c_9c_8c_7c_6$, neither
using the potential chord $c_1c_6$. If $c_1c_6\notin E(G)$ these are two
$9$-paths for a nonadjacent pair; if $c_1c_6\in E(G)$, each path closes to a
$9$-cycle. Killed either way.
\end{proof}

\begin{lemma}\label{lem:10-z}
No singleton is attached at distance $1$.
\end{lemma}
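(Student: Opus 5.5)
The plan is to show that a distance-$1$ singleton $z$ can coexist with neither a chord nor a distance-$5$ singleton, and then to kill the bare graph $C_{10}+z$ directly. By Lemmas~\ref{lem:10-noK2} and~\ref{lem:10-mult}(i), if such a $z$ exists it is unique. Normalise so that $z$ is attached to $(c_0,c_1)$. The reflection $i\mapsto 1-i$ then fixes $z$ and halves every case check. What remains is: $C$, at most one chord (Lemma~\ref{lem:10-twochords}) at distance $4$ or $5$ (Lemma~\ref{lem:10-chords}), at most two distance-$5$ singletons (Lemma~\ref{lem:10-mult}(ii)), and $z$. I will kill every such configuration, mostly by exhibiting a nonadjacent pair with zero or two $9$-paths, as in the proofs of Lemmas~\ref{lem:10-twochords} and~\ref{lem:10-noK2}.

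\emph{Step 1: no chord together with $z$.} Take a chord $c_kc_{k+4}$ or $c_kc_{k+5}$ and use the reflection to reduce $k$ to a few representatives. For each, I first check the cycles through $z$ and the chord for lengths in $\Forb(10)$ and for pendant paths giving an $H_{k,8-k}$. Where these checks do not kill the configuration, I look for a nonadjacent pair (typically a distance-$2$ or distance-$3$ pair of $C$, or a pair $(z,c_j)$) with two distinct $9$-paths: one that detours through $z$ and one that uses the chord as a shortcut. The key mechanism is that the detour through $z$ lengthens a path by one edge while a chord shortens it, so combining them produces second paths of exactly length $8$. This mirrors the proofs of Lemmas~\ref{lem:12-nochordZ} and~\ref{lem:12-8arc}.

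\emph{Step 2: no distance-$5$ singleton $y$ together with $z$.} Let $y$ be attached to $(c_a,c_{a+5})$ with $a\in\mathbb Z_5$, reduced by the reflection. The cycles $z\,c_1\cdots c_a\,y\,c_{a+5}\cdots c_0\,z$ and their variants have lengths determined by $a$. I expect some values of $a$ to give a $12$-cycle or an $H_{4,4}$, and the remaining ones to be killed by a pair such as $(y,z)$ or $(z,c_j)$ having two $9$-paths, or none. Since $G$ is now chordless by Step~1, all paths alternate arcs of $C$ with single-vertex detours through $R$, so these path enumerations are short and exhaustive.

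\emph{Step 3: the bare case $G=C_{10}+z$.} The pair $(z,c_5)$ is nonadjacent. Its paths are $z\,c_1\cdots c_5$ (length $5$), $z\,c_0\,c_9\cdots c_5$ (length $6$), $z\,c_0\,c_1\cdots c_5$ (length $6$) and $z\,c_1\,c_0\,c_9\cdots c_5$ (length $7$). None has length $8$, so $(z,c_5)$ has no $9$-path, contradicting \ref{f:obs}.

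The main obstacle is Step~1, and in particular the distance-$5$ chords. There the cycles through $z$ and the chord have length $7$, which is not forbidden, so no cheap cycle-length kill is available. I would first try the distance-$2$ pairs $(c_i,c_{i+2})$, whose $8$-arc is already a $9$-path; a second one should arise by inserting $z$ on the edge $c_0c_1$ and compensating with the chord. If that fails for some $k$, I would fall back on pairs $(z,c_j)$ and enumerate all of their paths exhaustively.
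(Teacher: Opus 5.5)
Your outline has the same architecture as the paper's proof: exclude chords, exclude distance-$5$ singletons, then kill $C_{10}+z$ through the pair $(z,c_5)$. Your Step~3 is correct; your list of lengths $5,6,6,7$ is even more complete than the paper's. The gap is that Steps~1 and~2, which carry all the content, are announced rather than proved, and your guidance for Step~1 points the wrong way.

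For a distance-$5$ chord you rightly note that no forbidden cycle length arises, but you then look only at cycles through $z$. The kill is a pendant-path one, through a cycle that avoids $z$. The chord closes the $5$-arc not containing the edge $c_0c_1$ into a $6$-cycle. The other $5$-arc, with $c_0c_1$ replaced by $c_0\,z\,c_1$, is a path of length $6$ whose first five edges hang off that cycle: an $H_{3,5}$. For $k=0$ this is the $6$-cycle $c_0c_5c_6c_7c_8c_9c_0$ with pendant path $c_0\,z\,c_1c_2c_3c_4$.

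Your first proposed attack cannot succeed here. The graph $C_{10}+c_kc_{k+5}$ is bipartite and the detour $c_0\,z\,c_1$ flips parity, so no path through $z$ joining two $C$-vertices at even distance has length $8$. The chord alone gives no distance-$2$ pair a second $9$-path. Your fallback to pairs $(z,c_j)$ would work: for $c_0c_5$, the pair $(z,c_8)$ has $z\,c_1c_2c_3c_4c_5c_6c_7c_8$ and $z\,c_1c_2c_3c_4c_5c_0c_9c_8$. But these are exactly the two $9$-paths of an $H_{3,5}$, and you carry this out for no $k$.

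Your ``key mechanism'' is also the $L=12$ one, where a distance-$2$ chord saves exactly one edge; here a chord alone saves $3$ or $4$. The paper handles the distance-$4$ chords $c_1c_5$, $c_2c_6$, $c_3c_7$ with second $9$-paths that pair the chord with an adjacent $C$-edge to save one edge. An example is $c_4c_5c_1\,z\,c_0c_9c_8c_7c_6$ for $(c_4,c_6)$. The chords $c_0c_4$ and $c_8c_2$ die by $H_{3,5}$ through the $6$-cycles $z\,c_1c_2c_3c_4c_0$ and $z\,c_1c_2c_8c_9c_0$.

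In Step~2 you again state only expectations, and the claim that the cycle lengths depend on $a$ is wrong. The cycle $z\,c_1\cdots c_a\,y\,c_{a+5}\cdots c_0\,z$ has $1+a+1+(6-a)=8$ vertices for every $a\in\{1,\dots,5\}$, and the arc $c_ac_{a+1}\cdots c_{a+4}$ hangs off it. So every distance-$5$ singleton yields an $H_{4,4}$ uniformly, with no path enumeration needed. Supplying these pendant-path kills turns your outline into the paper's proof.
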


\begin{proof}
Let $z$ be attached to $(c_0,c_1)$. We show that $z$ excludes every chord
and every distance-$5$ singleton, and that $C_{10}+z$ alone fails.

(i) \emph{$z$ kills every distance-$5$ chord $c_kc_{k+5}$}, via $H_{3,5}$: in
each case a $6$-cycle through the chord has a pendant path of length $5$
through $z$. For $k=0$: the $6$-cycle $c_0c_5c_6c_7c_8c_9c_0$ (chord plus far
arc) has the pendant path $c_0\,z\,c_1c_2c_3c_4$ hanging at $c_0$. For $k=2$:
the $6$-cycle $c_2c_3c_4c_5c_6c_7c_2$ has the pendant path
$c_2\,c_1\,z\,c_0c_9c_8$. For $k=3$: the $6$-cycle $c_3c_4c_5c_6c_7c_8c_3$
has the pendant path $c_3c_2c_1\,z\,c_0c_9$. For $k=4$: the $6$-cycle
$c_4c_5c_6c_7c_8c_9c_4$ has the pendant path $c_4c_3c_2c_1\,z\,c_0$. The case
$k=1$ is the mirror image of $k=0$ under $i\mapsto 1-i$.

(ii) \emph{$z$ kills every distance-$4$ chord $c_kc_{k+4}$} (the reflection
$i\mapsto 1-i$ maps $k\mapsto -3-k$, with orbits $\{0,7\}$, $\{1,6\}$,
$\{2,5\}$, $\{3,4\}$, $\{8,9\}$, so it suffices to treat $k=0,1,2,3,8$).
For $k=0$: the $6$-cycle $z\,c_1c_2c_3c_4c_0\,z$ has the pendant path
$c_4c_5c_6c_7c_8$ of length $\ge 5$ hanging at $c_4$: $H_{3,5}$. For $k=8$
(chord $c_8c_2$): the $6$-cycle $z\,c_1c_2c_8c_9c_0\,z$ has the pendant path
$c_2c_3c_4c_5c_6c_7$ hanging at $c_2$: $H_{3,5}$. For $k=1$, the
distance-$2$ pair $(c_4,c_6)$ gains the second $9$-path
$c_4c_5c_1\,z\,c_0c_9c_8c_7c_6$ (besides its $8$-arc). For $k=2$, the pair
$(c_1,c_3)$ gains $c_1\,z\,c_0c_9c_8c_7c_6c_2c_3$. For $k=3$, the pair
$(c_2,c_4)$ gains $c_2c_1\,z\,c_0c_9c_8c_7c_3c_4$.

(iii) \emph{$z$ kills every distance-$5$ singleton} $y$ on $(c_a,c_{a+5})$,
$1\le a\le 5$ (these values cover all five antipodal pairs): the $8$-cycle
$z\,c_1\cdots c_a\,y\,c_{a+5}\cdots c_0\,z$ has the pendant path
$c_ac_{a+1}c_{a+2}c_{a+3}c_{a+4}$ of length $4$ hanging at $c_a$: $H_{4,4}$.

(iv) Hence if $z$ exists then, using also Lemmas~\ref{lem:10-noK2}
and~\ref{lem:10-mult}(i), $G=C_{10}+z$. But then the nonadjacent pair
$(z,c_5)$ has only the paths $z\,c_1c_2c_3c_4c_5$ (length $5$) and
$z\,c_0c_9c_8c_7c_6c_5$ (length $6$): no $9$-path, contradicting \ref{f:obs}.
\end{proof}

\begin{lemma}\label{lem:10-y}
No graph in Case $L=10$ contains a distance-$5$ singleton.
\end{lemma}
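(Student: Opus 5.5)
By Lemmas~\ref{lem:10-noK2}, \ref{lem:10-z} and~\ref{lem:10-mult}(ii), every vertex of $R$ is a distance-$5$ singleton, and there are at most two of them, at positions differing by $2$ in $\mathbb Z_5$. By Lemma~\ref{lem:10-twochords}, $C$ has at most one chord, of distance $4$ or $5$. Assume some singleton $y$ exists and, after rotating, put it on $(c_0,c_5)$. Then $G$ is one of an explicit finite list: $C_{10}+y$, or $C_{10}+y+y'$ with $y'$ on $(c_2,c_7)$ (the only other choice up to the reflection $i\mapsto 5-i$ fixing $y$), each with either no chord or one chord. I would proceed in two stages. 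First I would exclude chords by pendant-path and cycle-length arguments, exactly as in the proof of Lemma~\ref{lem:10-z}. Then I would kill the chordless graphs by path counting.

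\emph{Chords.} Use the symmetries $i\mapsto -i$ and $i\mapsto 5-i$ of the pair $\{0,5\}$; they act on $k$ exactly as in Lemma~\ref{lem:10-twochords}(c).
\begin{itemize}[leftmargin=2em]
\item A distance-$5$ chord $c_kc_{k+5}$ with $k\neq 0$ together with $y$ produces a cycle through $y$ and the chord. Its length is $12$ for $k=\pm 1$ (positions differing by $1$, as in Lemma~\ref{lem:10-mult}(ii)), which is forbidden. For $k=\pm2$ the cycle is an $8$-cycle whose complementary arc gives a pendant path of length $\ge 4$, i.e.\ an $H_{4,4}$.
\item For the chord $c_0c_5$ itself, $y$ closes a triangle. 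Here I expect to exhibit a second $9$-path for a distance-$2$ pair, such as $(c_1,c_3)$ or $(c_6,c_8)$, routed $\ldots c_0\,y\,c_5\ldots$ in place of the $5$-arc or in place of the chord.
\item For a distance-$4$ chord $c_kc_{k+4}$, treat the orbit representatives $k\in\{0,2,3\}$. In each case look for an $H_{3,5}$ (a $6$-cycle through $y$ or through the chord with the rest of $C$ hanging off), or a second $9$-path for a distance-$2$ pair, as in Lemma~\ref{lem:10-z}(ii).
\end{itemize}

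\emph{No chord, one singleton.} $G=C_{10}+y$. The nonadjacent pair $(y,c_1)$ has only the paths $y\,c_0\,c_1$ of length $2$, $y\,c_5c_4c_3c_2c_1$ of length $5$, and $y\,c_5c_6\cdots c_9c_0c_1$ of length $7$. So there is no $9$-path, contradicting \ref{f:obs}.

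\emph{No chord, two singletons.} $y$ on $(c_0,c_5)$ and $y'$ on $(c_2,c_7)$. Count the $x$--$x'$ paths for the nonadjacent pair $(y,y')$. Every such path is $y\,c_i\,(\text{arc})\,c_j\,y'$, possibly rerouting nothing else since $R$ is independent. The arc lengths are in $\{2,3\}$ one way and $\{3,2\}$ plus detours the other way, and I would check that the number of length-$8$ paths is not exactly one. If that count happens to be one, I would instead use a pair such as $(c_1,c_3)$ or $(c_0,c_2)$. For these pairs the $8$-arc is one $9$-path, and the detours $c_0\,y\,c_5$ (which shortens a $5$-arc to $2$) and $c_2\,y'\,c_7$ should give either a second $9$-path or a $9$-cycle $c_0\,y\,c_5c_6c_7\,y'\,c_2c_1c_0$. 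The latter has length $8$, so I would verify the exact lengths carefully.

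The main obstacle I expect is the chord stage, particularly when a chord coexists with two singletons. There, neither a pendant path nor a short cycle may be immediately available, and a specific nonadjacent pair with two $9$-paths must be found by hand. I would first try the distance-$2$ pairs, whose $8$-arc is always one $9$-path, so that a single extra path suffices.
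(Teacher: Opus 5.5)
Your reduction to the finite list and your chordless kills are sound; your list for $(y,c_1)$ misses the path $y\,c_0c_9\cdots c_2c_1$ of length $10$, which is harmless. The plan breaks at the chord $c_0c_5$: there is no way to exclude every chord \emph{before} counting paths. In $C_{10}+y+c_0c_5$ every distance-$2$ pair still has exactly one $9$-path. For example, for $(c_1,c_3)$ the only routes other than $c_1c_2c_3$ are $c_1c_0\,[c_0\to c_5]\,c_5c_4c_3$, with middle segment $c_0c_5$, $c_0\,y\,c_5$ or $c_0c_9c_8c_7c_6c_5$; these have total lengths $4,5,8$. Indeed this graph has cycle lengths only $3,6,7,10$, contains no $H_{k,8-k}$, and has no nonadjacent pair with two $9$-paths at all. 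It dies only because some pair has \emph{no} $9$-path: the paths from $y$ to $c_1$ have lengths $2,3,5,6,7,10$. So $c_0c_5$ must be carried into the counting stage. That is what the paper does: it first kills all other chords and the second singleton, then disposes of $C_{10}+y$ and $C_{10}+y+c_0c_5$ together via $(y,c_1)$.

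The other chord kills you sketch are also off.
\begin{itemize}
\item For distance-$5$ chords you reuse the singleton numbers of Lemma~\ref{lem:10-mult}(ii), but a chord is one edge shorter than a detour through a singleton.
\item With $c_1c_6$, the cycles through $y$ and the chord have lengths $5$ and $11$, not $12$. Kill it instead by the $H_{3,5}$ formed by the $6$-cycle $c_1c_2\cdots c_6c_1$ and the pendant path $c_5\,y\,c_0c_9c_8c_7$. Alternatively, use the paper's two $9$-paths $y\,c_0c_9c_8c_7c_6c_5c_4c_3$ and $y\,c_0c_9c_8c_7c_6c_1c_2c_3$.
\item With $c_2c_7$ there is no $8$-cycle. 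The lengths are $7$ and $9$, and the $9$-cycle $y\,c_0c_9c_8c_7c_2c_3c_4c_5\,y$ kills it outright.
\item For the distance-$4$ chord $c_0c_4$ ($k=0$), neither of your two mechanisms is available. The only even cycles are $C$ and the $4$-cycle $y\,c_0c_4c_5$, whose pendant paths have length at most $4$, and all ten distance-$2$ pairs keep exactly one $9$-path. You need a pair containing $y$, namely $(y,c_3)$, with the two paths $y\,c_0c_9c_8c_7c_6c_5c_4c_3$ and $y\,c_5c_6c_7c_8c_9c_0c_4c_3$.
\item For $k=2,3$ your distance-$2$ idea does work. The pair $(c_2,c_4)$ gains $c_2c_6c_7c_8c_9c_0\,y\,c_5c_4$ for $k=2$, and $c_4c_3c_7c_6c_5\,y\,c_0c_1c_2$ for $k=3$.
\end{itemize}
Finally, a chord coexisting with two singletons is no obstacle. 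The second singleton $y'$ is killed monotonically, for instance by your own fallback pair $(c_1,c_3)$, which gains $c_1c_0\,y\,c_5c_6c_7\,y'\,c_2c_3$ beside its $8$-arc. So $y'$ can be removed before any chord analysis.
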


\begin{proof}
Let $y$ be attached to $(c_0,c_5)$; use the symmetries $i\mapsto -i$ and
$i\mapsto 5-i$.

(i) $y$ kills every distance-$4$ chord $c_kc_{k+4}$ (orbits
$\{0,1,5,6\}$, $\{2,4,7,9\}$, $\{3,8\}$ as in
Lemma~\ref{lem:10-twochords}(c)):
$k=0$, pair $(y,c_3)$: the two $9$-paths
$y\,c_0c_9c_8c_7c_6c_5c_4c_3$ and $y\,c_5c_6c_7c_8c_9c_0c_4c_3$;
$k=2$, pair $(y,c_7)$: $y\,c_0c_1c_2c_3c_4c_5c_6c_7$ and
$y\,c_5c_6c_2c_1c_0c_9c_8c_7$;
$k=3$, pair $(c_2,c_4)$: the $8$-arc $c_2c_1c_0c_9c_8c_7c_6c_5c_4$ and the
second $9$-path $c_4c_3c_7c_6c_5\,y\,c_0c_1c_2$.

(ii) $y$ kills every distance-$5$ chord except $c_0c_5$:
$c_1c_6$, pair $(y,c_3)$: $y\,c_0c_9c_8c_7c_6c_5c_4c_3$ and
$y\,c_0c_9c_8c_7c_6c_1c_2c_3$;
$c_2c_7$, pair $(y,c_3)$: $y\,c_0c_1c_2c_7c_6c_5c_4c_3$ and
$y\,c_0c_9c_8c_7c_6c_5c_4c_3$;
$c_3c_8$ and $c_4c_9$ by the reflection $i\mapsto 5-i$.

(iii) Two distance-$5$ singletons: positions differing by $\pm 1$ in
$\mathbb Z_5$ give a $C_{12}$ (Lemma~\ref{lem:10-mult}(ii)); positions
differing by $2$, say $y'$ on $(c_2,c_7)$, give the distance-$2$ pair
$(c_0,c_8)$ the second $9$-path $c_0\,y\,c_5c_4c_3c_2\,y'\,c_7c_8$. So at
most one distance-$5$ singleton exists.

(iv) Hence if $y$ exists (and no distance-$1$ singleton, by
Lemma~\ref{lem:10-z}(iii)), $G=C_{10}+y$ or $G=C_{10}+y+c_0c_5$. The
nonadjacent pair $(y,c_1)$ has only the paths $y\,c_0c_1$ ($2$),
$y\,c_5c_4c_3c_2c_1$ ($5$), $y\,c_5c_6c_7c_8c_9c_0c_1$ ($7$),
$y\,c_0c_9\cdots c_2c_1$ ($10$), and, if the chord is present, additionally
$y\,c_5c_0c_1$ ($3$) and $y\,c_0c_5c_4c_3c_2c_1$ ($6$); a path entering the
chord and continuing away from $c_1$ dead-ends. No $9$-path. Killed.
\end{proof}

\begin{proposition}\label{prop:l10}
No graph exists in Case $L=10$.
\end{proposition}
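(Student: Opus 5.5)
The plan is to assemble the preceding lemmas of this section and then kill the few remaining chord configurations directly. By Lemma~\ref{lem:10-noK2}, every component of $G[R]$ is a singleton. By Lemmas~\ref{lem:10-attach} and~\ref{lem:10-comp}, each such singleton has exactly two neighbours on $C$, at distance $1$ or $5$. Lemma~\ref{lem:10-z} rules out distance $1$, and Lemma~\ref{lem:10-y} rules out distance $5$. Hence $R=\emptyset$ and $V(G)=V(C)$, so $G$ is the $10$-cycle plus its chords. By Lemmas~\ref{lem:10-chords} and~\ref{lem:10-twochords}, $C$ carries at most one chord, and that chord has distance $4$ or $5$. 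This leaves exactly three candidates up to symmetry: $C_{10}$, $C_{10}+c_0c_4$ and $C_{10}+c_0c_5$.

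Each candidate will be killed by exhibiting a nonadjacent pair whose set of path lengths does not contain $8$ exactly once. In $C_{10}$, the pair $(c_0,c_3)$ has only paths of lengths $3$ and $7$, so it has no $9$-path.

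For $C_{10}+c_0c_5$, the natural test is a distance-$2$ pair whose $8$-arc avoids the chord's endpoints in a way that creates no second route. Consider $(c_1,c_3)$. Its paths are $c_1c_2c_3$, the long arc $c_1c_0c_9\cdots c_4c_3$ of length $8$, and detours through the chord. The detour $c_1c_0c_5c_4c_3$ has length $4$. The detour $c_1c_0c_5c_6\cdots$ would have to return to $c_3$ through $c_4$, but $c_4$ is reachable only from $c_5$ or $c_3$, so it dead-ends. Paths starting $c_1c_2$ reach $c_3$ immediately. So $(c_1,c_3)$ has exactly one $9$-path, and this pair gives no contradiction; I will have to test other pairs.

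I will next try the pair $(c_1,c_6)$, at distance $5$. Its paths have lengths $5$ and $5$ along the two arcs. Through the chord they are $c_1c_0c_5c_6$ (length $3$), $c_1c_2c_3c_4c_5c_0c_9c_8c_7c_6$ (length $9$) and $c_1c_0c_5c_4\ldots$, which dead-ends. None has length $8$, so there is no $9$-path, and $C_{10}+c_0c_5$ is killed.

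For $C_{10}+c_0c_4$, I will try $(c_1,c_6)$ as well. Its paths are $c_1\cdots c_6$ (length $5$) and $c_1c_0c_9c_8c_7c_6$ (length $5$). The chord gives $c_1c_0c_4c_5c_6$ (length $4$) and $c_1c_2c_3c_4c_0c_9c_8c_7c_6$ (length $8$). That is exactly one $9$-path, so this pair fails and a different pair is needed. The next candidate is $(c_2,c_7)$. Its arcs have length $5$ each. The chord routes are $c_2c_3c_4c_0c_9c_8c_7$ (length $6$) and $c_2c_1c_0c_4c_5c_6c_7$ (length $6$). So no path has length $8$ and there is no $9$-path, which kills $C_{10}+c_0c_4$.

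The main obstacle is this final step: for each one-chord graph, finding a pair that actually fails. Some pairs, like $(c_1,c_6)$ above, happen to have exactly one $9$-path, so several pairs may need to be checked. For each check I will verify exhaustively that every path is of the form "arc, then possibly the chord, then arc", which is what makes the enumeration of path lengths complete.
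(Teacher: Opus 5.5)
Your proof is correct and follows the paper's route. You assemble Lemmas~\ref{lem:10-comp}, \ref{lem:10-noK2}, \ref{lem:10-z}, \ref{lem:10-y}, \ref{lem:10-chords} and \ref{lem:10-twochords} to reduce to $C_{10}$, $C_{10}+c_0c_4$ and $C_{10}+c_0c_5$, then kill each by a nonadjacent pair with no $9$-path; the only difference is that the paper uses the single pair $(c_0,c_3)$ for all three graphs (path lengths $\{3,7\}$, $\{2,3,7\}$, $\{3,3,7\}$), whereas your witnesses $(c_1,c_6)$ for $c_0c_5$ and $(c_2,c_7)$ for $c_0c_4$ are equally valid.
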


\begin{proof}
By Lemmas~\ref{lem:10-comp} and~\ref{lem:10-noK2}, $R$ consists of
singletons; by Lemmas~\ref{lem:10-z} and~\ref{lem:10-y}, $R=\emptyset$. So
$V(G)=V(C)$ with at most one chord (Lemma~\ref{lem:10-twochords}), and $G$ is
$C_{10}$, $C_{10}+c_0c_4$ or $C_{10}+c_0c_5$. In each, the distance-$3$ pair
$(c_0,c_3)$ has only paths of lengths $3$ and $7$ (the two arcs) and, with a
chord, additionally of length $2$ or $3$ ($c_0c_4c_3$; $c_0c_5c_4c_3$): every
path leaving $c_0$ through the chord and continuing away from $c_3$
dead-ends. No $9$-path.
\end{proof}

\section{Case \texorpdfstring{$L=4$}{L=4}}\label{sec:l4}

Here $\Forb(4)=\{6,8,9,10,12,14,16\}$. Write $C=[u,x,v,y]$, so $u,v$ are
opposite and $x,y$ are opposite.

\begin{proposition}\label{prop:l4}
No graph exists in Case $L=4$.
\end{proposition}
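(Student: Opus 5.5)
The plan is to exploit the fact that in Case $L=4$ the forbidden set $\Forb(4)=\{6,8,9,10,12,14,16\}$ is very large, so the only allowed cycle lengths up to $16$ are $3,4,5,7,11,13,15$. The unique $9$-paths between the two opposite pairs of $C$ should then pin down a rigid skeleton. Write $C=[u,x,v,y]$. If $uv\in E(G)$ then $u,x,v$ is a triangle and $u,v,y$ is another. I will first check whether $C$ can have a diagonal at all: a diagonal plus a $9$-path tends to create a $6$- or $8$-cycle. The main line assumes $u,v$ nonadjacent (and symmetrically $x,y$ nonadjacent; otherwise I swap the roles of the two pairs).

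\textbf{Step 1: place $x$ and $y$ on $P(u,v)$.} By the even-path lemma (Lemma~\ref{lem:evenpath}, valid since $L\le 8$), the common neighbours $x$ and $y$ are interior vertices of $P(u,v)$. By the symmetry $x\leftrightarrow y$, write
\[
P(u,v)=u\,P_1\,x\,P_2\,y\,P_3\,v,
\]
with segment lengths $a,m,b\ge 1$ and $a+m+b=8$.

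\textbf{Step 2: list the cycles this creates.} The cycle $u\,P_1\,x\,u$ has length $a+1$ when $a\ge2$. The cycle $x\,P_2\,y\,u\,x$ (or via $v$) has length $m+2$. The cycle $y\,P_3\,v\,y$ has length $b+1$. Further cycles are $u\,P_1\,x\,P_2\,y\,u$ of length $a+m+1$, $x\,P_2\,y\,P_3\,v\,x$ of length $m+b+1$, and the long cycle $u\,P(u,v)\,v\,x\,u$ with whichever of $x,y$ is not yet used. Requiring every length to avoid $\Forb(4)$, and every $4$-cycle to have no pendant path of length $6$ (no $H_{2,6}$), should cut the triples $(a,m,b)$ down to a handful. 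For example, $m+2\notin\{6,8,9,10\}$ forces $m\in\{1,2,3,5\}$, and $a+1,b+1\notin\{6,8,9\}$ gives further restrictions. I will then run the same argument for $P(x,y)$, which must contain $u$ and $v$ as interior vertices. Comparing the two, together with the no-twins fact~\ref{f:twins}, should leave only a few explicit skeletons on at most about $15$ vertices.

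\textbf{Step 3: control the remaining vertices.} I will argue as in the $L=10$ case. Every vertex lies within distance at most $5$ of $C$, since otherwise a pendant path of length $6$ at $C$ gives an $H_{2,6}$. More sharply, a vertex far from $C$ attached to a skeleton that already contains many $4$-cycles quickly yields an $H_{2,6}$ or a forbidden cycle, via the pendant-path device applied to the new $4$-cycles. I would then show that any extra vertex or edge either closes a cycle of forbidden length or gives some pair a second $9$-path. The remaining skeletons are then killed by naming, in each, a nonadjacent pair (typically a pair at distance $2$ or $3$ on the skeleton) whose paths have an exhaustively listed set of lengths that omits $8$, or includes $8$ twice.

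\textbf{Expected obstacle.} I expect Step~3 to be the main difficulty: bounding and classifying what may hang off the skeleton without falling back on enumeration. What I would try first is to prove that every vertex of $R$ lies on $P(u,v)\cup P(x,y)$. The argument: a vertex off both paths lies on a chordal path, and rerouting $P(u,v)$ through that chordal path should produce either a second $9$-path between $u$ and $v$ or an even cycle of length between $6$ and $16$. If this fails in general, I would fall back on a case split according to the attachment distance of such a vertex to the skeleton.
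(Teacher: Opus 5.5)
There are two genuine gaps.

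\textbf{The case where both diagonals are present.} Your main line needs a nonadjacent opposite pair of $C$. The swap handles the case of exactly one diagonal, and your Step~2 correctly allows $m=1$ there. But when both diagonals are present, so that $V(C)$ induces $K_4$, you offer only the remark that a diagonal plus a $9$-path ``tends to'' create a $6$- or $8$-cycle. That is not an argument, and $C$ then contains no nonadjacent pair whose $9$-path you could use.

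This sub-case needs a separate idea, as in the paper:
\begin{itemize}
\item A chordal path of length $\ell$ between two clique vertices closes cycles of lengths $\ell+1,\ell+2,\ell+3$, using the paths of lengths $1,2,3$ inside $K_4$. These meet $\Forb(4)$ for $3\le\ell\le 6$, and $\ell\ge 7$ yields an $H_{2,6}$.
\item Hence every chordal path has length $2$. So $R$ is independent and each $r\in R$ has at least two clique neighbours.
\item Since $|R|\ge 5$, any two $r,r'\in R$ are either twins or close a $6$-cycle through the clique.
\end{itemize}

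\textbf{An unnecessary and unproved Step~3.} For $uv\notin E(G)$, your Steps~1--2 follow the paper's route, but your cycle list is one short. That is why you believe a hard Step~3 is needed. Your ``long cycle $u\,P(u,v)\,v\,x\,u$ with whichever of $x,y$ is not yet used'' is not well defined, since $x$ and $y$ both already lie on $P(u,v)$. The cycle you are missing is $u\,P_1\,x\,v\,P_3^{-1}\,y\,u$, of length $a+b+2$.

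Your own constraints already do almost all the work. Requiring $m+2$, $a+1$ (for $a\ge 2$), $b+1$ (for $b\ge 2$), $a+m+1$ and $m+b+1$ to avoid $\Forb(4)$ forces $(a,m,b)\in\{(2,2,4),(4,2,2)\}$. In both, $a+b+2=8$, which is a forbidden $8$-cycle. So this sub-case closes on $C\cup P(u,v)$ alone. You do not need to control the rest of $R$, analyse $P(x,y)$, or use twins or $H_{2,6}$.

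As written, however, your argument rests on Step~3, which is only a plan. In particular, the claim $R\subseteq P(u,v)\cup P(x,y)$ is unproved. So the proposal does not yet establish the proposition.
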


\begin{proof}
\emph{Sub-case 1: $V(C)$ induces a clique $K_4$.} Every vertex of $R$ lies on
a chordal path. A chordal path of length $\ell$ between two vertices of the
clique, which are joined inside $K_4$ by paths of lengths $1,2,3$ internally
disjoint from it, creates cycles of lengths $\ell+1$, $\ell+2$, $\ell+3$. For
$\ell=3,4,5,6$ these sets are $\{4,5,6\}$, $\{5,6,7\}$, $\{6,7,8\}$,
$\{7,8,9\}$, each meeting $\Forb(4)$. For $\ell\ge 7$, a $4$-cycle through
the two endpoints (any two vertices of $K_4$ lie on one) together with the
first six edges of the chordal path forms an $H_{2,6}$. So every chordal path
has length $2$: $R$ is independent and every vertex of $R$ has at least two
neighbours in $V(C)$. Since $|V(G)|\ge 9$ we have $|R|\ge 5$; take
$r,r'\in R$. If $N(r)=N(r')$ they are twins, contradicting \ref{f:twins}.
Otherwise, if some $a\in N(r)\cap N(r')$ exists, pick $b\in N(r)\setminus
\{a\}$ and $c\in N(r')\setminus\{a,b\}$ (possible since $N(r')\ne N(r)$ and
$|N(r')|\ge 2$), and let $d$ be the fourth clique vertex: then
$r\,a\,r'\,c\,d\,b\,r$ is a $6$-cycle. If $N(r)\cap N(r')=\emptyset$, say
$\{a,b\}\subseteq N(r)$ and $\{c,d\}\subseteq N(r')$: then
$r\,a\,c\,r'\,d\,b\,r$ is a $6$-cycle. Both contradict $6\in\Forb(4)$.

\emph{Sub-case 2: some diagonal is missing}, say $uv\notin E(G)$ (the other
diagonal $xy$ may or may not be present). Let $P=P(u,v)$ be the unique
$9$-path. If $P$ missed $x$ then $P+v\,x\,u$ would be a $10$-cycle; likewise
for $y$. So $P$ visits both $x$ and $y$; by the $x\leftrightarrow y$ symmetry
write $P=u\,A\,x\,B\,y\,D\,v$ with segments $A,B,D$ of lengths
$a,b,d\ge 1$, $a+b+d=8$, and all interior vertices in $R$. The following
cycles are available (each internally disjoint from the others by
construction): $A+xu$ ($a+1$); $A+x\,v\,y\,u$ ($a+3$); $D+vy$ ($d+1$);
$D+v\,x\,u\,y$ ($d+3$); $B+y\,u\,x$ ($b+2$); $A+B+yu$ ($a+b+1$); $B+D+vx$
($b+d+1$); $A+xv+D^{-1}+yu$ ($a+d+2$). Avoiding $\Forb(4)$: $a+1,a+3\notin
\Forb(4)$ forces $a\in\{1,2,4\}$, and likewise $d\in\{1,2,4\}$;
$b+2\notin\Forb(4)$ forces $b\in\{1,2,3,5\}$. With $a+b+d=8$ the surviving
triples $(a,b,d)$ are
$(1,5,2)$, $(2,5,1)$, $(1,3,4)$, $(4,3,1)$, $(2,2,4)$, $(4,2,2)$; they are
killed, respectively, by $b+d+1=8$, $a+b+1=8$, $b+d+1=8$, $a+b+1=8$,
$a+d+2=8$, $a+d+2=8$, each an $8$-cycle.
\end{proof}

\section{Case \texorpdfstring{$L=6$}{L=6}}\label{sec:l6}

Here $\Forb(6)=\{8,9,10,12,14,16\}$, and $H_{2,6}$ and $H_{3,5}$ are
forbidden subgraphs. This case has the richest catalogue of degenerate
configurations---components of $G[R]$ several of whose vertices attach to
the same $C$-vertex---and we treat it with particular care: a chain of hand
lemmas classifies the components of $G[R]$ completely, with all degenerate
branches included, and a controlled enumeration finishes the case. Since
$C$ is a $6$-cycle, $H_{3,5}$-freeness means that \emph{no path of length
$5$ leaves $V(C)$ into $R$}; in particular every chordal path has length at
most $4$ (a longer one contains such a path), and $|R|\ge 3$ since
$|V(G)|\ge 9$.

\begin{lemma}\label{lem:6-rules}
\leavevmode
\begin{enumerate}[label=(\roman*),leftmargin=2.6em]
\item A chordal path of length $3$ joins $C$-vertices at distance $2$ or
$3$, or its ends coincide. In particular, for adjacent $u,v\in R$, every
pair $c\in N_C(u)$, $c'\in N_C(v)$ satisfies $c=c'$ or
$\dist(c,c')\in\{2,3\}$.
\item A chordal path of length $4$ joins $C$-vertices at distance exactly
$3$, or its ends coincide. In particular, for a path $u\,w\,v$ in $G[R]$,
every pair $c\in N_C(u)$, $c'\in N_C(v)$ satisfies $c=c'$ or
$\dist(c,c')=3$.
\item There is no chordal path of length $5$ with distinct ends.
\end{enumerate}
\end{lemma}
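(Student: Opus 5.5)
Each part follows from the counting fact \ref{e:attach} together with the pendant-path device, now specialised to $L=6$. Here $\Forb(6)=\{8,9,10,12,14,16\}$, and $H_{2,6}$, $H_{3,5}$ are forbidden. The main tool is the observation already made above: no path of length $5$ leaves $V(C)$ into $R$, i.e.\ there is no $H_{3,5}$. A chordal path of length $\ell$ whose ends are distinct and at $C$-distance $s\in\{1,2,3\}$ creates two cycles, of lengths $\ell+s$ and $\ell+6-s$. Each of these cycles also carries the complementary arc as a pendant path, of length $6-s$ or $s$ respectively. Moreover, any vertex of the chordal path together with the remaining part of the path and an arc of $C$ can give a long pendant path out of $C$.

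For (iii), let $Q$ be a chordal path of length $5$ with distinct ends $c\ne c'$. Append to $Q$ one $C$-edge at $c'$ leading away from $c$; this edge stays inside $C$, so it does not help directly. Instead I would kill $Q$ by cycle lengths. At $s=1$ the cycles have lengths $6$ and $10$, and $10$ is forbidden. At $s=2$ they have lengths $7$ and $9$, and $9$ is forbidden. At $s=3$ both cycles have length $8$, which is forbidden. So every distance is killed. (One could also note that the first four interior edges from $c$ together with the last vertex form a path of length $5$ leaving $C$ into $R$, but that path ends on $C$, so the cycle count is the cleaner argument.)

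For (i), take $\ell=3$ with distinct ends at distance $s$. At $s=1$ the cycles have lengths $4$ and $8$, and $8\in\Forb(6)$, so $s=1$ is excluded. At $s=2$ the lengths are $5$ and $7$, and at $s=3$ both are $6$; both cases are allowed. For the ``in particular'' clause, the path $c\,u\,v\,c'$ is a chordal path of length $3$ whenever $c\ne c'$, so the condition follows.

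For (ii), take $\ell=4$. At $s=1$ the cycles have lengths $5$ and $9$, and $9$ is forbidden. At $s=2$ the lengths are $6$ and $8$, and $8$ is forbidden. At $s=3$ both have length $7$, which is allowed. Again $c\,u\,w\,v\,c'$ is a chordal path of length $4$ when $c\ne c'$, which gives the consequence.

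No step is a real obstacle. The only care needed is to confirm that the paths in the ``in particular'' clauses really are chordal paths. This holds because $u,w,v\in R$ are distinct and the two ends are distinct $C$-vertices, so the path is simple with interior in $R$.
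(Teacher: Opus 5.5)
Your proof is correct and is the paper's own argument: apply \ref{e:attach} with $L=6$, which gives cycle lengths $\ell+s$ and $\ell+6-s$, and exclude exactly those end-distances that produce a length in $\Forb(6)$; the ``in particular'' clauses follow because $c\,u\,v\,c'$ and $c\,u\,w\,v\,c'$ are chordal paths whenever $c\ne c'$. You can drop the pendant-path remarks and the parenthetical in (iii), since they are not needed. The parenthetical is also muddled: a chordal path of length $5$ has only four $R$-vertices, so it never contains a length-$5$ path leaving $C$ into $R$.
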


\begin{proof}
By \ref{e:attach}, a chordal path of length $\ell$ at end-distance
$s\ge 1$ creates cycles of lengths $\ell+s$ and $\ell+6-s$. For $\ell=3$:
$s=1$ gives an $8$-cycle. For $\ell=4$: $s=1$ gives a $9$-cycle, $s=2$ an
$8$-cycle. For $\ell=5$: $s=1,2,3$ give cycles $\{6,10\},\{7,9\},\{8,8\}$,
each containing a forbidden length. Coinciding ends turn the path into a
cycle of length $\ell\le 5$, which is not forbidden as such.
\end{proof}

\begin{lemma}[Split lemma]\label{lem:6-split}
Let $u,v$ be nonadjacent with a common neighbour $w$. Then $w$ is an
interior vertex of $P(u,v)$, and writing $P(u,v)=u\,A\,w\,B\,v$ we have
$|A|\ge 2$ and $|B|\ge 2$; equivalently, the two $P$-neighbours of $w$ lie
in $N(w)\setminus\{u,v\}$.
\end{lemma}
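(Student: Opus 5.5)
The plan is to combine the even-path lemma with a direct cycle-length contradiction. We are in Case $L=6$, so $L\le 8$ and Lemma~\ref{lem:evenpath} applies to the pair $u,v$. The path $u\,w\,v$ is a $u$--$v$ path of even length $2\le 8$. Its only interior vertex is $w$, so $w$ must be an interior vertex of $P(u,v)$. Write $P(u,v)=u\,A\,w\,B\,v$, where $|A|$ and $|B|$ denote the numbers of edges of the two segments, so that $|A|+|B|=8$ and $|A|,|B|\ge 1$.

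It remains to rule out $|A|=1$; the case $|B|=1$ follows by the symmetry $u\leftrightarrow v$. Suppose $|A|=1$, so $w$ is the second vertex of $P(u,v)$. Then $B$ is a $w$--$v$ path of length $7$ that avoids $u$, and the edge $vw$ closes it into a cycle $w\,B\,v\,w$ of length $8$. Since $8\in\Forb(6)$, this is a contradiction. Hence $|A|\ge 2$ and $|B|\ge 2$.

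For the stated equivalence, note that $|A|\ge 2$ says exactly that the $P$-neighbour of $w$ on the $u$-side is not $u$ itself. Likewise $|B|\ge 2$ says the $P$-neighbour on the $v$-side is not $v$. Both of these neighbours lie in $N(w)$, so the condition is the same as saying both lie in $N(w)\setminus\{u,v\}$.

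There is no real obstacle in this argument. The only point to check is that the closing edge $vw$ together with $B$ gives a genuine simple cycle. It does: $B$ is a subpath of $P(u,v)$, and the edge $vw$ is not an edge of $B$, since $|B|=7>1$.
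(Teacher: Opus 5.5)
Your proof is correct and is essentially the paper's argument. Interiority of $w$ comes from the even-path lemma applied to the length-$2$ path $u\,w\,v$, and $|A|=1$ (symmetrically $|B|=1$) is ruled out because $B$ together with the edge $wv$ closes into an $8$-cycle, which is forbidden since $8\in\Forb(6)$.
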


\begin{proof}
$w$ is interior by Lemma~\ref{lem:evenpath}. If $|A|=1$ then $B$ together
with the edge $wv$ is an $8$-cycle, forbidden at $L=6$.
\end{proof}

\begin{lemma}[Corner lemma]\label{lem:6-corner}
Let $u,v$ be nonadjacent.
\begin{enumerate}[label=(\roman*),leftmargin=2.6em]
\item If some common neighbour $w$ of $u$ and $v$ has
$|N(w)\setminus\{u,v\}|\le 1$, then $G$ does not exist.
\item If every neighbour of $u$ is a common neighbour of $u$ and $v$---in
particular if $N(u)\subseteq N(v)$ (\emph{domination})---then $G$ does not
exist.
\end{enumerate}
\end{lemma}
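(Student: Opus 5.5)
Part (i) follows from the Split lemma (Lemma~\ref{lem:6-split}), and part (ii) reduces to part (i). Throughout we are in Case $L=6$, so Lemma~\ref{lem:evenpath} and Lemma~\ref{lem:6-split} are available.

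\emph{Part (i).} Let $w$ be a common neighbour of the nonadjacent pair $u,v$. By Lemma~\ref{lem:6-split}, $w$ is an interior vertex of $P(u,v)$. Write $P(u,v)=u\,A\,w\,B\,v$. The lemma also says that the two $P$-neighbours of $w$ are distinct vertices of $N(w)\setminus\{u,v\}$. Hence $|N(w)\setminus\{u,v\}|\ge 2$, and the hypothesis $|N(w)\setminus\{u,v\}|\le 1$ is contradictory. So $G$ does not exist. This is immediate once Lemma~\ref{lem:6-split} is in hand.

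\emph{Part (ii).} Assume every neighbour of $u$ is also a neighbour of $v$. Since $u\notin N(u)$ and $u\not\sim v$, we have $N(u)\subseteq N(v)\setminus\{u\}$.
\begin{itemize}
\item[$\circ$] By \ref{f:2conn}, $u$ has at least two neighbours.
\item[$\circ$] By \ref{f:twins}, $N(u)\neq N(v)$, so $v$ has a neighbour outside $N(u)$. This fact is not needed below.
\item[$\circ$] Consider $P(u,v)=u\,w_1\,w_2\cdots w_7\,v$. Its second vertex $w_1$ lies in $N(u)$, hence is a common neighbour of $u$ and $v$.
\item[$\circ$] Apply Lemma~\ref{lem:6-split} to $w_1$. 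Then $w_1$ must be preceded on $P(u,v)$ by a segment $A$ with $|A|\ge 2$. But $w_1$ is the immediate successor of $u$, so $|A|=1$.
\end{itemize}
This is a contradiction, so no such $G$ exists. Equivalently, the edges $u w_1$ and $w_1 v$ together with the subpath $w_1\cdots w_7 v$ of $P(u,v)$ close an $8$-cycle $w_1w_2\cdots w_7\,v\,w_1$. This cycle is forbidden because $8\in\Forb(6)$.

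\emph{Main obstacle.} The only subtle point is making sure that the ``interior, with $|A|,|B|\ge2$'' conclusion of Lemma~\ref{lem:6-split} applies to \emph{whichever} common neighbour $P(u,v)$ happens to use. In (ii) this is automatic, because the first edge of $P(u,v)$ goes from $u$ to a vertex of $N(u)\subseteq N(v)$. Beyond that I expect no real difficulty: the whole statement is a packaging of the $8$-cycle argument behind Lemma~\ref{lem:6-split}.
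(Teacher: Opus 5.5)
Your proof is correct and follows the paper's argument: (i) is read off from the Split lemma, since the two $P(u,v)$-neighbours of $w$ are distinct and lie in $N(w)\setminus\{u,v\}$, and (ii) applies the Split lemma to the first vertex of $P(u,v)$ after $u$, which is a common neighbour lying at distance $1$ from $u$ along the path. One small inaccuracy: your opening says (ii) reduces to (i), but your actual argument (like the paper's) uses the $|A|\ge 2$ clause of the Split lemma directly rather than part (i), and your $8$-cycle $w_1w_2\cdots w_7\,v\,w_1$ uses only the edge $w_1v$, not $uw_1$.
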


\begin{proof}
(i) By Lemma~\ref{lem:6-split}, $w$ needs two $P(u,v)$-neighbours in
$N(w)\setminus\{u,v\}$. (ii) The first vertex of $P(u,v)$ after $u$ is a
neighbour of $u$, hence a common neighbour, hence by
Lemma~\ref{lem:6-split} an interior vertex not $P$-adjacent to $u$---a
contradiction.
\end{proof}

\begin{theorem}[No deep vertices]\label{thm:6-nodeep}
Every vertex of $R$ has a neighbour on $C$.
\end{theorem}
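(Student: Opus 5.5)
We argue by contradiction: suppose some $r\in R$ has no neighbour on $C$, and call such a vertex \emph{deep}. The plan is to use the local constraints of the case $L=6$, namely $H_{3,5}$-freeness and Lemma~\ref{lem:6-rules}, to pin down the neighbourhood of $r$ almost completely. We then finish with the corner lemma (Lemma~\ref{lem:6-corner}), applied to a nonadjacent pair one of whose members has a too-small neighbourhood.

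First we bound how far $r$ can lie from $C$. A shortest path from $C$ to $r$ followed by any further step into $R$ is a path leaving $V(C)$ into $R$. Since no such path has length $5$, every vertex of $R$ is at distance at most $4$ from $C$, and every path starting at $C$ and running inside $R$ has length at most $4$. By $2$-connectivity and the fan lemma, $r$ lies on a chordal path $c\,u_1\cdots u_k\,c'$ with $r$ interior and $r\ne u_1,u_k$; hence $k\ge 3$. Lemma~\ref{lem:6-rules} restricts this path to length $4$ or $5$. Length $5$ would need coinciding ends, and then it is a $5$-cycle hanging at $c$, with $C$ attached; extending around $C$ then gives a pendant path of length $\ge 5$ from the cycle, or a path of length $5$ into $R$. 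Length $4$ gives $c\,u\,r\,v\,c'$, where $c=c'$ or $\dist(c,c')=3$.

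We treat the two cases separately. If $\dist(c,c')=3$, we get an $8$-cycle through the $3$-arc, unless one excludes it. Indeed, the cycles $4+3$ and $4+3$ have length $7$, which is allowed, so this case is not immediately dead. If $c=c'$, then $u$ and $v$ are both neighbours of $r$ and of $c$. Next we show that $N(r)$ is small. Every neighbour $w$ of $r$ lies in $R$. A second $R$-neighbour of $u$ other than $r$, continued further, would create a path of length $5$ into $R$ or a forbidden chordal path. From this we aim to show that $N(r)=\{u,v\}$, or more generally that every neighbour of $r$ is a common neighbour with some fixed vertex. Then Lemma~\ref{lem:6-corner}(ii) applies: if $N(r)\subseteq N(c)$ with $c\,r$ nonadjacent, $G$ does not exist. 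In the $\dist=3$ case, $N(r)=\{u,v\}$ with $u\sim c$ and $v\sim c'$. Here I would instead take the pair $(u,v)$ with common neighbour $r$, where $|N(r)\setminus\{u,v\}|=0$, and apply Lemma~\ref{lem:6-corner}(i), provided $u\not\sim v$. If $u\sim v$, then $c\,u\,v\,c'$ is a chordal path of length $3$ at distance $3$, and the pair $(r,c)$ should be killable via the split lemma.

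The main obstacle is the step that shows $N(r)$ has no neighbour beyond the two on the chordal path. A deep vertex may have several $R$-neighbours, each attaching to $C$ at different points. I would classify these neighbours using Lemma~\ref{lem:6-rules}(ii) applied pairwise to paths $x\,r\,y$ inside $G[R]$. This forces all $C$-attachments of the $R$-neighbours of $r$ to lie in a single vertex or a single antipodal pair $\{c,c+3\}$. The cycles this produces through $r$ should then yield either a forbidden length in $\{8,9,10\}$ or domination $N(r)\subseteq N(x)$ for a suitable vertex $x$ nonadjacent to $r$, for instance another deep vertex or a $C$-vertex two steps away. If this does not close directly, I would fall back on a short case split on $|N(r)|\in\{2,3\}$. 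Larger degree is excluded by the antipodal constraint together with the twin condition \ref{f:twins}.
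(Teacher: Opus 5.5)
Your opening is fine once you use the fact that the fan lemma gives two paths with \emph{distinct} ends. Both sides then have length exactly $2$ and $\dist(c,c')=3$, so the coinciding-ends branches never arise. That is fortunate, because your exclusion of a length-$5$ chordal path with coinciding ends is not justified: a $5$-cycle with a pendant path is not a forbidden configuration. Your use of Lemma~\ref{lem:6-corner}(i) to force $u\sim v$ once $N(r)=\{u,v\}$ is known is exactly right. But the step you call the main obstacle is not carried out, and your sketch aims at the wrong target. Write $c=c_0$, $c'=c_3$. An extra neighbour $t\in R$ of $r$ must be handled as follows. First, $t\not\sim u,v$, since otherwise $c_3\,v\,r\,t\,u\,c_0$ or its mirror is a chordal path of length $5$ with distinct ends. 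Next, $N_C(t)\subseteq\{c_0,c_3\}$. Further, $t$ is not itself deep, because its own fan yields a length-$5$ chordal path through $r$; your sketch never considers this case. Also, $t$ has no $R$-neighbour other than $r$. Only then comes the kill, and it is applied to $t$, not to $r$. If $N(t)=\{r,c_0\}$ or $\{r,c_3\}$, then $t$ is dominated by $u$ or $v$ (Lemma~\ref{lem:6-corner}(ii)). If $N(t)=\{r,c_0,c_3\}$, then $t$ is unique by \ref{f:twins} and is killed by Lemma~\ref{lem:6-corner}(i) on the pair $(t,u)$ through $r$. Looking for a domination $N(r)\subseteq N(x)$ of $r$ itself does not work in general.

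The more serious gap is the case $u\sim v$, which you leave at ``the pair $(r,c)$ should be killable via the split lemma.'' As stated this fails. Lemma~\ref{lem:6-split} at $u$ for the pair $(r,c_0)$ only needs two neighbours of $u$ outside $\{r,c_0\}$. The vertex $v$ together with any further neighbour of $u$ (for instance $c_3$, or another $R$-vertex) supplies them, so no contradiction follows. The missing idea is \emph{confinement} of the triangle $K=\{u,v,r\}$. You must prove $N(u)\subseteq\{c_0,c_3,v,r\}$ and $N(v)\subseteq\{c_0,c_3,u,r\}$:
\begin{itemize}
\item extra $C$-neighbours are excluded by Lemma~\ref{lem:6-rules}(ii);
\item an extra $R$-neighbour of $u$ is forced by the length-$5$ arguments to have neighbourhood $\{u,c_3\}$ or $\{u,v\}$, and it is then dominated by $v$ or by $r$.
\end{itemize}
The paper then finishes globally. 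If $c_0\not\sim c_3$, Lemma~\ref{lem:evenpath} applied to $c_0\,u\,r\,v\,c_3$ forces $P(c_0,c_3)$ into $K$, which is too small. Otherwise, a $c_0$--$c_3$ path of length $5$ or $6$ in $G-K$ would extend to a $9$-path between the adjacent vertices $u,r$ or $u,v$. So no such path exists, and $(r,c_0)$ has no $9$-path. Alternatively, after confinement your corner-lemma idea does close the case, but only with a case-dependent choice of pair. If $u\sim c_3$ (resp.\ $v\sim c_0$), then $N(r)\subseteq N(c_3)$ (resp.\ $N(c_0)$) and Lemma~\ref{lem:6-corner}(ii) applies. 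Otherwise $N(u)=\{c_0,v,r\}$, and Lemma~\ref{lem:6-corner}(i) applies to $(r,c_0)$.
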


\begin{proof}
Suppose $w\in R$ has none. By the fan lemma there are two paths from $w$
to $V(C)$, disjoint except at $w$, each meeting $V(C)$ exactly in its
endpoint, and with distinct endpoints; their union is a chordal path
through $w$ with distinct ends, of length at most $4$
(Lemma~\ref{lem:6-rules}(iii) and $H_{3,5}$), and since $w$ has no
$C$-neighbour both sides have length exactly $2$. By
Lemma~\ref{lem:6-rules}(ii) its ends are at distance $3$: relabel so that
the path is $c_0\,u\,w\,v\,c_3$ with $u,v\in R$, $u\sim c_0$, $v\sim c_3$.

\emph{Step 1: $N(w)=\{u,v\}$.} Let $t\in N(w)\setminus\{u,v\}$; then
$t\in R$ ($w$ has no $C$-neighbours).

(a) $t\not\sim u$ and $t\not\sim v$: otherwise $c_3\,v\,w\,t\,u\,c_0$
(resp.\ its mirror) is a chordal path of length $5$ with distinct ends,
dead by Lemma~\ref{lem:6-rules}(iii).

(b) $N_C(t)\subseteq\{c_0,c_3\}$: for $c_k\in N_C(t)$, the paths
$c_k\,t\,w\,u\,c_0$ and $c_k\,t\,w\,v\,c_3$ are chordal paths of length
$4$, so by Lemma~\ref{lem:6-rules}(ii) each forces $c_k$ into
$\{c_0,c_3\}$.

(c) $t$ is not deep: if $N_C(t)=\emptyset$, then $t$'s own fan (as for
$w$) provides $x\in N(t)$, $x$ adjacent to $C$, realising an attachment
vertex $c\in N_C(x)$, and a second fan neighbour realising an attachment
at distance $3$ from $c$; choosing the fan neighbour $x$ whose attachment
$c$ differs from some $c''\in\{c_0,c_3\}$ (possible since the two fan
attachments are distinct) and the corresponding $y\in\{u,v\}$ with
$y\sim c''$, the path $c\,x\,t\,w\,y\,c''$ is a chordal path of length
$5$ with distinct ends: dead. (Here $x\notin\{u,v\}$ by (a) and
$x\ne w$ since $x$ is adjacent to $C$.)

(d) $N(t)\subseteq\{w,c_0,c_3\}$: let $s\in N(t)\setminus\{w,c_0,c_3\}$.
If $s\in C$ this contradicts (b). So $s\in R$; $s\notin\{u,v\}$ by (a).
Every $C$-neighbour $c_k$ of $s$ dies: of the two paths
$c_k\,s\,t\,w\,u\,c_0$ and $c_k\,s\,t\,w\,v\,c_3$, at least one has
distinct ends (as $c_0\ne c_3$) and is a length-$5$ chordal path. So
$N_C(s)=\emptyset$. Every further $R$-neighbour $s'$ of $s$ gives the path
$c_0\,u\,w\,t\,s\,s'$ of length $5$ leaving $C$ (if $s'\in\{u,v\}$, use
$c_3\,v\,w\,t\,s\,u$ or its mirror instead), contradicting
$H_{3,5}$-freeness. Hence $N(s)\subseteq\{t\}$ and $\deg(s)=1$,
contradicting $2$-connectivity.

(e) By (b)--(d), $N(t)\subseteq\{w,c_0,c_3\}$ with $N_C(t)\neq\emptyset$.
If $N(t)=\{w,c_0\}$, then $t$ is dominated by $u$ ($u\sim w$, $u\sim c_0$,
$t\not\sim u$): dead by Lemma~\ref{lem:6-corner}(ii). If $N(t)=\{w,c_3\}$:
dominated by $v$. If $N(t)=\{w,c_0,c_3\}$: two such vertices would be
twins, so $t$ is the only member of $N(w)\setminus\{u,v\}$, and then $t,u$
are nonadjacent (by (a)) with common neighbour $w$ satisfying
$N(w)\setminus\{t,u\}=\{v\}$: dead by Lemma~\ref{lem:6-corner}(i). So no
$t$ exists.

\emph{Step 2: $u\sim v$.} Otherwise $u,v$ are nonadjacent with common
neighbour $w$ and $N(w)\setminus\{u,v\}=\emptyset$:
Lemma~\ref{lem:6-corner}(i).

\emph{Step 3: the triangle $u\,v\,w$ is bare and confined.} Any
$R$-neighbour $r$ of $u$ outside $\{v,w\}$ has, by the arguments of (d)
applied to the path $c_3\,v\,w\,u\,r$ (of length $4$), no $R$-neighbours
other than $u$ and $N_C(r)\subseteq\{c_3\}$ (the path
$c_k\,r\,u\,w\,v\,c_3$ kills every other attachment); so
$N(r)=\{u,c_3\}$, and $r$ is dominated by $v$ ($v\sim u$ by Step~2,
$v\sim c_3$): dead. So $N(u)\subseteq\{c_0,c_3,v,w\}$, and likewise
$N(v)\subseteq\{c_0,c_3,u,w\}$; moreover $N_C(u),N_C(v)\subseteq
\{c_0,c_3\}$ by Lemma~\ref{lem:6-rules}(ii) applied to
$c_k\,u\,w\,v\,c_3$.

\emph{Step 4: the kill.} Let $K=\{u,v,w\}$; then
$N(K)\setminus K\subseteq\{c_0,c_3\}$. If $c_0\not\sim c_3$ (no chord):
the path $c_0\,u\,w\,v\,c_3$ has even length $4$ and interior in $K$, so
by Lemma~\ref{lem:evenpath} the $9$-path $P(c_0,c_3)$ meets the interior
of $K$; but $P$ can enter and leave $K$ only at its own endpoints
$c_0,c_3$, so $P$ would run $c_0\,(\text{path in }K)\,c_3$ with at most
$4$ edges---impossible for a $9$-path. If the chord $c_0c_3$ is present:
let $p_5$ (resp.\ $p_6$) be the number of paths of length $5$ (resp.\
$6$) from $c_0$ to $c_3$ in $G-K$. Since $u\sim v$: every such $6$-path
extends to a $9$-path $u\,c_0\cdots c_3\,v$ between the adjacent pair
$(u,v)$, so $p_6=0$; since $u\sim w$: the extension
$u\,c_0\cdots c_3\,v\,w$ gives $p_5=0$. But every $9$-path from $w$ to
$c_0$ leaves $w$ into $\{u,v\}$ and reaches $c_0$ through the routes
$w\,u\,v\,c_3\cdots c_0$, $w\,v\,c_3\cdots c_0$ (plus, if $u\sim c_3$ or
$v\sim c_0$, the analogous routes through the other chord end, each again
requiring a $5$- or $6$-path from $c_3$ to $c_0$ in $G-K$): in every case
the count is $p_5+p_6$ or a sum of such terms, i.e.\ $0$. So the
nonadjacent pair $(w,c_0)$ has no $9$-path, contradicting \ref{f:obs}.
\end{proof}

We now classify the components of $G[R]$; by
Theorem~\ref{thm:6-nodeep} every $R$-vertex is attached. Call two
$C$-vertices at distance $3$ an \emph{antipodal pair}; the three antipodal
pairs partition $V(C)$, so two antipodal pairs that share a vertex are
equal.

\begin{lemma}[Singletons and $K_2$s]\label{lem:6-objects}
\leavevmode
\begin{enumerate}[label=(\roman*),leftmargin=2.6em]
\item A singleton component $x$ has $|N_C(x)|\ge 2$, and $N_C(x)$ is a
clique of $G[C]$ or $|N_C(x)|\ge 4$.
\item In a $K_2$-component $uv$, each end satisfies: $N_C(\text{end})$ is
a clique of $G[C]$ contained in the other end's attachment set, or
$|N_C(\text{end})|\ge 3$. Cross pairs obey Lemma~\ref{lem:6-rules}(i).
\end{enumerate}
\end{lemma}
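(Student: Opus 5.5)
The whole lemma should follow from the Corner lemma (Lemma~\ref{lem:6-corner}(i)) applied with the $R$-vertex itself as the common neighbour $w$. The point is that a vertex of a small component of $G[R]$ has very few neighbours besides its $C$-neighbours, so $|N(w)\setminus\{u,v\}|\le 1$ is easy to arrange. Throughout I use Theorem~\ref{thm:6-nodeep}, so every $R$-vertex has at least one $C$-neighbour, and $2$-connectivity~\ref{f:2conn}, so every vertex has degree at least $2$.

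\emph{Part (i).} For a singleton component $x$ we have $N(x)=N_C(x)$, so $|N_C(x)|\ge 2$ by $2$-connectivity. Suppose $N_C(x)$ is not a clique of $G[C]$, and pick nonadjacent $a,b\in N_C(x)$. Then $a,b$ is a nonadjacent pair with common neighbour $x$, and $N(x)\setminus\{a,b\}=N_C(x)\setminus\{a,b\}$. If $|N_C(x)|\le 3$ this set has at most one element, so Lemma~\ref{lem:6-corner}(i) kills $G$. Hence either $N_C(x)$ is a clique or $|N_C(x)|\ge 4$, as claimed.

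\emph{Part (ii).} Let $uv$ be a $K_2$-component, so $N(u)=N_C(u)\cup\{v\}$ and $N_C(u)\ne\emptyset$. Assume $|N_C(u)|\le 2$; we show $N_C(u)$ is a clique contained in $N_C(v)$.
\begin{itemize}[leftmargin=2em]
\item \emph{Clique.} If $N_C(u)=\{a,b\}$ with $a\not\sim b$, then $u$ is a common neighbour of the nonadjacent pair $(a,b)$ with $N(u)\setminus\{a,b\}=\{v\}$. This is fatal by Lemma~\ref{lem:6-corner}(i).
\item \emph{Containment.} If some $c\in N_C(u)$ is not adjacent to $v$, then $(c,v)$ is a nonadjacent pair. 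Its common neighbour $u$ has $N(u)\setminus\{c,v\}=N_C(u)\setminus\{c\}$, of size at most $1$. Again Lemma~\ref{lem:6-corner}(i) kills $G$.
\end{itemize}
So either $|N_C(u)|\ge 3$, or $N_C(u)$ is a clique contained in $N_C(v)$; the same holds for $v$ by symmetry. Finally, for $c\in N_C(u)$ and $c'\in N_C(v)$, the walk $c\,u\,v\,c'$ is a chordal path of length $3$ (or closes up when $c=c'$). Lemma~\ref{lem:6-rules}(i) then gives the cross-pair condition directly.

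\emph{Expected obstacle.} The only real care needed is the degenerate bookkeeping: checking that the named pairs really are nonadjacent and distinct. Here $c\ne v$ because $c\in C$ and $v\in R$, and $a\not\sim b$ holds by the choice of $a,b$. One must also check that the count $|N(w)\setminus\{u,v\}|$ is computed using the whole neighbourhood, including the $R$-neighbour $v$ in part (ii); this is exactly why the threshold there is $3$ rather than $4$.
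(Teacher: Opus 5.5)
Your proof is correct and is essentially the paper's argument. The paper applies the Split lemma directly: a common neighbour of a nonadjacent pair has degree at least $4$. It phrases (ii) as ``$N(u)$ is a clique or $\deg(u)\ge 4$'', which is exactly what Lemma~\ref{lem:6-corner}(i) packages, and your separate clique and containment checks simply unpack that single condition.
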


\begin{proof}
(i) $|N_C(x)|=\deg(x)\ge 2$ by $2$-connectivity. If $N_C(x)$ is not a
clique, take nonadjacent $p,q\in N_C(x)$: by Lemma~\ref{lem:6-split}, $x$
is interior to $P(p,q)$ with two $P$-neighbours outside $\{p,q\}$, so
$\deg(x)\ge 4$. (ii) The same argument at an end $u$ (whose
neighbourhood is $N_C(u)\cup\{v\}$) gives: $N(u)$ is a clique---i.e.\
$N_C(u)$ is a $G[C]$-clique each of whose vertices is adjacent to $v$,
i.e.\ contained in $N_C(v)$---or $\deg(u)\ge 4$, i.e.\ $|N_C(u)|\ge 3$.
\end{proof}

\begin{lemma}[Larger components are confined]\label{lem:6-confined}
Every component of $G[R]$ on at least $3$ vertices has all its attachment
vertices inside a single antipodal pair $\{a,a+3\}$, \emph{and} the chord
$a\,a{+}3$ is present. Moreover stars $K_{1,m}$ with $m\ge 3$ do not occur.
\end{lemma}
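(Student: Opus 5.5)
Let $K$ be a component of $G[R]$ with $|K|\ge 3$. By Theorem~\ref{thm:6-nodeep} every vertex of $K$ has a $C$-neighbour. The plan has four steps.

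\emph{Step 1: all attachments lie in one antipodal pair.} Since $K$ is connected on at least three vertices, it contains a path $u\,w\,v$. Lemma~\ref{lem:6-rules}(ii) says that any $c\in N_C(u)$ and $c'\in N_C(v)$ are equal or antipodal. Lemma~\ref{lem:6-rules}(i) says that adjacent vertices have attachments equal or at distance $2$ or $3$. To fix a pair, take any vertex $x\in K$ with some $c\in N_C(x)$. Every vertex at $K$-distance exactly $2$ from $x$ then has all its attachments in $\{c,c+3\}$.

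To propagate this, I use that $K$ has no path of length $\ge 4$. Indeed, a path of length $4$ inside $K$, preceded by an attachment edge at one end, is a path of length $5$ leaving $V(C)$, which contradicts $H_{3,5}$-freeness. A path of length $3$ inside $K$ with attachments at both ends is a chordal path of length $5$, which Lemma~\ref{lem:6-rules}(iii) allows only with equal ends. So $K$ has small diameter, and every vertex has all its attachments equal to those of its distance-$3$ partners.

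For neighbours $x\sim y$ in $K$, I pick a third vertex $z$ adjacent to exactly one of them, which exists since $|K|\ge3$ and $K$ is connected. Then $z$ is at distance $2$ from the other through a path, and applying (ii) from both sides pins both $N_C(x)$ and $N_C(y)$ into the antipodal pair determined by $N_C(z)$. When $x,y,z$ form a triangle with no other route, I instead use the path $x\,z\,y$ directly, since $x$ and $y$ are then ends of a $2$-path. In that case (ii) forces $N_C(x)\cup N_C(y)$ into a single antipodal pair, and so into one pair $\{a,a+3\}$ overall, because antipodal pairs sharing a vertex are equal. This bookkeeping, covering both the case where $K$ has a vertex with attachments on both ends and the case where it does not, is the main obstacle. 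I expect to handle it by a small case split on whether $K$ contains a triangle.

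\emph{Step 2: the chord $c_ac_{a+3}$ is present.} Suppose not. The component $K$ with its attachment vertices $\{c_a,c_{a+3}\}$ then contains an even-length path between $c_a$ and $c_{a+3}$ through $K$ of length $2$ or $4$. If some vertex attaches to both ends, the path has length $2$. Otherwise take $c_a\,u\,w\,v\,c_{a+3}$, which has length $4$ and exists because, by $2$-connectivity and Step~1, both $c_a$ and $c_{a+3}$ are attached. By Lemma~\ref{lem:evenpath}, $P(c_a,c_{a+3})$ must meet $K$. But $N(K)\setminus K\subseteq\{c_a,c_{a+3}\}$, so $P$ runs entirely inside $K\cup\{c_a,c_{a+3}\}$. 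That subgraph has no path of length $8$, by the length bounds above (at most $|K|+1$ edges, and $K$ has no $5$-path), a contradiction. This is exactly the argument of Step~4 of Theorem~\ref{thm:6-nodeep}.

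\emph{Step 3: no stars $K_{1,m}$ with $m\ge3$.} Let the centre be $w$ with leaves $\ell_1,\dots,\ell_m$. Each leaf has neighbourhood $\{w\}\cup N_C(\ell_i)$ with $N_C(\ell_i)\subseteq\{c_a,c_{a+3}\}$, and distinct leaves are non-twins by \ref{f:twins}. So the leaves realise distinct neighbourhoods among $\{w,c_a\}$, $\{w,c_{a+3}\}$ and $\{w,c_a,c_{a+3}\}$, which forces $m\le 3$. If $m=3$, the leaf with neighbourhood $\{w,c_a\}$ is dominated by the leaf with neighbourhood $\{w,c_a,c_{a+3}\}$. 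The two are nonadjacent, so Lemma~\ref{lem:6-corner}(ii) kills the configuration.
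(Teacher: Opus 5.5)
Your proposal has two genuine gaps: Step~1 (confinement) does not go through with the tools you use, and Step~2 does not justify the even path it needs.

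\textbf{Step~1.} The tools you invoke are Lemma~\ref{lem:6-rules}(i)--(iii), Theorem~\ref{thm:6-nodeep} and $H_{3,5}$-freeness. These are local distance rules, and they do not force confinement. Take $K=u_1\,w\,u_2\cong K_{1,2}$ with $N_C(u_1)=N_C(u_2)=\{c_0\}$ and $N_C(w)=\{c_2\}$. This satisfies everything you use:
both edges join attachments at distance $2$, which (i) allows;
the ends of the only $3$-vertex path share their attachment, which (ii) allows;
$K$ has no $P_4$, and no path of length $5$ leaves $C$.
Yet $K$ is not confined. It is also triangle-free, so your planned case split on triangles will not reach it.

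This is exactly where ``apply (ii) from both sides'' fails. The centre of a $K_{1,2}$ is the end of no $3$-vertex path in $K$, so only (i) constrains it. Rule (i) pins it into an antipodal pair only if its neighbours' attachments already contain both $c$ and $c{+}3$. What excludes the configuration is a non-local fact: $u_1,u_2$ are twins (\ref{f:twins}), or more generally Lemma~\ref{lem:6-corner}(ii) applies.

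That is why the paper handles stars \emph{first}. For two leaves, the corner lemma supplies some $c^*\in N_C(u_i)\setminus N_C(u_j)$. Then (ii) gives $N_C(u_i)=\{c^*\}$ and $N_C(u_j)=\{c^*{+}3\}$. The centre is then pinned because the two edge conditions are complementary ($d+d'=3$). In the $P_4$ case the paper separately rules out an edge joining attachments $\{c_0\}$ and $\{c_2\}$, using the triangle and $P_4$ arguments plus domination or twins when $\deg_K(z)=1$.

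Your Step~3 (at most three distinct leaf neighbourhoods, then domination when $m=3$) is a correct and tidy alternative to the paper's star argument. However, it presupposes confinement, so it inherits the gap.

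\textbf{Step~2.} Knowing that $c_a$ and $c_{a+3}$ both receive attachments gives you $u\sim c_a$ and $v\sim c_{a+3}$ in $K$. It does not give you a common neighbour $w$ of $u$ and $v$. If $u\sim v$, you only have the odd path $c_a\,u\,v\,c_{a+3}$, to which Lemma~\ref{lem:evenpath} does not apply.

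A parity argument is needed, as in the paper:
a triangle gives paths of both parities;
in a bipartite $K$ containing a $P_4$, the ends of the $P_4$ lie in opposite colour classes and share their single attachment vertex, so some $c_a$-attached vertex and some $c_{a+3}$-attached vertex lie in the same class, hence at $K$-distance exactly $2$;
for $K_{1,2}$, the one configuration with only odd routes (both leaves attached only to $c_a$, centre only to $c_{a+3}$) is again excluded only by twin-freeness.

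The rest of Step~2 is correct: $P(c_a,c_{a+3})$ would have to run through $K$, and $K$ has no path on five vertices.
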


\begin{proof}
Let $K$ be a component with $|K|\ge 3$.

\emph{Stars.} Suppose $K=K_{1,m}$ with centre $z$ and leaves
$u_1,\dots,u_m$, $m\ge 2$. Two leaves $u_i,u_j$ are nonadjacent with
common neighbour $z$; by Lemma~\ref{lem:6-corner}(ii) some
$c^*\in N_C(u_i)$ is not a common neighbour, i.e.\
$c^*\notin N_C(u_j)$; by Lemma~\ref{lem:6-rules}(ii) (the path
$c^*u_i\,z\,u_j\,c'$) every $c'\in N_C(u_j)$ is at distance $3$ from
$c^*$, whence $N_C(u_j)=\{c^*{+}3\}$, and symmetrically
$N_C(u_i)=\{c^{*}\}$: the leaves carry antipodal singleton attachments.
For $m\ge 3$, the attachment of a third leaf would have to be antipodal to
both $\{c^*\}$ and $\{c^*{+}3\}$ simultaneously---impossible. For $m=2$:
$z$'s attachments lie at distance $\{0,2,3\}$ from $c^*$ and from
$c^*{+}3$ (Lemma~\ref{lem:6-rules}(i)); since the two distance conditions
are complementary ($d+d'=3$ on $C_6$), only $d\in\{0,3\}$ survives, so
$N_C(z)\subseteq\{c^*,c^*{+}3\}$; and $z$, having the nonadjacent
neighbours $u_1,u_2$, needs by Lemma~\ref{lem:6-split} two further
neighbours, so $N_C(z)=\{c^*,c^*{+}3\}$. Thus $K_{1,2}$ is confined to
the pair $\{c^*,c^*{+}3\}$.

\emph{Triangles.} In a triangle each pair of vertices are the ends of a
path on three vertices through the third, so by
Lemma~\ref{lem:6-rules}(ii) their attachment sets lie in a common
antipodal pair; the three pairwise pairs share the nonempty attachment
sets, hence coincide: confined.

\emph{Components containing a $P_4$.} First, the ends $p,q$ of any path
on four vertices in $G[R]$ satisfy $N_C(p)=N_C(q)=\{c\}$ for a single
vertex $c$: any $c_p\in N_C(p)$, $c_q\in N_C(q)$ with $c_p\ne c_q$ would
give a length-$5$ chordal path with distinct ends
(Lemma~\ref{lem:6-rules}(iii)). Now suppose some edge $yz$ of $K$ joins
vertices whose attachment sets lie in different antipodal pairs $P\neq Q$.
Since distinct pairs realise cross-distances $1$ and $2$ only, and
distance $1$ is forbidden across an edge (Lemma~\ref{lem:6-rules}(i)),
$N_C(y)$ and $N_C(z)$ are singletons at distance $2$, say $\{c_0\}$ and
$\{c_2\}$. If $\deg_K(z)=1$: $K\ge 3$ gives $y$ a further neighbour $w$,
and $(z,y,w)$ is a $3$-vertex path, so $N_C(w)$ lies in a pair with
$\{c_2\}$, while the edge $yw$ forbids distance $1$ from $c_0$: so
$N_C(w)=\{c_2\}$; if $w\sim z$ they form a triangle with $y$, which is
confined---contradicting $c_0,c_2$ in one pair; if $w\not\sim z$ then
$N(z)=\{c_2,y\}\subseteq N(w)\cup\{w\}$... more precisely $z$ is
dominated by $w$ if $N(z)\subseteq N(w)$, and $N(z)=\{c_2,y\}$ with
$w\sim c_2$, $w\sim y$: dead by Lemma~\ref{lem:6-corner}(ii) (if
$N(w)=\{c_2,y\}$ exactly, $w$ and $z$ are twins: dead by \ref{f:twins}).
If $\deg_K(z)\ge 2$ and $\deg_K(y)\ge 2$: choose neighbours $v'\ne z$ of
$y$ and $w'\ne y$ of $z$; if $v'=w'$ we have a triangle on $y,z,v'$,
confined: contradiction. Otherwise $v'\,y\,z\,w'$ is a path on four
vertices, so $N_C(v')=N_C(w')=\{c\}$; but the $3$-vertex paths
$(v',y,z)$ and $(y,z,w')$ put $c$ in a pair with $c_2$ and with $c_0$
respectively, forcing $c\in\{c_2,c_5\}\cap\{c_0,c_3\}=\emptyset$:
contradiction. So no such edge exists: all attachment sets of $K$ lie in
antipodal pairs that agree across every edge, and by connectivity in a
single pair $\{a,a{+}3\}$.

\emph{The chord is forced.} Suppose $K$ is confined to $\{a,a+3\}$ and
$a\not\sim a{+}3$. Both $a$ and $a{+}3$ receive attachments (one alone
would be a cut vertex), and $K$ contains an $a$--$(a{+}3)$ path of even
length with interior in $K$: if some vertex of $K$ is attached to both,
it provides one of length $2$; otherwise pick adjacent classes---in a
non-bipartite $K$ (a triangle) paths of both parities exist between any
two vertices; in a bipartite $K$, the ends of any path on four vertices
lie in opposite classes and share their attachment vertex, so
$a$-attached vertices appear in both classes (as do $(a{+}3)$-attached
ones if any $P_4$ exists), and for the $P_4$-free shapes ($K_{1,2}$,
whose centre is attached to both ends of the pair) the length-$2$ path
exists---in every case some even path arises. By
Lemma~\ref{lem:evenpath}, $P(a,a+3)$ then meets the interior of $K$; but
$P$ can enter and leave $K$ only at $a$ and $a{+}3$, so
$P=a\,(\text{path in }K)\,a{+}3$ of length $8$, whose first five edges
form a path of length $5$ leaving $C$: contradiction with
$H_{3,5}$-freeness. So the chord $a\,a{+}3$ is present.
\end{proof}

\begin{lemma}[Swap reduction]\label{lem:swapred}
Call a component of $G[R]$ \emph{swappable} if it is a single edge $uv$
with $N(u)\setminus\{v\}=N(v)\setminus\{u\}\subseteq V(C)$. Let $G_2$ be
obtained from $G$ by deleting every swappable component. Then $G_2$ has
no $9$-cycle, and every two nonadjacent vertices of $G_2$ are the
endpoints of exactly one $9$-path of $G_2$.
\end{lemma}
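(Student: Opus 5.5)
The plan splits into two claims: a subgraph claim for $9$-cycles, and a path-preservation claim for pairs of nonadjacent vertices.

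\emph{$9$-cycles.} $G_2$ is an induced subgraph of $G$ on $V(G)$ minus the deleted swappable components. So any $9$-cycle of $G_2$ is a $9$-cycle of $G$, which does not exist. The same reasoning shows that every $9$-path of $G_2$ is a $9$-path of $G$. Also, nonadjacency in $G_2$ coincides with nonadjacency in $G$ for the surviving vertices. Hence for nonadjacent $u,v\in V(G_2)$ there is \emph{at most} one $9$-path in $G_2$, namely possibly $P(u,v)$.

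\emph{Existence.} It remains to show that $P(u,v)\subseteq G_2$, i.e.\ that $P(u,v)$ avoids every deleted vertex. Fix a swappable component $w_1w_2$. By definition $N(w_1)\setminus\{w_2\}=N(w_2)\setminus\{w_1\}\subseteq V(C)$, so $N_C(w_1)=N_C(w_2)$ and $\deg(w_i)=|N_C(w_i)|+1$. Thus $w_1w_2$ is swappable in the sense of Lemma~\ref{lem:swap}. Its consequence says no $9$-path between nonadjacent vertices outside $\{w_1,w_2\}$ passes through $w_1$ or $w_2$. The argument is that the transposition $\sigma$ swapping $w_1,w_2$ is an automorphism fixing $u,v$, so $P(u,v)$ is fixed pointwise and cannot contain $w_1$ (which is moved). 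Since $u,v$ lie outside every deleted component, applying this to each swappable component in turn shows $P(u,v)$ meets none of them. So $P(u,v)$ lies in $G_2$, giving exactly one $9$-path.

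\emph{Main check.} The step needing care is verifying that the transposition is really an automorphism of $G$ under the stated definition. It is: $w_1\sim w_2$ is preserved, and all other neighbours are shared $C$-vertices. Also, $u,v$ must be fixed. This holds because $u,v\in V(G_2)$, so neither is $w_1$ or $w_2$. No interaction between different swappable components arises, since each is handled by its own automorphism applied to the same path $P(u,v)$ in $G$.
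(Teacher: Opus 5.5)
Your proposal is correct and follows the paper's proof essentially verbatim. Containment $G_2\subseteq G$ gives $9$-cycle-freeness and at most one $9$-path, and the transposition automorphism of each swappable component, combined with Lemma~\ref{lem:swap}, shows that $P(u,v)$ avoids every deleted vertex; your explicit checks (the transposition is an automorphism, and nonadjacency is preserved in the induced subgraph) only fill in details the paper leaves implicit.
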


\begin{proof}
$G_2\subseteq G$ gives $9$-cycle-freeness and at-most-one. For existence:
if $P(u,v)$ ($u,v\in V(G_2)$ nonadjacent) passed through a swappable
component $w_1w_2$, the transposition of $w_1,w_2$ is an automorphism of
$G$ fixing $u,v$, and by Lemma~\ref{lem:swap} it fixes $P(u,v)$ pointwise
while moving one of its vertices: impossible. So $P(u,v)$ survives in
$G_2$.
\end{proof}

\begin{lemma}\label{lem:g2complete}
If every two vertices of $G_2$ are adjacent, then $G$ does not exist.
\end{lemma}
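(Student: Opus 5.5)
The plan is to show that completeness of $G_2$ leaves almost no room: $G_2$ must be a small clique, and then any swappable component closes a forbidden $8$-cycle through that clique.

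First, $V(C)\subseteq V(G_2)$, because swappable components consist only of $R$-vertices. So $G_2$ is a complete graph $K_s$ with $s=|V(G_2)|\ge 6$. A clique on at least $8$ vertices contains an $8$-cycle, and $8\in\Forb(6)$. Hence $s\in\{6,7\}$.

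Next, $|V(G)|\ge 9>s$, so $G$ contains at least one swappable component $w_1w_2$. Write $S=N(w_1)\setminus\{w_2\}=N(w_2)\setminus\{w_1\}\subseteq V(C)$. We need $|S|\ge 2$. This follows from $2$-connectivity \ref{f:2conn}: if $S=\{c\}$, then $c$ would be a cut vertex separating $\{w_1,w_2\}$ from the rest of $G$ (and $S=\emptyset$ would make $G$ disconnected). So pick distinct $c,c'\in S$.

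Finally, build an $8$-cycle. Since $G_2=K_s$ with $s\ge 6$, there is a path in $G_2$ from $c$ to $c'$ on exactly $6$ vertices, i.e.\ of length $5$: choose any four further vertices of the clique as interior vertices. Prepending $c'\,w_1\,w_2\,c$ to this path closes it into the cycle
\[
w_1\,w_2\,c\,(\text{path of length }5\text{ in }G_2)\,c'\,w_1,
\]
of length $5+3=8$. This contradicts $8\in\Forb(6)$, so $G$ does not exist.

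There is no real obstacle here. The only points needing care are the cut-vertex argument giving $|S|\ge 2$ (it uses $\deg(w_i)=|S|+1$ and the fact that $w_1,w_2$ have no neighbours outside $S\cup\{w_1,w_2\}$) and the observation that the cycle lies in $G$, not just in $G_2$. The latter is immediate because $G_2$ is an induced subgraph of $G$.
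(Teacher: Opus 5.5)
Your proposal is correct and is essentially the paper's proof: you show $G_2$ is a small clique containing $V(C)$, get a swappable component whose shared neighbourhood has at least two $C$-vertices by $2$-connectivity, and close a forbidden $8$-cycle through it. The differences are cosmetic. You bound $|V(G_2)|\le 7$ via an $8$-cycle, where the paper gets $\le 8$ from $K_9\supseteq C_9$, and you route the cycle through any four further clique vertices rather than specifically through the remaining vertices of $C$.
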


\begin{proof}
$V(G_2)\supseteq V(C)$, so completeness forces $G[C]=K_6$. Since
$|V(G)|\ge 9>|V(G_2)|$ would fail only if some component was deleted, and
$K_9\supseteq C_9$ bounds $|V(G_2)|\le 8$, some swappable component $uv$
exists. Its shared outside neighbourhood has at least two vertices (one
would be a cut vertex); pick $c_i\ne c_j$ in it and relabel
$\{c_i,c_j\}=\{c_0,c_3\}$: then
$c_0\,u\,v\,c_3\,c_4\,c_5\,c_1\,c_2\,c_0$ is an $8$-cycle (all
$C$-pairs adjacent in $K_6$), forbidden.
\end{proof}

By Lemma~\ref{lem:g2complete} we may assume $G_2$ has a nonadjacent pair,
whose $9$-path lies inside $G_2$ (Lemma~\ref{lem:swapred}): so
$|V(G_2)|\ge 9$. The components of $G_2$ outside $C$ are the
non-swappable components of $G[R]$---and, crucially, \emph{a component
receives no edge from the rest of the graph}, so its vertices'
neighbourhoods are final.

\begin{theorem}\label{thm:l6}
There is no $2$-connected nontrivial uniquely $C_9$-saturated graph whose
longest even cycle of length at most $12$ has length $6$.
\end{theorem}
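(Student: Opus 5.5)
Throughout we work in $G_2$, the graph of Lemma~\ref{lem:swapred}. By Lemma~\ref{lem:g2complete} we may assume $G_2$ has a nonadjacent pair, so $|V(G_2)|\ge 9$. Every two nonadjacent vertices of $G_2$ are joined by exactly one $9$-path inside $G_2$, and $G_2$ has no cycle of length in $\Forb(6)$. The plan is to list all possible shapes of $G_2$ explicitly and then kill each one, either by hand or with the finite enumeration promised in the introduction.

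\textbf{Step 1: the components of $G[R]$.} By Theorem~\ref{thm:6-nodeep}, every vertex of $R$ is attached to $C$. By Lemmas~\ref{lem:6-objects} and~\ref{lem:6-confined}, each non-swappable component of $G[R]$ is one of three kinds:
\begin{enumerate}[label=(\alph*),leftmargin=2.6em]
\item a singleton whose attachment set is a $G[C]$-clique or has size $\ge 4$;
\item a $K_2$ whose ends satisfy Lemma~\ref{lem:6-objects}(ii) and Lemma~\ref{lem:6-rules}(i);
\item a component on at least $3$ vertices confined to an antipodal pair $\{a,a+3\}$ whose chord is present.
\end{enumerate}
For kind (c) I would first bound the size by hand. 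No path of length $5$ leaves $C$, so the component contains no $P_5$ hanging off $C$. Stars $K_{1,m}$ with $m\ge3$ are excluded. Twins are forbidden by \ref{f:twins}, and domination is excluded by Lemma~\ref{lem:6-corner}(ii). Since every vertex attaches only to $a$ or $a+3$, there are at most three neighbourhood types available to vertices of the same colour class. This should cap confined components at about five vertices, with an explicit list such as $K_{1,2}$, triangles, $P_4$, $C_4$ and small theta-shapes, each with its attachment pattern.

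\textbf{Step 2: bound how many components appear.} The chords of $C$ are at most $2^9$ subsets, already restricted by the cycle constraints ($8$- and $9$-cycles are forbidden). For multiplicities, two components with the same attachment pattern either are twins or create a second $9$-path between some fixed nonadjacent pair: rerouting an arc through one component versus the other gives two $9$-paths, as in Lemma~\ref{lem:12-9arc}. Two components attached on the same antipodal pair likewise create a $C_8$ or $C_{10}$ by \ref{e:attach}. I expect these arguments to bound the number of components of each type by a small constant, giving a finite, explicitly listed family of configurations (chord set, multiset of component types with attachments).

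\textbf{Step 3: elimination.} Each configuration in the family is checked mechanically: test for forbidden cycles, then for a nonadjacent pair with zero or at least two $9$-paths. A first pass handles single components together with chord sets. A second pass combines the surviving single-component configurations, using monotonicity of killing to prune: once a subconfiguration is killed, every extension of it is killed too. Finishing requires the known-answer controls and zero cap hits.

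The main obstacle is Step~2, namely proving a hand bound on the multiplicity of identical components that is sharp enough to make the enumeration finite. My first attempt would be the second-$9$-path argument: fix a nonadjacent pair on $C$ whose unique $9$-path must pass through one such component, and observe that the swap of two isomorphic components yields a second path, generalizing Lemma~\ref{lem:swap} to automorphisms exchanging whole components.
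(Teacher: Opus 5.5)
Your architecture matches the paper's: the classification of Theorem~\ref{thm:6-nodeep} and Lemmas~\ref{lem:6-objects}--\ref{lem:6-confined}, passage to $G_2$, then a two-phase search (component types per chord-set orbit, then multisets) with monotone pruning. The gap is that you make finiteness depend on two hand bounds that you do not prove, and the arguments you sketch for them fail.

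\emph{Size of confined components (Step~1(c)).} Equal $C$-attachment, $N_C(x)\in\{\{a\},\{a{+}3\},\{a,a{+}3\}\}$, does not make two vertices twins, and does not make one dominate the other. Twins and domination concern full neighbourhoods, including neighbours inside the component, and the component need not be bipartite (triangles occur). So ``three neighbourhood types'' gives no size bound, and ``about five vertices'' is a guess.

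\emph{Two components on one antipodal pair (Step~2).} The claim that they create a $C_8$ or $C_{10}$ is false in general. Two chordal paths of lengths $\ell,\ell'\le 4$ through different components close a cycle of length $\ell+\ell'\le 8$, which is forbidden only when $\ell=\ell'=4$. For example, $a\,x\,a{+}3$ and $a\,y\,a{+}3$ close a harmless $4$-cycle.

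\emph{Multiplicity (your ``main obstacle'').} The proposed argument cannot work. The map exchanging two components with identical attachments is an automorphism fixing $C$ and every other component. By Lemma~\ref{lem:swap}, every $P(u,v)$ with $u,v$ outside the two copies is fixed pointwise, so it avoids both copies; this is exactly why Lemma~\ref{lem:swapred} may delete swappable components. Hence the ``nonadjacent pair on $C$ whose $9$-path must pass through one such component'' is precisely what you would have to establish, and nothing forces it. The argument kills only configurations where you can show independently that some outside pair has no $9$-path avoiding the copies, so it gives no bound in general.

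\emph{What the paper does instead.} No hand bound is needed, because every pruning criterion persists in every extension. A cycle of length in $\Forb(6)$, an $H_{2,6}$ or $H_{3,5}$, and a nonadjacent pair with two $9$-paths are all inherited, since adding vertices never adds an edge between vertices already present. In the multiset phase, a twin pair lying in two different components is also inherited, since a component's neighbourhoods are final.

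So grow confined components vertex by vertex (connected growth from $\{a,a{+}3\}$), and multisets of types in nondecreasing order. Every prefix of a surviving configuration is an induced subconfiguration and also survives. A search that terminates without ever reaching its caps has therefore covered configurations of every size.

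That is how the paper closes the case. Over the $74$ chord-set orbits, it uses an $8$-vertex cap for confined components and an $8$-component cap for multisets. Both caps are hit zero times, and no state on at least $9$ vertices passes the exact test. Your Step~3 already mentions zero cap hits; once you add this inheritance observation, Step~2 and the speculative size bound in Step~1(c) can simply be dropped.
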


\begin{proof}
Suppose $G$ exists and let $X$ be the chord set of $C$; up to dihedral
symmetry there are $74$ possibilities. By the classification
(Theorem~\ref{thm:6-nodeep}, Lemmas~\ref{lem:6-objects}
and~\ref{lem:6-confined}) and the swap reduction, $G_2=C+X+\mathcal K$
where $\mathcal K$ is a multiset of components, each one of:
\begin{itemize}[leftmargin=2em]
\item a singleton with $N_C$ a $G[C]$-clique or of size $\ge 4$;
\item a non-swappable $K_2$ obeying Lemma~\ref{lem:6-objects}(ii) and the
cross rules;
\item a component on $3$--$8$ vertices confined to an antipodal pair
whose chord lies in $X$ (enumerated exhaustively by a growth search whose
size cap was never reached, so no larger confined component survives even
the monotone kills below).
\end{itemize}
$G_2$ has at least $9$ vertices, is $2$-connected, has no cycle of length
in $\Forb(6)$, no $H_{2,6}$ or $H_{3,5}$, no twins, and exactly one
$9$-path between every nonadjacent pair.

The enumeration (Appendix~\ref{app:enum}) builds, for each chord set, the
finite type list above (the confined types by exhaustive connected
growth, pruned by the monotone kills (m1) a forbidden cycle length,
(m2) an $H_{2,6}$ or $H_{3,5}$, (m3) a nonadjacent pair with two
$9$-paths, and closed under final per-component filters: minimum degree
$2$, no cut vertex---of the component or of $C$---separating it, no
internal twins), and then searches all multisets of types in
nondecreasing order, pruning by (m1)--(m3) and by twin pairs between
components, and testing every state on $\ge 9$ vertices against the exact
criterion ($2$-connected, no $9$-cycle, exactly one $9$-path per
nonadjacent pair). Both the component-size cap and the multiset-depth cap
count their hits, and both counters are \textbf{zero}, so no branch was
ever truncated: the enumeration is complete outright. It reports
\textbf{zero survivors}. Hence no $G_2$, and no $G$, exists.
\end{proof}

\section{Case \texorpdfstring{$L=8$}{L=8}}\label{sec:l8}

Here $\Forb(8)=\{9,10,12,14,16\}$ and $H_{2,6}$, $H_{3,5}$, $H_{4,4}$ are
forbidden. This is the richest case. Its resolution combines a component
classification (Lemmas~\ref{lem:8-shapes}--\ref{lem:8-comp}), a hand proof
for a chordless $C$ (Theorem~\ref{thm:8-chordless}), and, for a chorded
$C$, a reduction of the chord set through the Hamiltonian paths of $G[C]$
(Lemma~\ref{lem:hamkill}--Proposition~\ref{prop:surviving}) followed by a
controlled enumeration.

Since $C$ is an $8$-cycle, $H_{4,4}$-freeness means that \emph{no path of
length $4$ leaves $V(C)$ into $R$}: there is no path $c\,r_1r_2r_3r_4$ with
$c\in V(C)$, $r_i\in R$. In particular every chordal path has length at
most~$4$.

\begin{lemma}\label{lem:8-shapes}
\leavevmode
\begin{enumerate}[label=(\roman*),leftmargin=2.6em]
\item The $C$-neighbours of any $x\in R$ are pairwise at $C$-distance
$\ge 2$; hence $N_C(x)$ is an independent set of $C_8$.
\item A vertex attached at distance $d$ satisfies $d\in\{2,3,4\}$.
\item A chordal path of length $3$ joins $C$-vertices at distance $3$ or
$4$, or its two ends coincide.
\item A chordal path of length $4$ joins $C$-vertices at distance $1$ or
$4$, or its two ends coincide.
\end{enumerate}
\end{lemma}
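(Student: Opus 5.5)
The plan is the same bookkeeping used for Lemmas~\ref{lem:10-chordal}, \ref{lem:10-attach} and \ref{lem:6-rules}. By \ref{e:attach}, a chordal path of length $\ell$ between $C$-vertices at distance $s\ge 1$ creates cycles of lengths $\ell+s$ and $\ell+8-s$. For each pair $(\ell,s)$ with $\ell\in\{2,3,4\}$ and $s\in\{1,2,3,4\}$, we either find one of these lengths in $\Forb(8)=\{9,10,12,14,16\}$, or apply the pendant-path device: the short cycle, through the $s$-arc, has the complementary $(8-s)$-arc hanging at one of its vertices as a pendant path. This pendant path kills via $H_{2,6}$ if the short cycle has length $4$, via $H_{3,5}$ if it has length $6$, and via $H_{4,4}$ if it has length $8$. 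Coinciding ends give a cycle of length $\ell\le 4$, which is never forbidden, so that exception is stated explicitly in (iii) and (iv).

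\emph{Part (ii).} Here $\ell=2$. For $d=1$ the cycles have lengths $3$ and $9$: a $9$-cycle. For $d=2$ they are $4$ and $8$; for $d=3$, $5$ and $7$; for $d=4$, $6$ and $6$. None of these is forbidden, so $d\in\{2,3,4\}$.

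\emph{Part (i).} Part (i) is just a restatement of (ii): distance $\ge 2$ between any two $C$-neighbours of $x$ means no two of them are consecutive on $C$. So $N_C(x)$ is independent in $C_8$, meaning in the cycle graph, not necessarily in $G[C]$.

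\emph{Part (iii).} Here $\ell=3$. For $s=1$ the cycles have lengths $4$ and $10$, and $10$ is forbidden. For $s=2$ they have lengths $5$ and $9$: a $9$-cycle. For $s=3$ ($6,8$) and $s=4$ ($7,7$) nothing is forbidden, so these survive.

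\emph{Part (iv).} Here $\ell=4$. For $s=2$ the cycles have lengths $6$ and $10$, forbidden. For $s=3$ they have lengths $7$ and $9$, forbidden. For $s=1$ they have lengths $5$ and $11$, and for $s=4$ lengths $8$ and $8$; none is forbidden. The pendant device does not apply to these survivors either. For $s=4$, the $8$-cycle carries a pendant arc of length $4$, but it hangs at a vertex of the cycle, and the remaining arc meets the cycle only at its ends. Only chordal paths themselves give pendant paths leaving $C$, and a length-$4$ chordal path has at most $3$ consecutive $R$-edges, so $H_{4,4}$-freeness does not exclude it. So $s\in\{1,4\}$ survive, matching the statement.

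\emph{Main obstacle.} The only real subtlety is confirming that no stronger exclusion was intended. For instance, in (iii) with $s=3$, a $6$-cycle with pendant $5$-arc would kill via $H_{3,5}$, but the cycle through the $3$-arc has length $6$ and its complementary arc has length $5$, so it would. We therefore need to check that case carefully against the statement. The lengths are $3+3=6$ and $3+5=8$; the $6$-cycle uses the $3$-arc and the complementary $5$-arc hangs as a pendant path of length $5$, which appears to be an $H_{3,5}$. This conflicts with the claimed survival of distance $3$. Before finalizing, I would recheck whether the pendant path is genuinely disjoint from the cycle: it shares both endpoints with it, so only the path from one end through to just before the other end counts, giving length $4$, not $5$. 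That resolves the conflict, and the device's effective pendant length is $8-s-1$ when both ends lie on the cycle. I would state this carefully and use it uniformly in all the cases above.
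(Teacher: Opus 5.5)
Your proof is correct and is essentially the paper's: the (E2) bookkeeping $\ell+s$, $\ell+8-s$ against $\Forb(8)$ excludes exactly the listed distances, and (i) is the $d=1$ case of (ii), i.e.\ the $9$-cycle $x\,c_{i+1}c_{i+2}\cdots c_i\,x$. The pendant-path digression and the ``main obstacle'' are unnecessary, because the lemma only asserts necessary conditions: an extra $H_{k,8-k}$ kill could never conflict with it, and with the correct pendant length $8-s-1$ no extra kill arises anyway.
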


\begin{proof}
(i) Neighbours $c_i,c_{i+1}$ give the $9$-cycle
$x\,c_{i+1}c_{i+2}\cdots c_i\,x$. (ii)--(iv): by \ref{e:attach} a chordal
path of length $\ell$ at end-distance $s\ge 1$ creates cycles of lengths
$\ell+s$ and $\ell+8-s$; avoiding $\Forb(8)$ leaves, for $\ell=2$:
$s\in\{2,3,4\}$; for $\ell=3$: $s\in\{3,4\}$; for $\ell=4$: $s\in\{1,4\}$.
Coinciding ends turn the path into a cycle of length $\ell\le 4$, which is
not forbidden as such and is dealt with where it arises.
\end{proof}

\begin{lemma}[{$P_4$-freeness of $G[R]$}]\label{lem:8-p4}
$G[R]$ contains no path on four vertices.
\end{lemma}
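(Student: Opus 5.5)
The plan is to argue by contradiction from a longest path in $G[R]$ and to use $H_{4,4}$-freeness, i.e.\ the fact that no path $c\,r_1r_2r_3r_4$ leaves $V(C)$ into $R$. Suppose $G[R]$ contains a path on four vertices. Extend it to a path $Q=r_1r_2\cdots r_k$ in $G[R]$ with $k\ge 4$ maximal.

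\emph{Step 1: the ends of $Q$ have no $C$-neighbours.} If $c\sim r_1$ with $c\in V(C)$, then $c\,r_1r_2r_3r_4$ is a forbidden path of length $4$ leaving $C$. The same holds for $r_k$. By maximality of $Q$, every $R$-neighbour of $r_1$ lies on $Q$. Moreover, if $r_1\sim r_j$ with $j\ge 5$, then $r_2r_1r_j r_{j-1}\cdots$ gives another long path. I will keep this only as extra bookkeeping, since all we need is that $N(r_1)\subseteq V(Q)$.

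\emph{Step 2: locate an attachment next to $r_1$.} By the fan lemma, $r_1$ lies on a chordal path. Every chordal path has length at most $4$, and $r_1$ has no $C$-neighbour, so one side of this chordal path has length exactly $2$. Hence some $r_j\in N(r_1)$, with $j\ge 2$, has a $C$-neighbour $c$. Now split by $j$.
\begin{enumerate}[label=(\alph*),leftmargin=2.6em]
\item If $j\ge 4$, then $c\,r_j\,r_1r_2r_3$ is a path of length $4$ leaving $C$, which is dead.
\item If $j=3$, then $c\,r_3\,r_1r_2$ followed by any further $R$-neighbour of $r_2$ off $\{r_1,r_3\}$ is dead. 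Otherwise $c\,r_3r_2r_1$ is used instead, continued along $r_4$ via $r_1\sim r_3\sim r_4$. Since $k\ge4$, $r_4$ exists, and $c\,r_3\,r_1r_2\,\cdot$ or $c\,r_3r_2r_1$ combined with $r_4$ must be arranged to give four $R$-vertices after $c$. Concretely, the path is $c\,r_3\,r_2\,r_1$ when $r_1\sim r_3$, and when $r_2\sim r_4$ it becomes $c\,r_3\,r_1\,r_2\,r_4$.
\item If $j=2$ and $k\ge5$, then $c\,r_2r_3r_4r_5$ is dead.
\end{enumerate}
So the surviving configurations are $k=4$, with every end attached to $C$ only through $r_2$ or $r_3$ and all chords of $Q$ confined to $\{r_1r_3, r_2r_4, r_1r_4\}$. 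The same analysis applies at $r_4$, by symmetry.

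\emph{Step 3: finish the small residual shapes.} The induced graph on $\{r_1,\dots,r_4\}$ is a $P_4$ plus some of the chords $r_1r_3$, $r_2r_4$, $r_1r_4$. Here $r_1,r_4$ have no $C$-neighbours, $\deg r_1,\deg r_4\ge2$, and $N(r_1),N(r_4)\subseteq\{r_1,\dots,r_4\}$.
\begin{itemize}[leftmargin=2em]
\item If $r_1\sim r_4$, the four vertices lie on a $4$-cycle. A $C$-neighbour $c$ of $r_2$ then gives $c\,r_2r_1r_4r_3$ of length $4$ leaving $C$, which is dead. The same holds for $r_3$. So the component meets $C$ nowhere, contradicting connectivity.
\item Otherwise $r_1\sim r_3$ and $r_4\sim r_2$, both forced by degree. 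Then $c\,r_2\,r_4\,r_3\,r_1$ is a length-$4$ path leaving $C$ for any $c\in N_C(r_2)$, and symmetrically for $r_3$. Again there is no attachment, a contradiction.
\end{itemize}

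I expect the main obstacle to be Step 2(b) and the bookkeeping in Step 3. There one must check carefully that every surviving chord pattern on the four vertices still yields some Hamiltonian path of the quartet starting at an attached vertex. I would handle this by listing the $P_4$-containing graphs on four vertices with $\deg r_1,\deg r_4\ge 2$ inside the quartet. For each, I would exhibit a Hamiltonian path of the quartet starting at $r_2$ and one starting at $r_3$. Whichever of $r_2,r_3$ carries the attachment, prefixing $c$ then gives the forbidden length-$4$ path.
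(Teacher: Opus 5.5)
There is a genuine gap in Step~2(b), and Step~3 depends on it. For $j=3$ none of the paths you write down kills anything. The path $c\,r_3\,r_2\,r_1$ contains only three $R$-vertices, and continuing from $r_1$ to $r_4$ would require $r_1\sim r_4$, which you do not have. Consequently (a)--(c) do not force $k=4$. Take $Q=r_1r_2r_3r_4r_5$ with the extra edges $r_1r_3$, $r_3r_5$, and with $r_3$ the only attached vertex. This passes every test in your Step~2: each end has the attached neighbour $r_3$, (a) and (c) do not apply, and $r_2$, $r_4$ have no further neighbours. Yet your Step~3 only treats a quartet. The configuration is of course impossible, since $r_3$ separates it, but that is exactly the information from the fan lemma that you discarded. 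Step~3 itself is correct once $k=4$ is known. When $r_1\sim r_4$, the Hamiltonian paths $r_2r_1r_4r_3$ and $r_3r_4r_1r_2$ show that neither $r_2$ nor $r_3$ is attached. Otherwise $r_2r_4r_3r_1$ and $r_3r_1r_2r_4$ do the same. Either way this contradicts Step~2.

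The missing idea is already in your hands. Since $r_1$ has no $C$-neighbour and the fan chordal path through $r_1$ has length at most $4$, \emph{both} of its halves have length exactly $2$, with distinct middle vertices. So $r_1$ has two distinct attached neighbours, both on $Q$ by maximality and both different from the unattached $r_k$. If $k\ge 5$, at most one of them is $r_3$. The other is $r_2$, killed by (c), or some $r_j$ with $j\ge 4$, killed by (a). Hence $k=4$, and Step~3 finishes. With this observation the longest-path scaffolding becomes unnecessary, and that is exactly the paper's proof. Take an arbitrary path $u\,v\,x\,y$ in $G[R]$. Then $u$ is unattached, and its two distinct attached fan neighbours $a\neq a'$ both differ from $y$. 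Either $\{a,a'\}=\{v,x\}$, giving the forbidden path $c'\,v\,u\,x\,y$, or one of them, say $a'$, lies outside $\{v,x,y\}$, giving $c'\,a'\,u\,v\,x$.
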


\begin{proof}
Suppose $u\,v\,x\,y$ is a path in $G[R]$. If $u$ had a $C$-neighbour $c$,
then $c\,u\,v\,x\,y$ would be a path of length $4$ leaving $C$; so $u$,
and symmetrically $y$, has no $C$-neighbour. By the fan lemma there are
two paths from $u$ to $V(C)$, disjoint except at $u$, each meeting $V(C)$
only in its endpoint; their union is a chordal path through $u$ of length
at most $4$, and since $u$ has no $C$-neighbour both paths have length
exactly $2$: say $u\,a\,c$ and $u\,a'\,c'$ with $a\neq a'$ in $R$ and
$c,c'\in V(C)$. Neither $a$ nor $a'$ equals $y$, which has no
$C$-neighbour. If $a=x$: either $a'=v$, and then $c'\,v\,u\,x\,y$ is a
path of length $4$ leaving $C$, or $a'\notin\{v,x,y\}$, and then
$c'\,a'\,u\,v\,x$ is one. The case $a'=x$ is symmetric. Otherwise at
least one of $a,a'$---say $a'$---lies outside $\{v,x,y\}$, and
$c'\,a'\,u\,v\,x$ is again a path of length $4$ leaving $C$. Every branch
contradicts $H_{4,4}$-freeness.
\end{proof}

Consequently every component of $G[R]$ is a star $K_{1,m}$ or a triangle.

\begin{lemma}[No shared attachments across a $2$-path]\label{lem:8-shared}
There are no two $R$-vertices $u_1,u_2$ with a common neighbour $w\in R$
and a common $C$-neighbour $c$.
\end{lemma}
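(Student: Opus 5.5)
The plan is a one-step kill by the pendant-path device of \ref{f:pendant}: the assumed configuration already contains a $4$-cycle meeting $C$ in a single vertex, and the whole of the $8$-cycle $C$ then supplies a pendant path that is far too long.

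Suppose, for contradiction, that $u_1,u_2\in R$ are distinct, that $w\in R$ is adjacent to both, and that $c\in V(C)$ is adjacent to both. The four vertices $c,u_1,w,u_2$ are pairwise distinct: $c$ lies on $C$, while $u_1,u_2,w$ lie in $R$ and $w\ne u_i$ because $w$ is a neighbour of $u_i$. The edges $cu_1$, $u_1w$, $wu_2$, $u_2c$ are all present, so $c\,u_1\,w\,u_2\,c$ is a $4$-cycle. It does not matter whether $u_1\sim u_2$, since the chord $u_1u_2$ plays no role.

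Relabel $C$ so that $c=c_0$. The arc $c_0c_1c_2\cdots c_7$ is a path of length $7$. It meets the $4$-cycle only in $c_0$, because its other vertices lie on $C$ while $u_1,w,u_2\in R$. Hence the $4$-cycle together with this arc contains a $4$-cycle with a pendant path of length $7\ge 6$ attached at $c_0$, i.e.\ an $H_{2,6}$. This contradicts \ref{f:pendant}, and so no such $u_1,u_2,w,c$ exist.

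There is no real obstacle here. The only points to check are the distinctness of the four cycle vertices and the disjointness of the pendant arc from $\{u_1,w,u_2\}$; both follow at once from the partition $V(G)=V(C)\cup R$. In particular, none of the finer rules of Lemma~\ref{lem:8-shapes} is needed.
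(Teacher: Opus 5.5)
Your proposal is correct and is essentially the paper's proof: the $4$-cycle $c\,u_1\,w\,u_2\,c$ meets $V(C)$ only in $c$, and an arc of $C$ starting at $c$ serves as a pendant path, giving a forbidden $H_{2,6}$. The paper uses an arc of length $6$ where you use the full length-$7$ arc $c_0c_1\cdots c_7$; by monotonicity this makes no difference.
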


\begin{proof}
$u_1\,c\,u_2\,w\,u_1$ is a $4$-cycle meeting $V(C)$ only in $c$, and the
arc of $C$ of length $6$ starting at $c$ (away from the cycle) is a
pendant path of length $6$ at $c$: an $H_{2,6}$.
\end{proof}

\begin{lemma}[Components]\label{lem:8-comp}
Every component of $G[R]$ is a singleton, an edge $K_2$, or a path
$K_{1,2}$.
\end{lemma}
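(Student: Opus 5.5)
By Lemma~\ref{lem:8-p4} and the remark following it, every component of $G[R]$ is a star $K_{1,m}$ ($m\ge 0$) or a triangle. So the plan is to rule out two shapes: triangles, and stars $K_{1,m}$ with $m\ge 3$. The tools are Lemma~\ref{lem:8-shapes}, Lemma~\ref{lem:8-shared}, $H_{4,4}$-freeness, $2$-connectivity, and the absence of twins.

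\emph{Triangles.} Let $K=\{a,b,c\}$ be a triangle component. Each vertex of $K$ has degree at least $2$, and its only non-$C$ neighbours lie in $K$.
\begin{itemize}
\item Since $G$ is $2$-connected, $N(K)\cap V(C)$ has at least two vertices.
\item No two vertices of $K$ share a $C$-neighbour. Indeed, any two vertices of $K$ have the third as a common $R$-neighbour, so a shared $C$-neighbour contradicts Lemma~\ref{lem:8-shared}.
\item Pick $c_i\in N_C(a)$ and $c_j\in N_C(b)$ with $c_i\ne c_j$; they are distinct by the previous point. Then $c_i\,a\,b\,c_j$ is a chordal path of length $3$, and $c_i\,a\,c\,b\,c_j$ is one of length $4$.
\item Lemma~\ref{lem:8-shapes}(iii) forces $\dist(c_i,c_j)\in\{3,4\}$, and (iv) forces $\dist(c_i,c_j)\in\{1,4\}$. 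Hence $\dist=4$.
\item Then the $8$-cycle $c_i\,a\,b\,c_j$ together with the $4$-arc gives a cycle of length $3+4=7$, and the route $c_i\,a\,c\,b\,c_j$ gives one of length $4+4=8$. These are not forbidden outright, so I would use the pendant-path device instead. The cycle $c_i\,a\,c\,b\,c_j$ plus a $4$-arc is an $8$-cycle, and the complementary $4$-arc hangs from it as a pendant path of length $4$. That is an $H_{4,4}$, so this case is killed.
\item If instead only one vertex of $K$, say $a$, has $C$-neighbours, then $b$ has degree $2$ with $N(b)=\{a,c\}$. Either $c$ is likewise attachment-free, in which case $a$ is a cut vertex, contradicting $2$-connectivity. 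Or $c$ is attached, which returns us to the two-attached case above.
\end{itemize}

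\emph{Stars $K_{1,m}$, $m\ge 3$.} Let the centre be $z$ and the leaves $u_1,\dots,u_m$.
\begin{itemize}
\item A leaf's only $R$-neighbour is $z$, so by $2$-connectivity each leaf has at least one $C$-neighbour.
\item By Lemma~\ref{lem:8-shared}, the attachment sets $N_C(u_i)$ are pairwise disjoint.
\item For $c\in N_C(u_i)$ and $c'\in N_C(u_j)$, the path $c\,u_i\,z\,u_j\,c'$ is a chordal path of length $4$ with distinct ends. Lemma~\ref{lem:8-shapes}(iv) forces $\dist(c,c')\in\{1,4\}$.
\item Now take three leaves with representatives $c,c',c''$. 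Their pairwise distances all lie in $\{1,4\}$. On $C_8$, the only such triples are of the form $\{x,x+1,x+4\}$ or $\{x,x+1,x+5\}$: two vertices adjacent and the third antipodal to one of them. (Three mutual distance-$1$ points are impossible, and at most one antipode exists.)
\item So without loss of generality $N_C(u_1)\ni c_0$, $N_C(u_2)\ni c_1$, $N_C(u_3)\ni c_4$.
\item The kill: the cycle $c_0\,u_1\,z\,u_3\,c_4\,c_3\,c_2\,c_1\,c_0$ has length $8$. At $c_1$ hangs the pendant path $c_1\,u_2$, extended along the other arc. The cleaner route is to look for a forbidden cycle length directly. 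The cycle $c_1\,u_2\,z\,u_3\,c_4\,c_5\,c_6\,c_7\,c_0\,c_1$ has $9$ vertices, which gives a $9$-cycle and is immediately forbidden. For the other triple type I expect an analogous $9$-cycle or $10$-cycle, such as $c_0 u_1 z u_3 c_5\cdots$. Checking these few configurations under dihedral symmetry is the step needing care.
\end{itemize}

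The main obstacle I anticipate is the star case when some leaves have several $C$-neighbours, and the bookkeeping of which representatives to choose. It is handled by fixing one representative per leaf: the distance argument and the final cycle use only those representatives, so extra attachments do no harm by monotonicity of killing.
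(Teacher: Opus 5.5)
Your overall plan matches the paper's: reduce to stars and triangles, then exclude $K_{1,m}$ with $m\ge 3$ and exclude triangles. However, the triangle case has a genuine gap. The claimed $H_{4,4}$ is not there. Take the $8$-cycle formed by $c_i\,a\,c\,b\,c_j$ and one $4$-arc. The complementary $4$-arc runs from $c_i$ to $c_j$, and \emph{both} its ends lie on that $8$-cycle. So the pendant it supplies is only $c_i\,c_{i+1}\,c_{i+2}\,c_{i+3}$, of length $3$: a complementary arc always yields a pendant one edge shorter than itself. An $H_{4,3}$ is not forbidden. Indeed, $C_8$ plus a triangle $u\,v\,x$ with $N_C(u)=\{c_0\}$, $N_C(x)=\{c_4\}$, $N_C(v)=\emptyset$ has cycles of lengths $3,7,8$ only and contains no $H_{k,8-k}$. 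So no forbidden-subgraph argument can kill it, and a global path count is needed.

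The paper first pins the shape down exactly. Each cross pair is at distance exactly $4$ (your computation) and never coincides (Lemma~\ref{lem:8-shared}), and the antipode on $C_8$ is unique. Hence two attached triangle vertices have singleton, mutually antipodal attachment sets; three attached vertices are impossible; and a single attached vertex is a cut vertex. This gives $N(u)=\{a,v,x\}$, $N(x)=\{a{+}4,u,v\}$, $N(v)=\{u,x\}$. This step cannot be replaced by ``one representative per vertex plus monotonicity'', because the final kill (a pair with \emph{no} $9$-path) is not monotone under adding edges.

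Then let $p_5,p_6$ be the numbers of $a$--$(a{+}4)$ paths of length $5$ and $6$ in $G-\{u,v,x\}$. A length-$6$ path yields the $9$-path $u\,a\cdots(a{+}4)\,x$ between adjacent vertices, i.e.\ a $9$-cycle, so $p_6=0$. A length-$5$ path yields $u\,a\cdots(a{+}4)\,x\,v$ likewise, so $p_5=0$. But every $9$-path from $v$ to the nonadjacent vertex $a$ has the form $v\,u\,x\,(a{+}4)\cdots a$ or $v\,x\,(a{+}4)\cdots a$. So there are $p_5+p_6=0$ of them, contradicting \ref{f:obs}.

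In the star case your list of triples is wrong. In $\{x,x{+}1,x{+}4\}$ the pair $x{+}1,x{+}4$ is at distance $3$, and in $\{x,x{+}1,x{+}5\}$ the pair $x,x{+}5$ is at distance $3$. In fact no three positions of $C_8$ are pairwise at distance in $\{1,4\}$. From $\{0,1\}$ the third would lie in $\{1,7,4\}\cap\{0,2,5\}=\emptyset$, and from $\{0,4\}$ in $\{1,7,4\}\cap\{3,5,0\}=\emptyset$. So the constraint you already derived closes the case at once; it comes from Lemma~\ref{lem:8-shapes}(iv) together with the disjointness of attachment sets from Lemma~\ref{lem:8-shared}. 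Your $9$-cycle is just the cycle underlying Lemma~\ref{lem:8-shapes}(iv), and no ``other triple type'' remains to check. With this correction, your star argument coincides with the paper's.
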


\begin{proof}
By Lemma~\ref{lem:8-p4} the components are stars and triangles.

\emph{Stars $K_{1,m}$ with $m\ge 3$.} Every leaf has a $C$-neighbour
(else it has degree $1$, contradicting $2$-connectivity), and by
Lemma~\ref{lem:8-shared} the leaves' $C$-neighbourhoods are pairwise
disjoint. For distinct leaves $u_i,u_j$ and $c\in N_C(u_i)$,
$c'\in N_C(u_j)$, the chordal path $c\,u_i\,v\,u_j\,c'$ has length $4$, so
$\dist(c,c')\in\{1,4\}$ by Lemma~\ref{lem:8-shapes}(iv). Three leaves
would give three distinct positions on $C_8$ pairwise at distance $1$ or
$4$: from a pair $\{0,1\}$ the third position must lie in
$\{1,7,4\}\cap\{0,2,5\}=\emptyset$; from a pair $\{0,4\}$ in
$\{1,7,4\}\cap\{3,5,0\}=\emptyset$. So $m\le 2$.

\emph{Triangles $u\,v\,x$.} For distinct $c\in N_C(u)$, $c'\in N_C(x)$ the
paths $c\,u\,x\,c'$ (length $3$) and $c\,u\,v\,x\,c'$ (length $4$) force
$\dist(c,c')\in\{3,4\}\cap\{1,4\}=\{4\}$, and $c=c'$ is impossible by
Lemma~\ref{lem:8-shared} (through the third triangle vertex). Since the
vertex of $C_8$ at distance $4$ from a given one is unique, any two
triangle vertices with nonempty attachment sets have singleton, mutually
antipodal sets. Three nonempty sets would be pairwise antipodal
singletons---impossible. If at most one triangle vertex were attached, the
other two would reach $C$ only through it: a cut vertex, contradicting
\ref{f:2conn}. So exactly two are attached:
$N_C(u)=\{a\}$, $N_C(x)=\{a{+}4\}$, $N_C(v)=\emptyset$, hence
$N(v)=\{u,x\}$, $N(u)=\{a,v,x\}$, $N(x)=\{a{+}4,v,u\}$.

We kill this configuration by counting routes. Let $p_5$ (resp.\ $p_6$)
denote the number of paths of length $5$ (resp.\ $6$) from $a$ to $a{+}4$
in $G-\{u,v,x\}$. Since $u\sim x$, no $9$-path joins them; but every
$6$-path from $a$ to $a{+}4$ in $G-\{u,v,x\}$ extends to the $9$-path
$u\,a\cdots a{+}4\,x$: so $p_6=0$. Since $u\sim v$, the extension
$u\,a\cdots a{+}4\,x\,v$ likewise gives $p_5=0$. Now consider the
nonadjacent pair $(v,a)$. Every path from $v$ starts $v\,u$ or $v\,x$;
from $u$ it can only continue to $a$ or $x$, and from $x$ only to $a{+}4$
or $u$. Hence every $9$-path from $v$ to $a$ has the form
$v\,u\,x\,a{+}4\cdots a$ (a $5$-path from $a{+}4$ to $a$ in
$G-\{u,v,x\}$) or $v\,x\,a{+}4\cdots a$ (a $6$-path), and there are
$p_5+p_6=0$ of them, contradicting \ref{f:obs}. So no triangle component
exists.
\end{proof}

\begin{lemma}[Degree rule]\label{lem:8-deg}
Let $w$ be a vertex two of whose neighbours $u,v$ are nonadjacent. Then
$w$ is interior to $P(u,v)$; writing $P(u,v)=u\,A\,w\,B\,v$ with
$a=|A|\ge 1\le b=|B|$: $\deg(w)\ge 3$, and if $a,b\ge 2$ then
$\deg(w)\ge 4$. In particular every vertex whose neighbourhood is not a
clique has degree at least $3$.
\end{lemma}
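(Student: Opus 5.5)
The plan is to mirror the Split lemma (Lemma~\ref{lem:6-split}), now in Case $L=8$ where $8\notin\Forb(8)$. Since $u,v$ are nonadjacent with common neighbour $w$, Lemma~\ref{lem:evenpath} (which needs only $L\le 8$) makes $w$ an interior vertex of $P(u,v)$. So write $P(u,v)=u\,A\,w\,B\,v$, where $A$ is the subpath from $u$ to $w$ and $B$ the subpath from $w$ to $v$, with $a+b=8$ and $a,b\ge 1$.

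The degree bounds come from counting distinct neighbours of $w$.

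\emph{Case $a,b\ge 2$.} The $P$-neighbour of $w$ on the $A$ side is not $u$, and the one on the $B$ side is not $v$. Both are distinct from $u$ and $v$, since $P$ is a path and each of $u,v$ occurs only once, as an endpoint. Hence $w$ has at least the four distinct neighbours $u$, $v$, and its two $P$-neighbours, so $\deg(w)\ge 4$.

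\emph{Case $a=1$ (the case $b=1$ is symmetric).} Now $B$ is a $w$--$v$ path of length $7$, and its $P$-neighbour of $w$, call it $w'$, is different from $v$ because $b=7\ge 2$. Also $w'\neq u$. So $u$, $v$ and $w'$ are three distinct neighbours of $w$, giving $\deg(w)\ge 3$. Note that $B$ together with the edge $wv$ closes an $8$-cycle, which is allowed at $L=8$; this is exactly why the stronger conclusion of the $L=6$ split lemma is unavailable here and only degree $3$ is claimed.

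The final sentence follows at once. If $N(w)$ is not a clique, it contains two nonadjacent vertices $u,v$, and the first part applies. In every case $\deg(w)\ge 3$.

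There is no real obstacle. The one point to check is that the even-path lemma really applies at $L=8$: it requires $10,12,14,16\in\Forb(8)$, which holds. This forces $w$ to be interior, since otherwise $u\,w\,v$ and $P(u,v)$ would be internally disjoint and form a $10$-cycle.
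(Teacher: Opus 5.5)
Your proof is correct and follows the paper's argument essentially line for line. Lemma~\ref{lem:evenpath} (valid since $10,12,14,16\in\Forb(8)$) puts $w$ in the interior of $P(u,v)$; the two $P$-neighbours of $w$ together with $u,v$ give four distinct neighbours when $a,b\ge 2$, and when $a=1$ the $B$-side neighbour differs from both $u$ and $v$ because $b=7$, which gives $\deg(w)\ge 3$.
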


\begin{proof}
$w$ is interior to $P(u,v)$ by Lemma~\ref{lem:evenpath} (the path
$u\,w\,v$ has even length $2$). Let $p,q$ be $w$'s neighbours on $A$, $B$;
$p\ne q$. If $a\ge 2$ then $p\ne u$; if $b\ge 2$ then $q\ne v$; if $a=1$
then $p=u$, $b=7$, and $q\notin\{u,v\}$, so $\deg(w)\ge 3$.
\end{proof}

\begin{lemma}[Gateway lemma]\label{lem:8-gate}
Let $K$ be a set of at most $6$ vertices with $N(K)\setminus K=\{s,s'\}$
and $s\not\sim s'$. If some $s$--$s'$ path of even length has all its
interior vertices in $K$, then $G$ does not exist.
\end{lemma}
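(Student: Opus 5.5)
The plan follows the pattern already used in Step~4 of Theorem~\ref{thm:6-nodeep} and in the confined-component argument of Lemma~\ref{lem:6-confined}, now at $L=8$.

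Since $s\not\sim s'$, the unique $9$-path $P=P(s,s')$ exists by \ref{f:obs}. Let $Q$ be the given $s$--$s'$ path of even length $\ell$ with interior in $K$. Because $|K|\le 6$, $Q$ has at most $6$ interior vertices, so $\ell\le 7$; as $\ell$ is even and at least $2$, we have $\ell\in\{2,4,6\}$. Lemma~\ref{lem:evenpath} (valid since $L=8$) then says that $Q$ shares an interior vertex with $P$. In particular $P$ enters $K$.

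Next I show that $P$ lies almost entirely in $K$. Every edge from $K$ to the rest of the graph ends at $s$ or $s'$, because $N(K)\setminus K=\{s,s'\}$. The vertices $s$ and $s'$ are the endpoints of $P$, so each is visited exactly once, at the ends. Once $P$ is inside $K$ it can leave only through $s$ or $s'$. Hence $P$ consists of $s$, then a path inside $K$, then $s'$, and all $7$ interior vertices of $P$ lie in $K$. This forces $|K|\ge 7$, contradicting $|K|\le 6$. So no such $G$ exists.

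The only point needing care is that the hypothesis really gives $N(K)\setminus K=\{s,s'\}$ and not merely that $K$ has few outside neighbours. Given that, the exit argument is airtight. There is a subtle case where $s$ or $s'$ has further neighbours inside $K$, so that $P$ might leave $s$, return to $K$, and so on. This cannot happen: $s$ and $s'$ each occur on $P$ only once, as its endpoints, so $P$ never passes through them in its interior. Thus there is no real obstacle beyond checking that Lemma~\ref{lem:evenpath} applies, namely that $\ell\le 8$ is even and $s\not\sim s'$; both hold.
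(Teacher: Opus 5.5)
Your proof is correct and is essentially the paper's argument with the two steps taken in the opposite order. The paper first shows that $P(s,s')$ cannot meet $K$: otherwise it would run $s$, then a path inside $K$, then $s'$, with length at most $|K|+1\le 7<8$. It then contradicts Lemma~\ref{lem:evenpath}. You instead invoke Lemma~\ref{lem:evenpath} first and then derive the same contradiction via $|K|\ge 7$.
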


\begin{proof}
Let $P=P(s,s')$, of length $8$. If $P$ met $K$: walking along $P$, a
$K$-portion can end only in $\{s,s'\}$, so it is an interval adjacent to
an endpoint of $P$, and then $P$ runs $s\,(\text{path in }K)\,s'$ with
length at most $|K|+1\le 7<8$. So $P$ avoids $K$; but then the given even
path is internally disjoint from $P$, contradicting
Lemma~\ref{lem:evenpath}.
\end{proof}

\begin{lemma}[No distance-$2$ attachment]\label{lem:no-dist2}
No vertex of $R$ has two $C$-neighbours at $C$-distance $2$. Consequently
the $C$-neighbourhood of a vertex of $R$ is empty, a singleton, a
distance-$3$ pair $\{c_i,c_{i+3}\}$, or an antipodal pair
$\{c_i,c_{i+4}\}$.
\end{lemma}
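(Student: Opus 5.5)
The plan is to take $x\in R$ with $x\sim c_0$ and $x\sim c_2$ and push the pendant-path device as far as it goes. Lemma~\ref{lem:8-shapes}(ii) does not already exclude this case, because the cycles $x\,c_0c_1c_2$ (length $4$) and $x\,c_2c_3\cdots c_7c_0$ (length $8$) have no length in $\Forb(8)$. However, the $4$-cycle $Q=x\,c_0c_1c_2$ carries the arcs $c_2c_3\cdots c_7$ and $c_0c_7\cdots c_3$ as pendant paths of length $5$, one short of an $H_{2,6}$. So the first step is to record what $H_{2,6}$-freeness forbids around $Q$:
\begin{itemize}[leftmargin=2em]
\item Neither $c_3$ nor $c_7$ has a neighbour in $R\setminus\{x\}$, since such a neighbour would extend one of the length-$5$ arcs to a pendant path of length $6$.
\item More generally, no path of length $\ge 6-j$ into $R\setminus\{x\}$ leaves $c_{2+j}$ (resp.\ $c_{-j}$), with the arc of $C$ supplying the remaining $j$ edges.
\item The same analysis applies to the $4$-cycles obtained by rerouting through a chord, when chords are present.
\end{itemize}
Together with $H_{4,4}$-freeness and Lemma~\ref{lem:8-comp}, this confines all of $R\setminus\{x\}$, other than through $x$ itself, to attach only near $\{c_4,c_5,c_6\}\cup\{c_0,c_1,c_2\}$. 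Next, any vertex adjacent to $c_1$ together with the path $c_1c_0c_7\cdots$ gives further restrictions, and so does $x$'s own third neighbour if it exists; these follow from Lemma~\ref{lem:8-shapes}(iv) and Lemma~\ref{lem:8-shared}.

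I would then split on whether the chord $c_0c_2$ is present.

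\emph{No chord $c_0c_2$.} The pair $(c_0,c_2)$ is nonadjacent with the two common neighbours $c_1$ and $x$. By Lemma~\ref{lem:evenpath} both are interior to $P(c_0,c_2)$, and by Lemma~\ref{lem:8-deg} each of them has its two $P$-neighbours outside $\{c_0,c_2\}$, or else sits next to an endpoint with the other segment of length $7$. I would then locate the $9$-path explicitly. It must leave $c_0$ and $c_2$ through $\{c_7,c_3\}$ or through other $R$-neighbours, and by the first step these are heavily restricted. The expected outcome is one of two things: $c_1$ (or $x$) has degree too small to be interior with the required split, which is a kill by Lemma~\ref{lem:8-deg} or Lemma~\ref{lem:6-corner}-style domination; or the only candidate route forces a chordal path into $R$ of length $4$ at end-distance $\ne 1,4$, which is excluded by Lemma~\ref{lem:8-shapes}(iv). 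Where a small set $K\ni c_1$ or $K\ni x$ has exactly two outside neighbours, I would apply the gateway Lemma~\ref{lem:8-gate} directly, with the even path $c_0\,c_1\,c_2$ or $c_0\,x\,c_2$.

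\emph{Chord $c_0c_2$ present.} Then $x\,c_0c_2$ is a triangle, and $c_1,x$ are twins-like: both are adjacent to $c_0$ and $c_2$. I would compare $N(c_1)$ and $N(x)$. If $N(x)\subseteq N(c_1)$ or the reverse holds, a corner/domination argument applies, since the proof of Lemma~\ref{lem:6-corner}(ii) uses only Lemma~\ref{lem:evenpath} and the absence of $8$-cycles after a split, and $8\notin\Forb(8)$ would have to be replaced by an explicit count. Otherwise I would exhibit two $9$-paths for the nonadjacent pair $(x,c_1)$, obtained by swapping the roles of $c_0$ and $c_2$ via the chord.

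The consequence part of the statement is then immediate: by Lemma~\ref{lem:8-shapes}(i)--(ii) the neighbours on $C$ are pairwise at distance $\ge 2$, hence at distance $3$ or $4$. No three positions on $C_8$ are pairwise at distance in $\{3,4\}$: from $\{0,3\}$ the candidates $\{3,4,5\}$ and $\{6,7,0\}$ are disjoint, and from $\{0,4\}$ the candidates $\{3,4,5\}$ and $\{7,0,1\}$ are disjoint.

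The main obstacle is the chordless sub-case, where $H_{2,6}$ falls one edge short and the argument rests on locating $P(c_0,c_2)$. I expect to need a careful case analysis on the first edges of $P$ at $c_0$ and $c_2$, each branch closed by Lemma~\ref{lem:evenpath}, Lemma~\ref{lem:8-deg} or an $H_{2,6}$/$H_{4,4}$ found from the extended arcs.
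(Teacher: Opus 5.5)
\textbf{There is a genuine gap: neither branch of your plan reaches a contradiction.}

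\emph{Chordless branch.} It ends with an ``expected outcome'' of a case analysis on the first edges of $P(c_0,c_2)$ that is never carried out.

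\emph{Chorded branch.} This branch rests on a domination/corner argument that does not transfer to $L=8$. You note this yourself: the split lemma (Lemma~\ref{lem:6-split}) closes the case $|A|=1$ with an $8$-cycle, and $8\notin\Forb(8)$. No replacement count is supplied.

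\emph{The $H_{2,6}$ step.} The restrictions you derive around the $4$-cycle $x\,c_0c_1c_2$ are correct, for example that $c_3$ and $c_7$ have no neighbours in $R\setminus\{x\}$. But they only constrain the rest of the graph; by themselves they do not kill $x$.

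\textbf{The missing idea.} A two-line argument that needs no case split. Since $x\sim c_0$, Lemma~\ref{lem:8-shapes}(i) gives $x\not\sim c_1$, so $(c_1,x)$ is a nonadjacent pair. Go around $C$ from $c_1$ in either direction and finish with an attachment edge of $x$:
\[
c_1\,c_0\,c_7\,c_6\,c_5\,c_4\,c_3\,c_2\,x
\qquad\text{and}\qquad
c_1\,c_2\,c_3\,c_4\,c_5\,c_6\,c_7\,c_0\,x .
\]
These are two distinct $9$-paths joining $c_1$ and $x$, contradicting \ref{f:obs}. This is the paper's proof. It is the special case $w=c_1$ of the idea behind Lemma~\ref{lem:hamkill}(ii).

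The argument uses only the edges of $C$ and the two attachment edges. So it holds for every chord set and regardless of what else lies in $R$. In particular, the ``two $9$-paths for $(x,c_1)$'' you planned to build via the chord $c_0c_2$ already exist without it, and the split on that chord is unnecessary.

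Your derivation of the consequence part is correct: distances $3$ or $4$ only, and no three positions on $C_8$ pairwise at such distances.
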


\begin{proof}
Suppose $x\sim c_0$ and $x\sim c_2$; by Lemma~\ref{lem:8-shapes}(i),
$x\not\sim c_1$. Then
\[
c_1\,c_0\,c_7\,c_6\,c_5\,c_4\,c_3\,c_2\,x
\qquad\text{and}\qquad
c_1\,c_2\,c_3\,c_4\,c_5\,c_6\,c_7\,c_0\,x
\]
are two distinct $9$-paths joining the nonadjacent pair $(c_1,x)$,
contradicting \ref{f:obs}. The proof uses only the edges of $C$ and the
two attachment edges, so it holds for every chord set and in the presence
of arbitrary further vertices. For the consequence: the vertices at
distance $3$ or $4$ from $c_0$ are $c_3,c_4,c_5$, no two of which are at
distance $3$ or $4$ from each other, so $|N_C(x)|\le 2$ with the shapes
stated.
\end{proof}

\begin{corollary}[Objects]\label{cor:8-objects}
\leavevmode
\begin{enumerate}[label=(\roman*),leftmargin=2.6em]
\item A singleton component $x$ has $N_C(x)=\{c_i,c_j\}$ with
$\dist(c_i,c_j)\in\{3,4\}$, \emph{and the chord $c_ic_j$ is present}.
\item In a $K_2$-component $uv$ with $N_C(u)=\{a\}$: $a\in N_C(v)$.
\item In a $K_{1,2}$-component $u_1\,v\,u_2$: $N_C(v)\neq\emptyset$, and
if $N_C(u_i)=\{a\}$ then $a\in N_C(v)$.
\item No two singletons, and no two leaves of one $K_{1,2}$, have equal
neighbourhoods.
\end{enumerate}
\end{corollary}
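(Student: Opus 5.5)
Each part follows from Lemma~\ref{lem:no-dist2} combined with the corner, degree and gateway lemmas. Throughout, $N_C$ of any $R$-vertex is empty, a singleton, a distance-$3$ pair, or an antipodal pair.

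\emph{(i).} A singleton $x$ has $\deg(x)=|N_C(x)|\ge 2$ by \ref{f:2conn}. Lemma~\ref{lem:no-dist2} then gives $N_C(x)=\{c_i,c_j\}$ with $\dist(c_i,c_j)\in\{3,4\}$, so $\deg(x)=2$. If $c_i\not\sim c_j$, then $x$ is a vertex of degree $2$ whose neighbourhood is not a clique, contradicting the Degree rule (Lemma~\ref{lem:8-deg}). Hence the chord $c_ic_j$ is present.

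\emph{(ii).} Let $uv$ be a $K_2$-component with $N_C(u)=\{a\}$, so $N(u)=\{a,v\}$. Suppose $a\notin N_C(v)$. Then $a\not\sim v$, since $v$ has no other $C$-neighbour on $a$'s side. So $N(u)$ is not a clique while $\deg(u)=2$, again contradicting Lemma~\ref{lem:8-deg}.

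\emph{(iii).} In a $K_{1,2}$-component $u_1vu_2$, the leaves are nonadjacent with common neighbour $v$. If $N_C(v)=\emptyset$, then $N(v)=\{u_1,u_2\}$ is not a clique and $\deg(v)=2$, contradicting Lemma~\ref{lem:8-deg}. If $N_C(u_i)=\{a\}$, then $N(u_i)=\{a,v\}$, and the same degree argument forces $a\sim v$, i.e.\ $a\in N_C(v)$.

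\emph{(iv).} Two singletons are nonadjacent; if their neighbourhoods were equal they would be twins, contradicting \ref{f:twins}. The same holds for the two leaves of one $K_{1,2}$, which are nonadjacent.

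The only delicate point is checking that Lemma~\ref{lem:8-deg} applies in each use: the degree-$2$ vertex must have two \emph{nonadjacent} neighbours. In (ii) this needs $v\not\sim a$; in (iii) the leaves are nonadjacent by the component's shape. Everything else is immediate.
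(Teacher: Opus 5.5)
Your proof is correct and follows the paper's argument essentially verbatim: Lemma~\ref{lem:no-dist2} together with the Degree rule (Lemma~\ref{lem:8-deg}) gives (i)--(iii), and \ref{f:twins} gives (iv). Two cosmetic points: in (ii) the clause about $v$'s ``side'' is unnecessary, since $a\notin N_C(v)$ means $a\not\sim v$ by definition; and neither the gateway lemma nor the corner lemma is needed (the corner lemma is in any case an $L=6$ tool that relies on $8\in\Forb(6)$, so it would not be available here).
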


\begin{proof}
(i) $2$-connectivity gives $|N_C(x)|\ge 2$ and Lemma~\ref{lem:no-dist2}
gives $|N_C(x)|=2$ at distance $3$ or $4$; since $\deg(x)=2$,
Lemma~\ref{lem:8-deg} forces $N(x)$ to be a clique. (ii) $\deg(u)=2$
forces $N(u)=\{a,v\}$ to be a clique: $a\sim v$. (iii) The leaves are
nonadjacent neighbours of $v$, so $\deg(v)\ge 3$ by
Lemma~\ref{lem:8-deg}; the only $R$-neighbours of $v$ are the leaves, so
$v$ has a $C$-neighbour. The singleton-leaf claim is as in (ii).
(iv) Twins, \ref{f:twins}.
\end{proof}

\begin{theorem}[Chordless $C$]\label{thm:8-chordless}
There is no $2$-connected nontrivial uniquely $C_9$-saturated graph whose
longest even cycle of length at most $12$ is a chordless $8$-cycle.
\end{theorem}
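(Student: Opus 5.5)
The plan is to show that a chordless $C$ leaves no admissible outside structure at all, and then to kill the bare $8$-cycle. The argument runs through the classification of Lemma~\ref{lem:8-comp} and Corollary~\ref{cor:8-objects}.

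\emph{Singletons.} With $C$ chordless, Corollary~\ref{cor:8-objects}(i) forbids every singleton component, since a singleton needs its two attachment vertices to be joined by a chord.

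\emph{$K_2$-components.} Let $uv$ be a $K_2$-component. If some end, say $u$, has $|N_C(u)|=1$, then by Corollary~\ref{cor:8-objects}(ii) its attachment vertex $a$ lies in $N_C(v)$. By Lemma~\ref{lem:no-dist2}, $N_C(v)$ is $\{a\}$, $\{a,a\pm3\}$ or $\{a,a+4\}$.
\begin{itemize}[leftmargin=2em]
\item If $N_C(v)=\{a\}$, then $a$ is a cut vertex, which is impossible.
\item Otherwise $N_C(v)=\{a,a'\}$. Then $a\,u\,v\,a'$ is a chordal path of length $3$, and $a\,v\,a'$ one of length $2$. I would then take $K=\{u,v\}$: its outside neighbourhood is $\{a,a'\}$, and $a\not\sim a'$ because $C$ is chordless. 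The $a$--$a'$ path $a\,v\,a'$ has even length $2$ with interior in $K$, so the Gateway lemma (Lemma~\ref{lem:8-gate}) kills it.
\end{itemize}
If instead both ends have $|N_C|=2$, Lemma~\ref{lem:no-dist2} makes each attachment set a pair at distance $3$ or $4$. The cross pairs must obey Lemma~\ref{lem:8-shapes}(iii): distinct cross attachments are at distance $3$ or $4$. Here I would check by hand the few placements on $C_8$. Each either yields a vertex adjacent to two nonadjacent $C$-vertices $c,c'$ with a second $9$-path, as in the proof of Lemma~\ref{lem:no-dist2}, or admits the Gateway lemma with $K=\{u,v\}$ and $N(K)\setminus K$ of size $2$. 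When $N(K)\setminus K$ has size $3$ or $4$, I would look directly for a nonadjacent pair $(c,c')$ on $C$ whose $8$-arc is joined by a second $9$-path that reroutes through $u$ or $v$, replacing an arc of length $k$ by a detour of length $k$. This is the mechanism used repeatedly in the $L=10$ and $L=12$ cases.

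\emph{$K_{1,2}$-components.} Let $u_1\,v\,u_2$ be a $K_{1,2}$-component. Lemma~\ref{lem:8-shared} and the leaf constraint of Corollary~\ref{cor:8-objects}(iii) force each leaf with a single attachment $a_i$ to have $a_i\in N_C(v)$. But then $u_1,u_2$ would share the neighbour $v$ and, if $a_1=a_2$, a common $C$-neighbour. Together with the length-$4$ chordal path $a_1u_1vu_2a_2$, which needs distance $1$ or $4$ by Lemma~\ref{lem:8-shapes}(iv), this leaves essentially $N_C(v)=\{a,a+4\}$ with leaves on $a$ and $a+4$. That is a set $K=\{u_1,v,u_2\}$ with $N(K)\setminus K=\{a,a+4\}$ and an even path $a\,v\,a+4$, so the Gateway lemma kills it again. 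Leaves with two attachments are excluded, since by Lemma~\ref{lem:8-shared} their sets would have to be disjoint while satisfying the distance-$\{1,4\}$ cross rule, which is impossible on $C_8$ by the same triple-counting as in Lemma~\ref{lem:8-comp}.

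\emph{The bare cycle.} Once $R=\emptyset$, $G=C_8$, which has fewer than $9$ vertices, contradicting nontriviality. Alternatively, the pair $(c_0,c_2)$ has no $9$-path.

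I expect the main obstacle to be the $K_2$-case in which both ends carry two attachments, say $N_C(u)=\{c_0,c_4\}$ and $N_C(v)=\{c_1,c_5\}$ or $\{c_3,c_7\}$. Here the Gateway lemma does not apply directly because $N(K)\setminus K$ has four vertices. I would first try the pair $(c_0,c_4)$ and count its $9$-paths explicitly: each of the two $4$-arcs extends through $u$ or through $u\,v$ to a length-$8$ route. I expect to find either zero or at least two such paths, using the arc symmetry to pair them up.
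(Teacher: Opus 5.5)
Your overall route is the paper's: singletons die by Corollary~\ref{cor:8-objects}(i), the other components die by the Gateway lemma, and then $R=\emptyset$. However, the $K_{1,2}$ case rests on a false step. You exclude leaves with two attachments using only three ingredients: the leaf--leaf rule (cross distance $1$ or $4$, Lemma~\ref{lem:8-shapes}(iv)), disjointness (Lemma~\ref{lem:8-shared}), and the triple count of Lemma~\ref{lem:8-comp}. That count needs all three pairwise distances in $\{1,4\}$, but a leaf's own pair may be at distance $3$. For example, $N_C(u_1)=\{c_0,c_3\}$ with $N_C(u_2)=\{c_4,c_7\}$ (or $\{c_4\}$) is disjoint and every cross distance is $1$ or $4$, so nothing you invoke kills it.

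The missing ingredient is the centre. $N_C(v)\neq\emptyset$ by Corollary~\ref{cor:8-objects}(iii). For $w\in N_C(v)$, the length-$3$ chordal paths $a\,u_1\,v\,w$ force $w$ to equal, or lie at distance $3$--$4$ from, both vertices of $N_C(u_1)$. Two vertices at distance $3$ or $4$ have disjoint distance-$\{3,4\}$ lists, so $N_C(v)\subseteq N_C(u_1)$. If $u_2$ has a single attachment, it lies in $N_C(v)\subseteq N_C(u_1)$ by Corollary~\ref{cor:8-objects}(iii), contradicting disjointness. If $u_2$ has two, the same argument gives $N_C(v)\subseteq N_C(u_1)\cap N_C(u_2)=\emptyset$.

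The same observation settles the $K_2$ case with two attachments at each end, which you leave open. Lemma~\ref{lem:8-shapes}(iii) on $a\,u\,v\,b$, plus the disjoint lists, gives $N_C(v)\subseteq N_C(u)$ and hence equality. So $N(K)\setminus K$ always has exactly two vertices, and the Gateway lemma applies with the even path $a\,u\,a'$.

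Your ``main obstacle'' never arises. Against $N_C(u)=\{c_0,c_4\}$, both $\{c_1,c_5\}$ and $\{c_3,c_7\}$ put a cross pair at distance $1$, and $c_0\,u\,v\,c_1$ (resp.\ $c_0\,u\,v\,c_7$) closes a $10$-cycle. That is the very rule you had just cited. Your proposed fallback would not have worked in any case: on $C_8$ no two distinct vertices are joined by an $8$-arc, since arcs there have at most $7$ edges. So the $L=10,12$ rerouting mechanism has no $9$-path along $C$ to compete with.
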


\begin{proof}
Assume $C$ has no chord, so $C$-vertices are adjacent exactly at
$C$-distance $1$ and every pair at distance $3$ or $4$ is nonadjacent.
Since $|V(G)|\ge 9$, $G[R]$ has a component $K$: a singleton, $K_2$ or
$K_{1,2}$ (Lemma~\ref{lem:8-comp}).

\emph{Singleton:} impossible, since Corollary~\ref{cor:8-objects}(i)
requires a chord.

\emph{$K_2$ with $A=N_C(u)$, $B=N_C(v)$}, both nonempty (a bare end would
have degree $1$). If $|A|=|B|=1$: by (ii) $A=B=\{a\}$ and $a$ is a cut
vertex. If $|A|=1<|B|$: $a\in B$ by (ii), so
$N(K)\setminus K=B=\{a,b\}$ with $a\not\sim b$
(Lemma~\ref{lem:no-dist2}: $\dist(a,b)\in\{3,4\}$), and $a\,v\,b$ is an
even path through $K$: Lemma~\ref{lem:8-gate}. If $|A|=|B|=2$, say
$A=\{a,a'\}$: each $b\in B$ is equal to, or at distance $3$ or $4$ from,
both $a$ and $a'$ (Lemma~\ref{lem:8-shapes}(iii) on the cross paths
$a\,u\,v\,b$), and the distance-$\{3,4\}$ lists of $a$ and $a'$ are
disjoint (as in Lemma~\ref{lem:no-dist2}); hence $B\subseteq A$, so
$B=A$, $N(K)\setminus K=A$, and the even path $a\,u\,a'$ closes the case
by Lemma~\ref{lem:8-gate}.

\emph{$K_{1,2}$ $u_1\,v\,u_2$ with $A_i=N_C(u_i)$, $A_v=N_C(v)$}, all
nonempty (leaves by $2$-connectivity, the centre by
Corollary~\ref{cor:8-objects}(iii)). By Lemma~\ref{lem:8-shared},
$A_1\cap A_2=\emptyset$. If some $|A_i|=2$, say $A_1=\{a,a'\}$: each
element of $A_v$ is equal to, or at distance $3$--$4$ from, both $a$ and
$a'$ (length-$3$ paths $a\,u_1\,v\,w$), so $A_v\subseteq A_1$; by (iii)
the other leaf's attachment lies in $A_v\subseteq A_1$ if $|A_2|=1$,
contradicting disjointness, and if $|A_2|=2$ the same argument gives
$A_v\subseteq A_2$, so $A_v\subseteq A_1\cap A_2=\emptyset$,
contradicting (iii). So $A_1=\{a\}$, $A_2=\{b\}$, both in $A_v$ by
(iii), distinct by disjointness; the cross path $a\,u_1\,v\,u_2\,b$
forces $\dist(a,b)\in\{1,4\}$ while $a,b\in A_v$ forces
$\dist(a,b)\in\{3,4\}$: so $\dist(a,b)=4$ and $A_v=\{a,b\}$. Then
$N(K)\setminus K=\{a,b\}$ with $a\not\sim b$, and $a\,u_1\,v\,u_2\,b$ is
an even path through $K$: Lemma~\ref{lem:8-gate}.

So $R=\emptyset$ and $|V(G)|=8<9$: $G$ is not nontrivial.
\end{proof}

\begin{lemma}[Hamiltonian-path kill lemma]\label{lem:hamkill}
Let $x\in R$ and $S=N_C(x)$. Then
(i) $G[C]$ has no Hamiltonian path whose two ends both lie in $S$, and
(ii) for every $w\in V(C)\setminus S$, $G[C]$ has at most one Hamiltonian
path with one end $w$ and the other end in $S$.
\end{lemma}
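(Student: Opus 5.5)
The plan is to append $x$ to a Hamiltonian path of $G[C]$. Since $|V(C)|=8$, a Hamiltonian path $Q$ of $G[C]$ is a path on $8$ vertices with $7$ edges. Adding $x$ at an end of $Q$ adjacent to $x$ produces a path on $9$ vertices.

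For part~(i), suppose $Q$ has both ends $s,s'$ in $S$. Then $x\,s\,Q\,s'\,x$ is a cycle on $9$ vertices, since $x\notin V(C)$ and $x$ is adjacent to both ends. This contradicts $C_9$-freeness (\ref{f:obs}), so no such $Q$ exists. This step is immediate.

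For part~(ii), fix $w\in V(C)\setminus S$. Then $w\not\sim x$, because $S=N_C(x)$ and $w\in V(C)$. Each Hamiltonian path $Q$ of $G[C]$ from $w$ to some $s\in S$ extends to the $9$-path $Q+sx$ with ends $w$ and $x$. Distinct such $Q$ yield distinct $9$-paths: deleting $x$ from $Q+sx$ recovers $Q$, so the map $Q\mapsto Q+sx$ is injective. By \ref{f:obs} the nonadjacent pair $(w,x)$ has exactly one $9$-path, so there is at most one such $Q$.

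No step here is a real obstacle. The only checks are that $w\not\sim x$ holds by the definition of $S$, and that the end $s$ of $Q$ is adjacent to $x$ because $s\in S$. Both facts hold independently of the chord set of $C$ and of any further vertices in $R$, so the lemma applies in full generality.
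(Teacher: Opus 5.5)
Your proposal is correct and follows the paper's proof essentially verbatim. For (i) you close a Hamiltonian path with both ends in $S$ through $x$ into a $9$-cycle. For (ii) you extend each Hamiltonian path from $w$ to $S$ by its edge to $x$, which gives distinct $9$-paths for the nonadjacent pair $(w,x)$ and contradicts uniqueness.
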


\begin{proof}
A Hamiltonian path of $G[C]$ has $8$ vertices and $7$ edges. (i) Such a
path with ends $a,b\in S$ closes through $a\,x\,b$ into a $9$-cycle.
(ii) Such a path from $w$ to $s\in S$, extended by the edge $sx$, is a
$9$-path from $w$ to $x$, and $w\not\sim x$ since $N(x)\cap V(C)=S$. Two
distinct Hamiltonian paths of this kind---whether they end at the same
vertex of $S$ or not---give two distinct $9$-paths joining the
nonadjacent pair $(w,x)$, contradicting \ref{f:obs}.
\end{proof}

Say that $X$ \emph{kills} $S$ when a configuration forbidden by
Lemma~\ref{lem:hamkill} occurs in $G[C]=C_8+X$. Adding chords only adds
Hamiltonian paths, so killing is upward closed in $X$ and the chord sets
leaving a given $S$ alive form a downward closed family.

\begin{corollary}\label{cor:8-lattice}
By Lemma~\ref{lem:no-dist2} and Corollary~\ref{cor:8-objects} the
$C$-neighbourhood of every vertex of $R$ is one of the $20$
\emph{admissible} sets: $\{c_i\}$ ($8$ of them), $\{c_i,c_{i+3}\}$ ($8$),
$\{c_i,c_{i+4}\}$ ($4$). If $X$ kills all $20$, then $R=\emptyset$ and
$G$ is not nontrivial. Hence the chord set of $C$ leaves some admissible
set alive.
\end{corollary}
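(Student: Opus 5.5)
The corollary has two parts. The first is that the $C$-neighbourhood of every $x\in R$ is one of the $20$ admissible sets. The second is the contrapositive: if $X$ kills all of them, then $R$ is empty. Both parts come from results already proved.

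\emph{Step 1: admissibility.} Lemma~\ref{lem:no-dist2} says that $N_C(x)$ is empty, a singleton $\{c_i\}$, a distance-$3$ pair $\{c_i,c_{i+3}\}$, or an antipodal pair $\{c_i,c_{i+4}\}$. To get one of the $20$ listed sets, I only need to rule out $N_C(x)=\emptyset$. By Lemma~\ref{lem:8-comp}, the component $K$ of $G[R]$ containing $x$ is a singleton, a $K_2$, or a $K_{1,2}$.
\begin{itemize}
\item If $K$ is a singleton, then $2$-connectivity gives $\deg(x)\ge 2$, so $N_C(x)\ne\emptyset$.
\item If $K$ is a $K_2$, then each end has degree at least $2$ but only one $R$-neighbour, so each end has a $C$-neighbour.
\item If $K$ is a $K_{1,2}$, then each leaf has a $C$-neighbour for the same reason, and the centre has one by Corollary~\ref{cor:8-objects}(iii).
\end{itemize}
So $N_C(x)$ is nonempty. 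Counting the options: there are $8$ singletons and $8$ distance-$3$ pairs, one for each starting index modulo $8$. There are $4$ antipodal pairs, because $\{c_i,c_{i+4}\}=\{c_{i+4},c_{i+8}\}$.

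\emph{Step 2: all killed forces $R=\emptyset$.} Suppose $X$ kills every admissible set, and suppose some $x\in R$ exists. Put $S=N_C(x)$. By Step~1, $S$ is admissible, so $X$ kills $S$. That means $G[C]=C_8+X$ contains a configuration forbidden by Lemma~\ref{lem:hamkill} for this $S$: either a Hamiltonian path with both ends in $S$, or a vertex $w\notin S$ that is the end of two Hamiltonian paths ending in $S$.

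Lemma~\ref{lem:hamkill} applies to the actual vertex $x$, because it uses only $N_C(x)=S$ and the edges of $G[C]$. Its hypotheses are therefore met, and we get a $9$-cycle or two $9$-paths joining the nonadjacent pair $(w,x)$. This contradicts \ref{f:obs}. Hence $R=\emptyset$, so $|V(G)|=8<9$ and $G$ is not nontrivial.

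The final sentence of the corollary is just the contrapositive of Step~2.

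The only step needing care is the nonemptiness in Step~1. It relies on the degree and centre facts from Corollary~\ref{cor:8-objects} and on the component classification in Lemma~\ref{lem:8-comp}. I expect no genuine obstacle: the argument is bookkeeping on results already in hand.
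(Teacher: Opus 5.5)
Your proof is correct and follows the paper's route. Lemma~\ref{lem:no-dist2} restricts $N_C(x)$ to the listed shapes; nonemptiness comes from $2$-connectivity together with Lemma~\ref{lem:8-comp} and Corollary~\ref{cor:8-objects}(iii); and Lemma~\ref{lem:hamkill}, applied to any $x\in R$, excludes every killed set, so killing all $20$ forces $R=\emptyset$. You have simply written out the bookkeeping that the paper's one-line justification leaves implicit.
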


\begin{proposition}\label{prop:surviving}
Exactly $6{,}891$ of the $2^{20}$ chord sets of $C_8$ leave an admissible
attachment set alive. All have $|X|\le 9$, with counts by
$|X|=0,1,\dots,9$:
\[
1,\;\; 20,\;\; 170,\;\; 736,\;\; 1654,\;\; 2036,\;\; 1474,\;\; 632,\;\;
152,\;\; 16.
\]
In particular every chord set with at least $10$ chords is dead.
\end{proposition}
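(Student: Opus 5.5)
This is a finite computation, and I would verify it by a direct, reproducible enumeration over all subsets of the $20$ chords of $C_8$: the $\binom82-8=20$ pairs at $C$-distance $2,3,4$. The plan is as follows.

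For each chord set $X$, form $H=C_8+X$ and list all Hamiltonian paths of $H$ as unordered pairs of endpoints with multiplicity. Since $H$ has only $8$ vertices, a depth-first search from each start vertex suffices. Then test each of the $20$ admissible sets $S$ of Corollary~\ref{cor:8-lattice} against the two clauses of Lemma~\ref{lem:hamkill}:
\begin{enumerate}[label=(\alph*),leftmargin=2.6em]
\item $S$ is killed if some Hamiltonian path has both ends in $S$;
\item otherwise, $S$ is killed if some $w\notin S$ is an end of at least two Hamiltonian paths whose other end lies in $S$.
\end{enumerate}
$X$ survives if some $S$ is not killed. The count of surviving $X$, stratified by $|X|$, is then compared with the displayed vector.

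The naive loop has $2^{20}$ iterations, each involving at most a few thousand path extensions, so it is cheap. I would still exploit the monotonicity noted after Lemma~\ref{lem:hamkill}: killing is upward closed in $X$. This means one may enumerate by increasing $|X|$ and only extend surviving sets, which is an Apriori-style closure. The same monotonicity yields the final claim directly. If the enumeration of all $\binom{20}{9}$ and $\binom{20}{10}$ levels shows that no $10$-chord set survives, then every superset is dead, so it suffices to check level $10$.

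As a sanity control I would hand-check the small levels. At $|X|=0$ the single set survives: in a bare $C_8$ the Hamiltonian paths are the $8$ edge-deleted arcs, whose ends are adjacent, so no admissible $S$ containing two points can have both ends in $S$, and a singleton $S=\{c_i\}$ has exactly two such paths, ending at $c_{i\pm1}$, i.e.\ one per $w$. At $|X|=1$ all $20$ single chords should survive, consistent with the displayed $20$. I would also confirm that the level counts are invariant under the dihedral action, i.e.\ that they are consistent with orbit sizes.

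The main obstacle is not mathematical but certification. I need to make sure that ``at most one Hamiltonian path'' counts paths as unordered vertex sequences, so that each path is not counted twice for its two orientations. I also need to get clause (ii) of Lemma~\ref{lem:hamkill} right: paths ending at different vertices of $S$ count together. To guard against these errors I would run the known-answer controls above inside the same script before the main count, and report the per-level counts, whose sum $6{,}891$ is checked against the total.
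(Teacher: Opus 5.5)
Your plan is correct and is essentially the paper's own argument: a finite computation that applies the two clauses of Lemma~\ref{lem:hamkill} to the $20$ admissible sets, uses the downward closure of the surviving family to sweep the chord lattice, and compares the per-level counts. The differences are only in implementation. The paper precomputes the $20{,}160$ Hamiltonian paths of $K_8$ with their $20$-bit chord masks, so that ``$X$ kills $S$'' becomes a family of subset tests, and its known-answer control is an independent brute-force scan of all $|X|\le 4$; you instead recompute the Hamiltonian paths for each chord set by DFS and hand-check levels $0$ and $1$.
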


\begin{proof}
Computation (Appendix~\ref{app:enum}): a Hamiltonian path of $C_8+X$ is a
Hamiltonian path of $K_8$ all of whose non-$C$ edges lie in $X$, so after
precomputing the $20{,}160$ Hamiltonian paths of $K_8$ with their
$20$-bit chord masks, ``$X$ kills $S$'' is a family of subset tests. By
downward closure, one depth-first sweep ($18{,}163$ chord sets visited)
enumerates the surviving family completely. An independent brute-force
scan of all chord sets with $|X|\le 4$ reproduces the counts
$1,20,170,736,1654$ exactly.
\end{proof}

\begin{theorem}\label{thm:l8}
There is no $2$-connected nontrivial uniquely $C_9$-saturated graph whose
longest even cycle of length at most $12$ is an $8$-cycle.
\end{theorem}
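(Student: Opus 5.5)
By Theorem~\ref{thm:8-chordless} we may assume $C$ has at least one chord. By Corollary~\ref{cor:8-lattice} and Proposition~\ref{prop:surviving}, the chord set $X$ of $C$ is one of the $6{,}891$ sets that leave some admissible attachment set alive. The plan is to reduce, for each surviving $X$, to an explicitly bounded finite family of candidate graphs, and then eliminate that family by the controlled enumeration of Appendix~\ref{app:enum}, in the same way as Theorem~\ref{thm:l6}.

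\emph{Step 1: reduce the chord sets.} For each surviving $X$, I will first discard any $X$ for which $C_8+X$ itself contains a cycle of length in $\Forb(8)$, an $H_{2,6}$, $H_{3,5}$ or $H_{4,4}$ inside $V(C)$, or a nonadjacent pair of $C$-vertices joined by two Hamiltonian-type $9$-paths. Each of these is a monotone kill, so it survives the addition of $R$. I will also record, for each remaining $X$, its list $\mathcal S(X)$ of live admissible sets. I expect only a few hundred chord sets to have nonempty usable structure; the introduction's figure of $300$ is the target.

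\emph{Step 2: build the component types.} For each remaining $X$, I will list the possible components of $G[R]$ using Lemma~\ref{lem:8-comp} and Corollary~\ref{cor:8-objects}. Every vertex of such a component has $N_C$ either empty (allowed only for a $K_{1,2}$ leaf? no: leaves need $C$-neighbours, so empty is possible only for an end of a $K_2$ whose partner compensates) or in $\mathcal S(X)$.
\begin{itemize}
\item A \emph{singleton} has $N_C\in\mathcal S(X)$ of size $2$, with the corresponding chord in $X$.
\item A \emph{$K_2$} has ends whose $C$-neighbourhoods satisfy (ii) and the cross rule Lemma~\ref{lem:8-shapes}(iii).
\item A \emph{$K_{1,2}$} has disjoint leaf sets (Lemma~\ref{lem:8-shared}) and a centre attached as in (iii). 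Its cross paths of length $4$ must obey Lemma~\ref{lem:8-shapes}(iv).
\end{itemize}
Because admissible sets have size at most $2$, each component has at most three vertices and finitely many attachment patterns, so no growth cap is needed here. Within each type I will apply the local filters: degree rule (Lemma~\ref{lem:8-deg}), no internal twins, no cut vertex, and the gateway lemma (Lemma~\ref{lem:8-gate}) whenever the component's external neighbourhood is a nonadjacent pair. Swappable $K_2$s will be removed via Lemma~\ref{lem:swapred}, whose proof only uses Lemma~\ref{lem:swap} and so applies verbatim at $L=8$. The analogue of Lemma~\ref{lem:g2complete} handles the degenerate case where what remains is complete.

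\emph{Step 3: bound the multisets.} No two singletons may share a neighbourhood, so the number of singletons is at most $|\mathcal S(X)|\le 20$. The other types are bounded similarly, using twin-freeness across components together with the $C_{10}$, $C_{12}$ and $C_{14}$ kills between two components. I would then run the multiset search in nondecreasing order, pruning by (m1)--(m3) plus cross-component twins. Every state on at least $9$ vertices is tested against the exact criterion of \ref{f:obs} and $2$-connectivity. A depth counter certifies completeness, and the known-answer controls described in the appendix guard the code.

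\emph{Main obstacle.} The main obstacle is making the finiteness argument of Step~3 rigorous, i.e.\ proving by hand that the multiset depth is bounded, so that a zero cap-hit count really certifies completeness. I would first try to show that two vertices of $R$ with live sets giving distinct ends create a $12$- or $14$-cycle through $C$ unless their attachments interleave in a controlled way, which would give an explicit bound of a handful of components per chord set.
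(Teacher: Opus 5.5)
Your plan is essentially the paper's proof: the chordless case by Theorem~\ref{thm:8-chordless}, reduction to the $6{,}891$ surviving chord sets via Lemma~\ref{lem:hamkill}, a finite list of singleton/$K_2$/$K_{1,2}$ types with live admissible attachments, and a multiset search pruned by monotone kills, tested exactly on every configuration with at least $9$ vertices, and certified by a zero depth-cap count. Three corrections follow. First, your declared ``main obstacle'' is not one: $C+X$ together with any sub-multiset of the components of a hypothetical $G$ is an induced subgraph of $G$, so no prefix of $G$'s multiset is ever pruned, and zero cap hits therefore certify completeness with no hand bound on the depth. Second, your Step~1 filters are vacuous, since no cycle of length $\ge 9$, no $H_{k,8-k}$ and no $9$-path fits on the $8$ vertices of $C$. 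Third, no vertex of $R$ has empty $N_C$ at $L=8$: a bare $K_2$ end would have degree $1$, and a $K_{1,2}$ centre is attached by Corollary~\ref{cor:8-objects}(iii).
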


\begin{proof}
The chordless case is Theorem~\ref{thm:8-chordless}. Otherwise, by
Lemma~\ref{lem:8-comp} every component of $G[R]$ is a singleton, $K_2$ or
$K_{1,2}$; by Lemma~\ref{lem:no-dist2} every vertex of $R$ has an
admissible $C$-neighbourhood; and by Corollary~\ref{cor:8-lattice} and
Proposition~\ref{prop:surviving}, $X$ is one of the $6{,}891$ surviving
chord sets. The residual family---$C_8+X$ plus a multiset of such
components---was enumerated completely (Appendix~\ref{app:enum}): the
chord-set-independent candidate list has $176$ component types ($12$
singletons, $56$ $K_2$'s, $108$ $K_{1,2}$'s, closed under all cross
conditions of Lemmas~\ref{lem:8-shapes} and~\ref{lem:8-comp} including
the degenerate equal-attachment cases); after the
Lemma~\ref{lem:hamkill} filter only $300$ of the $6{,}891$ chord sets
admit any component at all, with at most $12$ admissible types each. The
multiset search over these, pruned by two monotone necessary conditions
(a cycle of forbidden length; a nonadjacent pair with two $9$-paths) and
tested exactly on every configuration with at least $9$ vertices,
terminated without ever reaching its depth cap and found no survivor.
Hence no such $G$ exists.
\end{proof}

\section{Proof of Theorem~\ref{thm:main}, and remarks}\label{sec:conclusion}

\begin{proof}[Proof of Theorem~\ref{thm:main}]
Let $G$ be a nontrivial uniquely $C_9$-saturated graph. By \ref{f:2conn}
we may take $G$ $2$-connected, and by \ref{f:evencycle} it contains an
even cycle of length at most $12$; let $C$ be a longest such cycle, of
length $L\in\{4,6,8,10,12\}$. Proposition~\ref{prop:l12} excludes $L=12$,
Proposition~\ref{prop:l10} excludes $L=10$, Proposition~\ref{prop:l4}
excludes $L=4$, Theorem~\ref{thm:l6} excludes $L=6$, and
Theorem~\ref{thm:l8} excludes $L=8$.
\end{proof}

\subsection*{On the nature of the evidence}

The proof is a natural-language case analysis except for three finite
computations, each stated with its scope and its known-answer controls in
Appendix~\ref{app:enum}: the Hamiltonian-path count of
Proposition~\ref{prop:surviving}, the $L=8$ multiset sweep of
Theorem~\ref{thm:l8}, and the two-phase component enumeration of
Theorem~\ref{thm:l6}. Each computation is elementary (subset tests, cycle
detection, bounded path counting on graphs with at most ${\sim}16$
vertices), deterministic, and replayable in minutes on commodity hardware
from the ancillary scripts; every fact the computations rely on is stated
and proved as a lemma in the text, so a reader can re-implement them
independently of our code. Nothing here is formalised in a proof
assistant; a machine-checkable certificate format for the enumerations is
a natural next step.

\subsection*{Remarks}

\begin{remark}
The case $t=8$ of Conjecture~\ref{conj:ww} remains, to our knowledge,
without a published proof. As a small independent check we verified by an
isomorph-free exhaustive enumeration that no nontrivial uniquely
$C_8$-saturated graph on exactly eight vertices exists ($12{,}346$
isomorphism classes considered, $9$ of them $C_8$-saturated, none
uniquely). The methods of this paper---in particular
Lemma~\ref{lem:hamkill}, whose proof works verbatim for any $t$ with $C$
a $(t-1)$-cycle---seem a reasonable route both to a published proof of
$t=8$ and to the next open case $t=10$, though the number of even-cycle
cases grows with $t$.
\end{remark}

\begin{remark}
The finiteness theorem of \cite{WW} yields no explicit bound on the order
of a uniquely $C_t$-saturated graph. Our proof, being a refutation, needs
none; it is perhaps worth noting that every residual family arising in
the analysis lived on at most ${\sim}16$ vertices.
\end{remark}

\appendix

\section{The three computations}\label{app:enum}

All computations are provided as ancillary Python files
(\texttt{core\_family\_f6\_widened.py}, \texttt{core\_family\_f8.py},
with the shared validated checker \texttt{cycle\_saturation.py}). Each
script runs its known-answer controls first, in the same process, and
aborts if any control fails.

\subsection*{A.1\quad The validated checker}

The primitive underlying all controls is a cycle counter and a
unique-saturation predicate implemented directly from the definitions.
The cycle counter was validated against a brute-force permutation count
on the full atlas of graphs with at most $7$ vertices ($1{,}253$ graphs,
$5{,}972$ (graph, length) pairs, zero mismatches), and the predicate
reproduces the published classifications for $t=3$ and $t=5$ and
Theorems~4.1--4.2 of \cite{WW} at their minimal orders by exhaustive
search.

\subsection*{A.2\quad The chord lattice
(Proposition~\ref{prop:surviving})}

The $20{,}160$ Hamiltonian paths of $K_8$ are precomputed with their
$20$-bit chord masks; ``$X$ kills $S$'' reduces to subset tests
(Lemma~\ref{lem:hamkill}). The surviving family is downward closed, so a
depth-first sweep visiting $18{,}163$ chord sets enumerates it
completely: $6{,}891$ survivors, distributed over $|X|$ as stated.
Control: a brute-force scan of all chord sets with $|X|\le 4$ reproduces
the survivor counts exactly.

\subsection*{A.3\quad The $L=8$ sweep (Theorem~\ref{thm:l8})}

For each surviving chord set, the admissible component types (from the
$176$-type list) are filtered by Lemma~\ref{lem:hamkill} and a generic
monotone kill test; $300$ chord sets admit a type (at most $12$ each).
The multiset search proceeds in nondecreasing type order with two
monotone prunes (a forbidden cycle length in $\{9,10,12,14,16\}$; a
nonadjacent pair with two $9$-paths), and tests every configuration on
$\ge 9$ vertices against the exact criterion of \ref{f:obs}. Observed
counts: $628$ search nodes, depth cap never reached (so the enumeration
is complete, not truncated), no survivor, $72$\,s of CPU. Controls:
agreement of the cycle detector and of the final predicate with the
validated checker on random graphs; rejection of hand-proved-impossible
local configurations and acceptance of hand-allowed ones; and, on random
chord sets, every type discarded by the Lemma~\ref{lem:hamkill} filter
is also killed by the generic monotone test.

\subsection*{A.4\quad The $L=6$ component enumeration
(Theorem~\ref{thm:l6})}

For each of the $74$ dihedral chord-set orbits the script builds the type
list of Theorem~\ref{thm:l6}: singletons and non-swappable $K_2$s in
closed form from Lemma~\ref{lem:6-objects}, and the chord-confined
components by an exhaustive connected growth over the two vertices of the
antipodal pair, pruned at every step by the monotone kills (m1)--(m3) and
closed under the final per-component filters (minimum degree $2$; no cut
vertex, of the component or of $C$, separating it; no internal twins;
connectivity), with canonical deduplication. It then searches all
multisets of types in nondecreasing order, pruning by (m1)--(m3) and by
twin pairs between components, and testing every state on $\ge 9$
vertices against the exact criterion ($2$-connected, no $9$-cycle,
exactly one $9$-path between every nonadjacent pair). Observed counts:
$74$ orbits; at most $87$ component types per orbit; $11{,}597$ growth
nodes in phase 1 and $6{,}236$ multiset nodes in phase 2; component-size
cap ($8$ vertices) hit \textbf{zero} times and multiset-depth cap ($8$
components) hit \textbf{zero} times, so the enumeration is complete
outright; \textbf{zero survivors}; $115$\,s of CPU. Controls: the
$9$-path counter against a permutation brute force ($30$ random
instances); the exact predicate against the validated checker ($40$
random graphs); pruning of hand-killed configurations (two
edge-singletons; a length-$5$ chordal path; an $H_{3,5}$ pendant);
non-pruning of hand-allowed ones (the chord-pair singleton; the
triangle-with-unattached-vertex component of
Theorem~\ref{thm:6-nodeep}, which dies only at the exact test); and the
kill cycle of Lemma~\ref{lem:g2complete}.

\end{document}